\tikzset{main node/.style={ellipse,draw,minimum size=1cm,inner sep=1pt},
            }
            \tikzstyle{every node}=[draw, ellipse, minimum size=1cm,align=center]
\DeclareMathOperator{\lb}{lb}
\DeclareMathOperator{\ls}{ls}
\DeclareMathOperator{\tr}{tr}
\DeclareMathOperator{\rb}{rb}
\DeclareMathOperator{\rs}{rs}
\DeclareMathOperator{\LB}{LB}
\DeclareMathOperator{\LS}{LS}
\DeclareMathOperator{\RB}{RB}
      \theoremstyle{plain}
      \newtheorem{theorem}{Theorem}[section]
      \newtheorem{lemma}[theorem]{Lemma}
      \newtheorem{corollary}[theorem]{Corollary}
      \newtheorem{question}[theorem]{Question}
      \newtheorem{proposition}[theorem]{Proposition}
      \theoremstyle{definition}
      \theoremstyle{remark}
      \theoremstyle{plain}
      \newtheorem{conjecture}{Conjecture}[section]
\newcommand{\tbl}[1]{\textcolor{blue}{}}
\newcommand{\tgr}[1]{\textcolor{green}{}}
\DeclareMathOperator{\Av}{Av}
\def\multiset#1#2{\ensuremath{\left(\kern-.3em\left(\genfrac{}{}{0pt}{}{#1}{#2}\right)\kern-.3em\right)}}
\newcommand{\gauss}[2]{\genfrac{[}{]}{0pt}{}{#1}{#2}}
\newcommand{\N}{\mathbb{N}}
\newcommand{\C}{\mathbb{C}}
\newcommand{\til}{\widetilde}
\newcommand{\frakS}{\mathfrak{S}}
\begin{document}
\author{Robert Dorward\\
\small Department of Mathematics, Oberlin College\footnote{The author now works at Google.},\\[-5pt]
\small Oberlin, OH 44074, USA, {\tt bobbydorward@gmail.com}}

\title{Combinatorial statistics on restricted growth functions containing a pattern exactly $k$ times\footnote{A thesis completed for the honors program in mathematics at Oberlin College. Completed and published internally at Oberlin in 2017.}}
\maketitle

\tableofcontents

\pagebreak
\section{Introduction}

Enumerative combinatorics concerns itself with counting the number of certain combinatorial objects. Bijection to a set of objects whose cardinality is known is a common way to accomplish this goal. It has the added benefit of uncovering connections between previously unrelated objects. 

In this paper, we begin with the field of permutation patterns and answer some related questions about set partitions. In doing so, we find connections to several well-known combinatorial objects, including integer partitions, generating functions, $q$-analogs, and partially ordered sets.

In particular, in Section~\ref{setparts} we focus on pattern avoidance defined for restricted growth functions which are essentially set partitions in disguise. We characterize the cardinalities of two pattern containment classes of restricted growth functions. In Section~\ref{intpartsec}, we examine a few of the generating functions for these pattern containment classes, finding new connections to integer partitions that expand on existing connections in the literature. In Section~\ref{unimsec} we discuss some properties of the generating functions studied including unimodality and symmetry and give a conjecture on the unimodality of one of the generating functions. To do so, we introduce the theory of posets, formulate the conjecture in this framework and give an overview of the proof of a related problem by Proctor, using linear algebra. All results are new unless otherwise stated. We end with a list of open problems and future work.
\subsection{A Brief History of Permutation Patterns}
The field of permutation patterns as a subfield of combinatorics has experienced an explosion of growth over the past several decades, though its roots can be traced to questions in theoretical computer science. A \emph{permutation} of length $n$ is an ordering of the numbers in $[n]=\{1,2,\dots, n\}$, for example 1423 and 53241. The set of permutations of length $n$ is $\frakS_n$, the symmetric group. In 1968, Donald Knuth \cite{knu:art} gave a characterization of the \emph{stack-sortable} permutations. These are permutations (thought of in single-line notation) which can be sorted back into increasing order using a single stack data structure. Knuth found that what mattered about the permutation was the relative ordering of its elements: permutations that were stack-sortable were exactly those which had no three elements in the order \emph{second largest}, \emph{largest}, \emph{smallest}. 

In the language of permutation patterns, we say that a permutation $\sigma$ \emph{contains} another permutation $\pi$ if there is a subpermutation $\sigma'$ of $\sigma$ whose elements are in the same relative order as $\pi$. A permutation $\sigma$ \emph{avoids} $\pi$ if it does not contain $\pi$. For example $\sigma = 15324$ contains $\pi = 1423$ because the subpermutation $\sigma' = 1524$ has elements in the same relative order as $\pi$. We let $\Av_n(\pi) = \{ \sigma \in \frakS_n : \sigma \text{ avoids } \pi\}$

Thus, Knuth showed that the stack-sortable permutations are precisely those which avoid the pattern 231. What is more interesting from a mathematical viewpoint is that he also found that the stack-sortable permutations (and therefore 231-avoiding permutations) are enumerated by the Catalan numbers, a sequence of integers ubiquitous in combinatorics. 

As this idea of pattern avoidance began to be explored, a remarkable discovery was made. The number of permutations avoiding \emph{any} fixed length three permutation is the $n$th Catalan number (i.e., $\#\Av_n(\pi) = C_n$ for any $\pi\in \frakS_n$), even though the set of permutations is different for each fixed length three permutation.

As the subject expanded, different questions about permutation patterns were considered. These include enumerative questions such as finding exact formulas for the cardinalities of different avoidance classes, characterizing the permutation of length $n$ that maximizes the number of copies of the pattern contained, or finding asymptotic formulas for the cardinalities of avoidance classes. The most famous result in the area was proved in 2004 after being open for roughly 25 years. Originally known as the Stanley-Wilf Conjecture, the Marcus-Tardos Theorem states that the growth rate of any avoidance class of permutations is at most exponential in $n$ \cite{mt:stw}. 

After studying permutations, this idea of pattern avoidance was generalized to many different combinatorial structures such as graphs, words, matrices, tableaux and set partitions. In this paper, we focus on avoidance in a particular set of words well known to be in bijection with set partitions.

\section{Set Partitions, Restricted Growth Functions, and Patterns}\label{setparts}
We begin this section by introducing basic concepts and definitions. Then, we will characterize two containment classes and compute their cardinality.

We use $[n]$ to denote the set $\{1,2,\dots, n\}$. 
A \emph{set partition} is a collection of non-empty subsets $B_1,\dots, B_m \subseteq [n]$ such that 
\begin{align*}
1. \hspace{5pt}& \forall i\neq j, B_i \cap B_j = \emptyset,\\
2. \hspace{5pt}& \bigcup_ {i=1}^n B_i  = [n].
\end{align*}
For notational convenience we will adopt the convention of writing our set partition in \emph{block form} $B_1/ B_2/ \dots / B_m$ and will refer to the subsets $B_i$ as \emph{blocks}. We drop the set braces around each subset as well. We let $\Pi_n$ be the set of partitions of $[n]$.

A set partition is said to be in \emph{standard form} if $\min B_1 < \min B_2 < \dots <\min B_m$ and the elements of each $B_i$ are written in ascending order. For example, the set partitions $1/25/34$ and $13/2/467/58$ are set partitions written in standard form.  Standard form is relevant to introduce the main combinatorial object studied in this paper.

A \emph{restricted growth function} (RGF) is a sequence of positive integers $w = w_1\dots w_n$ such that 
\begin{align*}
1. \hspace{5pt}&w_1 = 1,\\
2.  \hspace{5pt}&w_i \leq 1 + \max\{w_1,\dots, w_{i-1}\},
\end{align*}
for $2\leq i \leq n$. RGFs are of interest because they are in bijection with set partitions via the following bijection: Given a set partition $B_1/\dots/B_m\vdash [n]$ in standard form, create the RGF $w_1\dots w_n$ where $w_i = j$ if $i \in B_j$. The set partitions above correspond to RGFs $12332$ and $12134334$. Because our set partition is in standard form, we have $w_1=1$ and the growth condition holds. We let $R_n$ be the set of restricted growth functions of length $n$.

We say that an RGF $w$ \emph{contains} another RGF $v$ if there is a subword $w'$ of $w$ such that the elements of $w$ are in the same relative order as $v$. We refer to $v$ as the \emph{pattern}. For example $w= 12231245$ contains the pattern $v= 11123$ because the elements of the subword $w'=22245$ of $w$ are in the same relative order as $11123$. We let $R_{n,k}(v)$ be the set of restricted growth functions of length $n$ that contain exactly $k$ copies of $v$. Continuing our example, we have $1221245 \in R_{7,1}(11123)$.

Historically, the case most frequently considered is $k=0$. If $w$ does not contain $v$ we say that $w$ \emph{avoids} $v$. Sagan \cite{sag:pas} determined $\#R_{n,0}(v)$ for all $v$ of length 3 and for the most part found that they have simple and elegant formulas. For example, $$\#R_{n,0}(121) = 2^{n-1}$$ and $$\#R_{n,0}(111) = \sum_{i\geq 0} \binom{n}{2i}(2i)!!,$$ where $$(2i)!! = (2i-1)(2i-3)(2i-5)\cdots (3)(1).$$

Two patterns $w$ and $v$ are called \emph{Wilf equivalent} if $\#R_{n,0}(w)=\#R_{n,0}(v)$ for all $n\geq0$. Sagan showed that the patterns 112, 122, 121, and 123 are all Wilf equivalent. This motivates a more general definition of Wilf equivalence: for a fixed $k\geq 0$, we say that $w$ and $v$ are $k$-Wilf equivalent if $\#R_{n,k}(w) = \#R_{n,k}(v)$ for all $n\geq 0$. In \cite{man:set} (Research Direction 6.6) Mansour gives a list of suspected $1$-Wilf equivalences for patterns of length at most 5, though offers no proof for them. We will prove one of these $1$-Wilf equivalences and then show that in fact, a stronger relationship holds.

\subsection{Characterizations of $R_{n,1}(112)$ and $R_{n,1}(122)$}
We will focus on $R_{n,1}(112)$ and $R_{n,1}(122)$ as we will see later that they have interesting properties, first giving characterizations of their elements. In doing so, we also show that 
$$\#R_{n,1}(112) = \#R_{n,1}(122) = (n-2)2^{n-3},$$
which were stated in \cite{man:set} without proof.

Note that by definition it must be the case that $\#R_{n,1}(v) =0$ when $n$ is less than the length of $v$. Thus, in this section, all formulas will hold only for $n\geq 3$.

We begin by characterizing $R_{n,1}(112)$. In order to achieve this, we will need to know the characterization of $R_{n,0}(112)$ given by Sagan, stated here without proof.
\begin{lemma}[\cite{sag:pas}]\label{sagan112}
We can characterize the elements of $R_{n,0}(112)$ as 
$$123\dots m w',$$
where $w'$ is weakly decreasing. In addition we have that 
$$\# R_{n,0}(112) = 2^{n-1}.$$
\end{lemma}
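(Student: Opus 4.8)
The plan is to establish the displayed characterization by a two-way inclusion and then read off the cardinality by a stars-and-bars count followed by the binomial theorem.

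For the direction that every word of the form $123\dots m\, w'$ with $w'$ weakly decreasing avoids $112$, I would argue by contradiction. Suppose positions $i < j < k$ satisfy $w_i = w_j < w_k$. The value $w_i$ occurs exactly once among the strictly increasing entries $123\dots m$, namely at position $w_i$, so the two equal entries $w_i$ and $w_j$ cannot both come from that prefix; hence $j$ lies in the weakly decreasing tail $w'$. But then $k > j$ lies in $w'$ as well, so $w_k \le w_j$, contradicting $w_k > w_j$. (One should also check that such a word is a genuine RGF: the growth condition is automatic on the prefix, and on the tail it follows from the entries being at most $m$, which is in any case forced if $m$ is to be the maximum.)

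For the converse, let $w \in R_{n,0}(112)$ with largest entry $m$. The crucial point is that the value $m$ first occurs at position $m$. Indeed, in an RGF the values $1,\dots,m-1$ all occur before $m$ does; if $m$ first appears at position $p$, the $p-1$ entries preceding it all lie in $\{1,\dots,m-1\}$, so if $p>m$ then two of them are equal, say at positions $i<j<p$, and $(i,j,p)$ is a copy of $112$. Hence $p=m$, and then the $m-1$ entries $w_1\dots w_{m-1}$ realize each of $1,\dots,m-1$ exactly once; combined with the growth condition this forces $w_1\dots w_m = 123\dots m$. Finally, if the tail were not weakly decreasing, say $w_j < w_{j+1}$ with $j \ge m+1$, then $w_j \le m-1$, and since position $w_j$ carries the value $w_j$, the triple $(w_j,j,j+1)$ is a copy of $112$, a contradiction. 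So $w = 123\dots m\, w'$ with $w'$ weakly decreasing, as claimed.

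With the characterization in hand the count is routine: for each fixed $m \in \{1,\dots,n\}$ the tail $w'$ ranges over all weakly decreasing words of length $n-m$ on the alphabet $\{1,\dots,m\}$, of which there are $\binom{n-1}{m-1}$ by stars and bars, and distinct values of $m$ give distinct words since $m$ is recovered as the maximum; hence $\#R_{n,0}(112) = \sum_{m=1}^{n}\binom{n-1}{m-1} = 2^{n-1}$. The one genuinely non-mechanical step, and hence the main obstacle, is locating the first occurrence of the maximum via the pigeonhole argument above; once the prefix is known to be $123\dots m$, the weak monotonicity of the tail and the enumeration are both just bookkeeping with the RGF growth condition.
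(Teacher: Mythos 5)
Your proof is correct. Note, however, that the paper does not actually prove this lemma: it is quoted from Sagan's paper \cite{sag:pas} and explicitly ``stated here without proof,'' so there is no in-paper argument to compare yours against. Your two-way inclusion is sound: the pigeonhole argument locating the first occurrence of the maximum $m$ at position $m$ is exactly the right key step, and the count $\sum_{m=1}^{n}\binom{n-1}{m-1}=2^{n-1}$ via multisets is correct. Two small points worth making explicit if you write this up: first, when you conclude $p=m$ you use $p\le m$ from the pigeonhole but should also record $p\ge m$, which follows because all of $1,\dots,m-1$ must occur among the first $p-1$ entries; second, the decomposition $123\dots m\,w'$ is only unique once you insist that $m$ is the maximum of the whole word (equivalently, that the entries of $w'$ lie in $\{1,\dots,m\}$), which your counting argument does correctly but the bare statement of the characterization leaves implicit.
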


Moving from $k=0$ to $k=1$, we find that a similar characterization holds. We say that a word $w$ contains an \emph{noninversion} at index $i$ if there exists $j>i$ such that $w_i < w_j$. In addition we will often refer to the \emph{initial run} of a restricted growth function $w$, which is the longest initial strictly increasing subword of $w$. For example, the initial run of $1234555123$ is $12345$.
\begin{lemma}\label{lem112card}
We can characterize the elements of $R_{n,1}(112)$ as
$$123\dots (m-1) w',$$
where $w'$ contains the maximum, $m$, and is a sequence with exactly one noninversion.
\end{lemma}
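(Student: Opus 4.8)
The plan is to mimic the structure of Sagan's characterization of $R_{n,0}(112)$ (Lemma~\ref{sagan112}) and track what happens when we permit exactly one copy of the pattern $112$. First I would recall that a copy of $112$ in a word $w$ is a triple of positions $i<j<\l$ with $w_i=w_j<w_\l$. I would argue that in an RGF $w$ with exactly one such copy, the ``repeated value'' part and the ``ascent'' part are tightly constrained: if $w$ had two distinct positions $i<j$ with $w_i=w_j$ and some later position with a strictly larger value, that would already give at least two copies of $112$ (one can slide either the right endpoint or, if the equal pair can be chosen in more than one way, the equal pair). So the single copy of $112$ must come from an essentially unique configuration, and everything before the ``interesting'' part of $w$ must look like an initial run $123\dots(m-1)$ with no repeats, exactly as in the avoidance case.

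The key steps, in order, are as follows. (1) Show that the initial run of any $w\in R_{n,1}(112)$ is $123\dots(m-1)$ for some $m$, i.e.\ the first $m-1$ letters are strictly increasing and equal to $1,2,\dots,m-1$; this follows because a repeat among early letters followed by any strictly larger letter later — and such a letter exists since the maximum value $m$ must appear and $w$ is an RGF of length $\ge 3$ — produces a $112$, and two such repeats or a repeat with two larger letters to its right produce two copies. (2) Write $w = 123\dots(m-1)\,w'$ and show $w'$ must contain the maximum value $m$: if $\max w = m$ and $m$ appeared among the first $m-1$ letters, the initial run would have length $\ge m$, contradiction; hence $m$ first appears in $w'$ (note $w'$ could also be where the value $m-1$ is attained, but the point is the global max $m$ lies in $w'$). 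Actually more carefully: the initial run has length $m-1 < $ (length of $w$), and the letter immediately after the initial run is $\le$ the current max, so the value $m$ is attained for the first time somewhere in $w'$. (3) Show $w'$ has exactly one noninversion. Here a noninversion of the whole word $w$ at an index $i<m-1$ inside the prefix would need a later strictly larger letter; combined with any equal-or-smaller repeat this is where copies of $112$ get counted. I would set up a bijection between copies of $112$ in $w$ and noninversions in $w'$ (together with the forced equalities coming from the prefix being $1,2,\dots,m-1$ and reappearing — each value $1,\dots,m-1$ of $w'$ matched against its twin in the prefix), and show this count equals exactly the number of noninversions of $w'$, so ``exactly one copy of $112$'' translates to ``exactly one noninversion in $w'$.'' (4) Conversely, check that any word of the stated form is a legitimate RGF (the prefix $123\dots(m-1)$ forces the growth condition, and $w'$ containing $m$ but being otherwise a single-noninversion sequence over $[m]$ keeps the running maximum from jumping) and contains exactly one copy of $112$.

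The main obstacle I anticipate is step (3): carefully pinning down the correspondence between copies of the pattern $112$ and noninversions of $w'$ without miscounting. The subtlety is that a copy of $112$ in $w$ can use two equal letters both lying in the prefix, or one in the prefix and one in $w'$, or both in $w'$, and the ``larger'' third letter can be in $w'$ as well; I need to show all of these possibilities collapse to a single clean invariant — the number of noninversions of $w'$ — and in particular that having the initial run be the full staircase $1,2,\dots,m-1$ (rather than something shorter with an internal repeat) is forced precisely because any shortfall introduces extra, uncontrolled copies. Getting the edge cases right — what happens when $m$ is small, when $w'$ begins with a value already seen, and when the unique noninversion in $w'$ involves the letter $m$ itself — will require care, but the counting should come out to the claimed bijective correspondence.
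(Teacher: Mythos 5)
Your plan follows the same route as the paper's proof: an early repeat would have both $m-1$ and $m$ strictly to its right (hence two copies of $112$), which forces the staircase prefix $12\dots(m-1)$, and then each noninversion in $w'$ pairs with the unique twin of its smaller value sitting in that prefix to produce a copy of $112$, so ``exactly one copy'' translates to ``exactly one noninversion.'' The correspondence you flag as the main obstacle in your step (3) is precisely this twin-in-the-prefix observation, which the paper dispatches in two sentences (two noninversions $a_1b_1$, $a_2b_2$ yield two copies $c_1a_1b_1$, $c_2a_2b_2$), so your outline is essentially the paper's argument.
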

\begin{proof}
Let $w\in R_{n,1}(112)$ and let $abc$ be the subword of $w$ that is the copy of $112$ in $w$. Let $i$ be the index of $b$ and let $m$ be the maximum of $w$. It must be the case that $i>(m-1)$, or $w$ would contain more than one copy of 112. This is because $i\leq (m-1)$ implies that the elements $(m-1)$ and $m$ appear after $b$, which creates the copies of 112, $ab(m-1)$ and $abm$. This also implies that $w$ contains an initial run up at least up to $(m-1)$ (and possibly up to $m$). 

Now suppose, seeking a contradiction, that there were two noninversions in which both elements of each noninversion appeared after the initial run. Label them $a_1b_1$ and $a_2b_2$. Because we fixed the max of $w$ to be $m$ and $a_1,a_2< m$, it must be the case that another copy of $a_1$ and $a_2$  (which we will label $c_1$ and $c_2$, respectively) appear in the initial run of $w$. However, this creates two copies of 112: $c_1a_1b_1$ and $c_2a_2b_2$, which implies that $w\not \in R_{n,1}(112)$. Thus, we choose our one noninversion and then fill the rest of the word in with a weakly decreasing sequence.
\end{proof}
Now that we have a characterization of $R_{n,1}(112)$ we can determine its cardinality.
\begin{theorem}\label{112card}
We have
$$\#R_{n,1}(112) = (n-2)2^{n-3}.$$
\end{theorem}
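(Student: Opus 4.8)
The plan is to count the words described by Lemma~\ref{lem112card} directly by classifying them according to how the noninversion sits relative to the maximum. By that lemma, an element of $R_{n,1}(112)$ has the form $123\dots(m-1)w'$, where $w'$ is a word on $\{1,2,\dots,m\}$ that contains the letter $m$, and which, as a sequence, has exactly one noninversion (a pair $a_\ell b_\ell$ with $\ell$ the index of $a$, some later entry strictly larger). The total length of $w'$ is $n-(m-1)$. First I would fix $m$ and count the valid tails $w'$ of length $t := n-m+1$, then sum over the admissible range of $m$.

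Next I would organize the count of tails. Since $w'$ has exactly one noninversion, $w'$ is ``almost weakly decreasing'': there is exactly one position where a strict ascent is witnessed. I expect the cleanest bookkeeping is to observe that a sequence with exactly one noninversion is obtained by taking a weakly decreasing sequence and performing a single controlled local modification, or equivalently by choosing which entry is the ``out of place'' small value $a_\ell$ and where it goes. A useful reduction: $w'$ must begin with $m$ (if the first letter of $w'$ were $a<m$, then since $m$ appears later we would get a noninversion at position $1$, and then the rest of $w'$ must be weakly decreasing with no further ascent — this is one family; the other family has $w'$ starting at $m$ and the single noninversion occurring strictly inside). Splitting into these cases, each reduces to: choose the location of the unique ascent among the $t-1$ gaps, choose the two values involved, and fill the remaining positions weakly decreasingly subject to the constraint that no \emph{second} noninversion is created. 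I would compute each case as a sum of binomial coefficients and simplify.

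The arithmetic heart of the argument is then to show the double sum collapses to $(n-2)2^{n-3}$. I anticipate that after fixing $m$, the number of valid tails is a polynomial-times-power-of-two expression in $t$ (something like $c\cdot t\cdot 2^{t-2}$ or a small linear combination of $2^{t}$, $t2^{t}$), and summing over $m$ from $3$ to $n$ (equivalently $t$ from $1$ to $n-2$) via the identity $\sum_{t} t 2^{t}$-type telescoping gives the claimed closed form. I would double-check the small cases $n=3,4,5$ by hand against $(n-2)2^{n-3} = 1, 4, 12$ to make sure the edge conventions (the tail must contain $m$; $m$ could coincide with the end of the initial run) are handled correctly.

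The main obstacle I foresee is not the summation but pinning down exactly which tails $w'$ are legal: the condition is a global one (``exactly one noninversion in $w'$''), and naive constructions can accidentally create a second noninversion — for instance if the single designated ascent uses a value $a$ that also reappears later above another later-larger entry. So the careful step is to prove a clean structural normal form for ``sequence on $\{1,\dots,m\}$ containing $m$ with exactly one noninversion,'' ideally showing such a sequence is precisely a weakly decreasing run, then a single strictly larger jump, then weakly decreasing again with every entry after the jump no larger than the top of the jump — and verifying that this description is both necessary and sufficient, so that the count is a clean product/sum of binomials. Once that normal form is nailed down, the enumeration and the final simplification to $(n-2)2^{n-3}$ should be routine.
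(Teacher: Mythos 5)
Your route is genuinely different from the paper's. You propose to enumerate the words of Lemma~\ref{lem112card} directly, by fixing the maximum $m$ and classifying the tails $w'$ with exactly one noninversion. The paper instead deletes the middle letter $b$ of the unique $112$-copy to land in $R_{n-1,0}(112)$, observes that for a fixed avoider with maximum $m$ each of the $m-1$ choices of $b$ has exactly one legal reinsertion point, and so reduces everything to $\sum_{m=2}^{n-1}(m-1)\,\#R_{n-1,0,m}(112)=\sum_{m}(m-1)\binom{n-2}{n-m-1}=(n-2)2^{n-3}$, where the avoidance counts come for free from Sagan's characterization (a multiset-coefficient count). The deletion/insertion step is exactly what spares the paper from ever having to describe ``sequences with exactly one noninversion'' explicitly, which is the part you correctly identify as the crux of your version.

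That crux is also where your proposal currently has a concrete error. The normal form you state --- weakly decreasing, one strictly larger jump, then weakly decreasing ``with every entry after the jump no larger than the top of the jump'' --- is too permissive. If the jump is $w'_\ell < w'_{\ell+1}$ and some later entry $w'_k$ satisfies $w'_\ell < w'_k \le w'_{\ell+1}$, then $(\ell,k)$ is a second noninversion pair (equivalently, a second copy of $112$ in $w$). For example the tail $132$ fits your description but $12132$ contains two copies of $112$. The correct structure is: the unique ascent must occur between \emph{adjacent} positions $\ell,\ell+1$ (otherwise any intermediate entry creates a second noninversion with either $w'_\ell$ or $w'_{\ell+1}$), and one needs
$$w'_1\ge\cdots\ge w'_{\ell-1}\ge w'_{\ell+1}>w'_\ell\ge w'_{\ell+2}\ge\cdots,$$
i.e.\ everything after the jump is bounded by the \emph{bottom} of the jump. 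With that corrected normal form your enumeration should go through (and, after the dust settles, it reproduces the same sum $\sum_k (n-2-k)\binom{n-2}{k}=(n-2)2^{n-3}$ as the paper), but as written the count would be an overcount. If you want to keep your approach, prove the displayed normal form as a lemma first; otherwise the paper's delete-and-reinsert argument is the cleaner path.
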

\begin{proof}
Let $w\in R_{n,1}(112)$ and let $abc$ be the copy of $112$ in $w$. By Lemma \ref{lem112card}, we know that if we delete $b$ from $w$ then we get a word $w'\in R_{n-1,0}(112)$. Thus we can obtain $\#R_{n,1}(112)$ by counting how many different ways we can insert some $b$ into a given $w'\in R_{n-1,0}(112)$. 

To do this, fix a word $w'\in R_{n-1,0}(112)$ and let $m$ be the maximum of $w'$. Next, let $b\in\{1,\dots,m-1\}$. By \ref{lem112card} we know that $b$ must be placed after the initial run $12\dots (m-1)$ and in order to produce exactly one copy of $112$, it must be placed before exactly one larger element. Let $c' = \min\{w_i: i>(m-1) \text{ and } w_i> b\}$. Note that $c'$ is not necessarily unique. Thus, in order to create exactly one copy of $112$, we insert $b$ to the left of the right-most copy of $c'$. Next, let $R_{n,k,m}(112) = \{w\in R_{n,k}(112)\text{ with max } m\}$ and we have

$$\#R_{n,1}(112) = \sum_{m=2}^{n-1}(m-1)\#R_{n-1,0,m}(112).$$

All that remains is to compute $\#R_{n-1,0,m}(112)$. However, this is easily done as the characterization that Sagan provides in \cite{sag:pas} for $R_{n,0}(112)$ allows us to see that the problem reduces to picking a multisubset of $n-m-1$ elements from a set of $m$ elements. We now apply the multiset formula which says that the number of multisubsets of size $k$ of an $n$ element set is $\binom{n+k-1}{k}$. Therefore we have

$$\sum_{m=2}^{n-1}(m-1)\#R_{n-1,0,m}(112) = \sum_{m=2}^{n-1}(m-1)\binom{n-2}{n-m-1}.$$
After applying some algebra and well known binomial identities, we arrive at
$$\#R_{n,1}(112) = (n-2)2^{n-3},$$
which completes the proof.
\end{proof}
Next, we give a characterization of $R_{n,1}(122)$ and determine its cardinality.
\begin{lemma}\label{122lemma}
We have that
$$R_{n,1}(122) = \{ w\in R_n : w = 12v, \text{ where $2\in v$ and if $a>1$ then $\#a(v) \leq 1$}\}.$$
\end{lemma}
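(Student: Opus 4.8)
The plan is to prove the two inclusions of the claimed set equality separately, in each case by pinning down exactly which triples of positions of an RGF form a copy of $122$. The structural facts I will lean on are: a copy of $122$ in $w$ is a triple $i<j<k$ with $w_i<w_j=w_k$, so the repeated value $w_j=w_k$ is always at least $2$ and the value $1$ can never play that role; and since $w_1=1$ and an RGF introduces the values $1,2,3,\dots$ in that order, index $1$ lies to the left of every repeated occurrence of a value $a\ge 2$, while some index carrying a $2$ lies to the left of every repeated occurrence of a value $a\ge 3$.

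For the forward inclusion, let $w\in R_{n,1}(122)$. First I would rule out any repeated value $a\ge 3$: if such an $a$ occurred at positions $p_1<p_2$ then $p_1\ge 3$ (because $w_1=1$ and $w_2\in\{1,2\}$), so $(1,p_1,p_2)$ and $(2,p_1,p_2)$ would be two distinct copies of $122$, a contradiction. Hence every copy of $122$ in $w$ has $w_j=w_k=2$. Writing $q_1<\dots<q_m$ for the positions of the value $2$ in $w$, the number of copies of $122$ equals $\sum_{1\le s<t\le m}\#\{\,i<q_s:w_i=1\,\}$, and each summand is at least $1$ because index $1$ qualifies ($w_1=1$ and $1<q_1\le q_s$); so the count is at least $\binom{m}{2}$. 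Since $w$ contains a copy, the value $2$ occurs at least twice, so $m\ge 2$; and $\binom{m}{2}\le 1$ forces $m\le 2$; hence $m=2$. With $m=2$ the count collapses to $\#\{\,i<q_1:w_i=1\,\}=1$, which forces $q_1=2$: if instead $q_1\ge 3$, then index $2$ lies strictly before $q_1$ and so cannot carry the value $1$ (that would be a second qualifying index besides index $1$), forcing $w_2=2$ and contradicting that $q_1$ is the first occurrence of $2$. So $w=12v$, the value $2$ occurs exactly once more and does so inside $v$, and no value $a\ge 3$ is repeated; since $2\in v$ together with $\#2(v)\le 1$ says exactly $\#2(v)=1$, this is precisely the asserted description of $w$.

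For the reverse inclusion, take $w=12v\in R_n$ with $2\in v$ and $\#a(v)\le 1$ for all $a>1$. Then $2$ occurs exactly twice in $w$ (at position $2$ and at one spot in $v$), every value $a\ge 3$ occurs at most once in $w$, and $1$ cannot be the repeated value of any copy; so a copy of $122$ must use the two occurrences of $2$ as its equal entries, and the only index lying to their left with a strictly smaller entry is index $1$. Hence $w$ contains exactly one copy of $122$, i.e.\ $w\in R_{n,1}(122)$.

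I do not anticipate a real obstacle; the delicate points are all in the forward inclusion, namely excluding the case $w_2=1$ and confirming that $\#\{\,i<q_1:w_i=1\,\}$ is forced to be exactly $1$ rather than merely at most $1$. A cleaner way to package everything would be to first record the observation that in an RGF a copy of $122$ consists of a pair of equal entries $w_j=w_k$ with $j<k$ together with an entry equal to $1$ strictly to their left, and that if $w_j=w_k\ge 3$ there are automatically at least two copies; both inclusions then follow quickly, and I would most likely organize the write-up that way.
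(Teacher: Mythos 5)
Your proof is correct and follows essentially the same route as the paper's: rule out any repeated value $a\ge 3$ by exhibiting the two witnesses at indices $1$ and $2$, deduce that the equal entries of the unique copy must both be $2$ and that $2$ occurs exactly twice with the second letter of $w$ equal to $2$, and for the converse observe that any copy must use the two occurrences of $2$ with only index $1$ available as the smaller element. Your write-up is simply more quantitative (the explicit count $\sum_{1\le s<t\le m}\#\{i<q_s:w_i=1\}$) than the paper's terser version of the same argument.
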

\begin{proof}
Suppose $w\in R_{n,1}(122)$. Then there must exist some subword $abc$ representing the copy of 122 in $w$. We can see that $b=c=2$ because otherwise both the first 1 and first 2 in the word would serve as representatives for $a$ and $w$ would contain two copies. We can also see that $w$ must start $12$ for similar reasons. It is also clear that repeating another element larger than 1 would introduce a second copy of 122. Thus we have shown that the LHS is contained in the RHS.

For the other direction, let $w$ be a member of the set on the RHS. By construction, there is a copy of 122 in $w$. Suppose, seeking a contradiction, that $abc$ was another copy. Because $w$ begins $12$, we can see that $b \neq 2$ because there is only one element to the left of the first 2 and the second 2 has no 2 to its left. Thus $b>2$. However, elements greater than $2$ are distinct in $w$ and thus $b\neq c$ which is a contradiction. \end{proof}

\begin{theorem}\label{122card}
We have that 
$$\# R_{n,1}(122) = (n-2)2^{n-3}.$$
\end{theorem}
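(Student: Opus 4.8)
The plan is to count directly using the characterization in Lemma~\ref{122lemma}, stratifying by the maximum value $m = \max w$. Write $w = 12v$ with $|v| = n-2$. The conditions of Lemma~\ref{122lemma} force $\#a(v)\le 1$ for every $a>1$ while allowing arbitrarily many $1$'s, and since $12v$ must be an RGF its set of values is exactly $\{1,\dots,m\}$; hence $v$ consists of one copy each of $2,3,\dots,m$ together with some number of $1$'s. A length count pins that number of $1$'s down as $n-m-1$, so $m$ ranges over $2\le m\le n-1$, the lower bound coming from the requirement $2\in v$ and the upper bound from needing $n-m-1\ge 0$.

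Next I would argue that, for fixed $m$, the RGF growth condition on $12v$ is equivalent to requiring that the letters $3,4,\dots,m$ occur in $v$ in this increasing order, with the single $2$ and the $1$'s placed arbitrarily: each of $3,\dots,m$ occurs exactly once, so the first time $j\in\{3,\dots,m\}$ is read, $j-1$ must already have appeared; conversely, any placement respecting the increasing order of $3,\dots,m$ gives a valid RGF $12v$ satisfying all the conditions of Lemma~\ref{122lemma}, and (again by Lemma~\ref{122lemma}) no second copy of $122$ is created. Counting such $v$ is then a two-step selection: choose the $m-2$ positions for the block $3,4,\dots,m$ in $\binom{n-2}{m-2}$ ways, then choose which of the remaining $n-m$ positions holds the $2$ in $n-m$ ways (the rest being $1$'s). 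Writing $R_{n,k,m}(122)$ for the elements with maximum $m$, this yields
$$\#R_{n,1}(122) = \sum_{m=2}^{n-1}(n-m)\binom{n-2}{m-2}.$$

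Finally I would evaluate this sum. Substituting $N = n-2$ and reversing the order of summation turns it into $\sum_{k=0}^{N} k\binom{N}{k} = N\,2^{N-1}$, giving $\#R_{n,1}(122) = (n-2)2^{n-3}$, as claimed.

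The main obstacle is the structural step, not the arithmetic: one must verify carefully that the only constraint the RGF condition imposes among the ``large'' letters $3,\dots,m$ is that they appear in increasing order, and that every legal arrangement is genuinely free of a second $122$ — both follow from Lemma~\ref{122lemma} but deserve to be spelled out. An alternative route, parallel to the proof of Theorem~\ref{112card}, would delete the lone $2$ of $v$ to land in $R_{n-1,0}(122)$ and count insertions; this would first require recording a characterization of $R_{n-1,0}(122)$, which is why the direct stratified count seems preferable here.
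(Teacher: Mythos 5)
Your proof is correct, and it rests on the same structural input as the paper's proof, namely the characterization in Lemma~\ref{122lemma}, but the bookkeeping is different. The paper counts directly without stratifying: by Lemma~\ref{122lemma} and the RGF growth condition, each letter of $w'$ (the part after the initial $12$) is either the unique $2$, a $1$, or a new maximum $\max\{w_1,\dots,w_{i-1}\}+1$; choosing which of the $n-2$ positions carries the $2$ and then making an independent binary choice (a $1$ or a new maximum) at each of the remaining $n-3$ positions gives $(n-2)2^{n-3}$ with no summation and no binomial identity. Your stratification by the maximum $m$ is an expansion of that product: the positions you assign to the increasing block $3,4,\dots,m$ are exactly the positions at which the paper places a new maximum, so your $(n-m)\binom{n-2}{m-2}$ is the refined count of words with maximum $m$, and your final sum is just the binomial-theorem expansion of $(n-2)2^{n-3}$. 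What your route buys is this finer count by maximum (parallel in spirit to the paper's proof of Theorem~\ref{112card}); what it costs is the extra verification that the only constraint among the letters $3,\dots,m$ is their increasing order --- a point you correctly flag and do justify, since each such letter occurs exactly once and can first appear only after its predecessor --- together with the closing identity $\sum_{k} k\binom{N}{k}=N2^{N-1}$. Both arguments are sound; the paper's is shorter.
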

\begin{proof}
Let $w\in R_{n,1}(122)$. Then by Lemma \ref{122lemma} it must be the case that $w$ is of the form $12w'$, where $2\in w'$ and every element greater than 2 appears in $w'$ at most once. Because $w$ is an RGF, this implies that $w_i \in \{1,2,\max\{w_1,\dots, w_{i-1}\}+1\}$. In $w'$, there are $n-2$ indices in which the 2 could appear. In the remaining $n-3$ indices, we can either choose to place a 1, or place $\max\{w_1,\dots, w_{i-1}\}+1$. 
\end{proof}

Recall that two patterns $v$ and $w$ are $k$-Wilf equivalent if $R_{n,k}(v) = R_{n,k}(w)$ for all $n\geq 0$. Sagan showed that $\#R_{n,0}(122) = \#R_{n,0}(112) = \#R_{n,0}(121) = \# R_{n,0}(123) = 2^{n-1}$ and thus these patterns are all $0$-Wilf equivalent. We immediately obtain from Theorems~\ref{112card} and \ref{122card} the following corollary. 
\begin{corollary}
The patterns 112 and 122 are $1$-Wilf equivalent.
\end{corollary}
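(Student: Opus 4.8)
The plan is to read the corollary straight off the two cardinality formulas already in hand. By definition, the patterns $112$ and $122$ are $1$-Wilf equivalent exactly when $\#R_{n,1}(112) = \#R_{n,1}(122)$ for every $n \geq 0$, so all that is needed is to compare the two counts for each $n$.

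First I would dispose of the small cases. Since each of $112$ and $122$ has length $3$, the remark preceding Theorem~\ref{112card} gives $\#R_{n,1}(112) = \#R_{n,1}(122) = 0$ whenever $n < 3$, so both sides agree (vacuously) in that range. Then for $n \geq 3$, Theorem~\ref{112card} gives $\#R_{n,1}(112) = (n-2)2^{n-3}$ and Theorem~\ref{122card} gives $\#R_{n,1}(122) = (n-2)2^{n-3}$; these expressions are literally identical, so the two counts coincide. Combining the two ranges of $n$ yields $\#R_{n,1}(112) = \#R_{n,1}(122)$ for all $n \geq 0$, which is precisely the assertion of the corollary.

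Because the argument is purely a matter of matching closed forms, there is no substantive obstacle here; the only point that warrants a moment's care is that Theorems~\ref{112card} and \ref{122card} are stated only for $n \geq 3$, which is why the small-case check is needed to cover all $n \geq 0$. A more informative route — and presumably the one foreshadowed by the remark that ``a stronger relationship holds'' — would be to construct an explicit bijection between $R_{n,1}(112)$ and $R_{n,1}(122)$ directly from the structural descriptions in Lemmas~\ref{lem112card} and \ref{122lemma}. But for the equinumerosity claimed in the corollary, the comparison of formulas already suffices.
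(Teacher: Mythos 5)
Your proof is correct and matches the paper's approach exactly: the corollary is stated there as an immediate consequence of Theorems~\ref{112card} and~\ref{122card}, both of which give the count $(n-2)2^{n-3}$. Your extra care with the $n<3$ cases is a harmless and slightly more thorough touch than the paper, which leaves that implicit.
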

Through numerical computations we find that of the above patterns, 112 and 122 are the only two that are $1$-Wilf equivalent and the two are not $2$-Wilf equivalent. This suggests that $k$-Wilf equivalence becomes a stronger property as $k$ increases.

\section{Connections to Integer Partitions: 112 and 122}\label{intpartsec}

In this section, we first give basic definitions and introduce the important concepts of combinatorial statistics, generating functions and $q$-analogs. We then define and characterize the generating functions $\LS_{n,1}(112)$ and $\RB_{n,1}(122)$ and show their relation to integer partitions with distinct parts. Lastly, we characterize $\LB_{n,1}(122)$ and show its relation to integer partitions with restrictions on length and part size.

\subsection{Combinatorial Statistics, Generating Functions and $q$-analogs}
A combinatorial statistic on a set $A$ is a map $st: A\rightarrow \N$.
We now define the four combinatorial statistics of Wachs and White \cite{ww:pqs}: $\lb, \ls, \rb, \rs$, standing for ``left bigger," ``left smaller," ``right bigger," and ``right smaller," respectively. We will give the definition for $\lb$ from which the definition of the other three statistics should be clear. Given an RGF $w=w_1\dots w_n$ we define $\lb(w_i)$ as the number of \emph{unique} elements to its left that are bigger than it. In other words $\lb(w_i)= \#\{ w_j: i > j \text{ and } w_i < w_j\}$. Then we define
$$\lb(w) = \sum_{i= 1}^n \lb(w_i).$$ For example $$\lb(12231421) = 0+0+0+0+2+0+2+ 3 = 7.$$

A \emph{$q$-analog} of a theorem or combinatorial object is a theorem or combinatorial object in the formal variable $q$ for which the original theorem or object is recovered when we take the limit as $q\rightarrow 1$. For example, we can define $$[n]_q = 1+q+q^2+q^3+\dots+q^{n-1}$$ to be a $q$-analog of the number $n$. Usually one studies $q$-analogs that arise naturally from combinatorial statistics. The most well-known $q$-analog is given by the Binomial Theorem. It says that the binomial coefficients are related to the following $q$-analog of $2^n$
$$(1+q)^n = \sum_{k=0}^n \binom{n}{k}q^k,$$
which arises from considering the combinatorial statistic $\#S$ on subsets $S\subseteq [n]$. 

In 1988, Wachs and White \cite{ww:pqs} studied the distribution of the above statistics on $RG(n,k)$, the set of RGFs of length $n$ and maximum $k$. They show that the statistics lead to $q$-analogs of the Stirling numbers of the second kind
$$\sum_{w\in RG(n,k)} q^{\lb(w)}p^{\ls(w)} = \sum_{w\in RG(n,k)} q^{\rs(w)}p^{\rb(w)} = S_{p,q}(n,k),$$
where the polynomials $S_{p,q}(n,k)$ follow a two variable $q$-analog of the usual recursion of the Stirling numbers of the second kind,
$$S_{p,q}(n,k) =\begin{cases}
p^{k-1}S_{p,q}(n-1,k-1)+[k]_{p,q}S_{p,q}(n-1,k) &\text{if }0\leq k\leq n\\
1 &\text{if }n=k=0;\\
0 &\text{otherwise,}
\end{cases}$$
where $[k]_{p,q} = p^{k-1}+p^{k-2}q+p^{k-3}q^{2}+\dots + pq^{k-2}+q^{k-1}$ is a two variable $q$-analog of $k$. This symmetry of this result is surprising, given that RGFs are asymmetric.

The single variable $S_{p,1}(n,k)$, introduced in 1961 by Gould \cite{go:qstir},  are interesting in their own right, having many combinatorial interpretations. In addition, $S_{p,1}(n,k)$ are the moments of the $q$-Charlier polynomials, a family of orthogonal polynomials. In 1995, de M\'edicis, et. al. \cite{dmsw:qc} gave a combinatorial proof of this fact utilizing the $\rs$ statistic.

Combining these statistics with pattern containment, we construct generating functions $$\LB_{n,k}(v) = \sum_{w\in R_{n,k}(v)} q^{\lb(w)},$$
which will be the primary objects of study in this paper. The key idea is that the important information is contained in the coefficients and exponents of the polynomial as the coefficient of $q^t$ will be $\#\{w\in R_{n,k}(v) : \lb(w) = t\}$. Campbell, et. al. \cite{ddggprs:rgf} and Goyt and Sagan \cite{gs:sps} studied these generating functions in the case when $k=0$. In \cite{man:set}, Mansour suggests the following research direction: What happens when $k>0$? This article seeks to investigate this question.

\subsection{Partitions with distinct parts}\label{sectdistparts}
The remainder of this section will be devoted to characterizing three of the generating functions for patterns 112 and 122. It will turn out that these are intimately related to integer partitions and so we will first give some preliminary definitions of integer partitions.

An integer partition $\lambda$ of a natural number $n$ is a weakly decreasing sequence of positive integers, called \emph{parts}, $(\lambda_1,\lambda_2,\dots,\lambda_t)$ such that 
$$|\lambda| :=\sum_{i=1}^t \lambda_i = n.$$ 

If $\lambda$ partitions $n$, we denote this as $\lambda\vdash n$. The \emph{length} of a partition $\ell(\lambda)$ is the number of parts, i.e. $\ell(\lambda) = t$. Though distinct from set partitions, introduced earlier, we will usually refer to integer partitions as partitions and the meaning should be clear from context.

Integer partitions are ubiquitous combinatorial objects, as they are such a simple concept but studying them often leads to deep and difficult questions. Most famously, an exact formula for the number of partitions of $n$ is not known.

We will be concerned with integer partitions with distinct parts, $\lambda_i\neq \lambda_j$ for $i\neq j$. We let $D_n$ be the set of partitions into distinct parts where each part is at most $n$. The generating function for $D_n$ is well known to be
$$\sum_{\lambda\in D_n} q^{|\lambda|} = \prod_{i=1}^n(1+q^i).$$
For completeness, we include a short combinatorial proof of this fact. To construct a partition with distinct parts $\lambda$, we either include the number $i$ as a part, or we don't. If we do include it, it contributes $i$ to $|\lambda|$, and contributes nothing if we don't. An integer partition is completely determined by its parts, so we count each $\lambda\in D_n$ exactly once on the right hand side of the equation.

Goyt and Sagan showed the following connection between RGFs and $D_n$.
\begin{proposition}[\cite{gs:sps}]
$$\LS_{n,0}(121)=\RB_{n,0}(121)=\prod_{i=1}^{n-1}(1+q^i).$$
\end{proposition}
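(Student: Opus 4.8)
The plan is to prove both equalities by first recalling Sagan's characterization of the relevant avoidance class and then setting up an explicit bijection with $D_{n-1}$, the partitions into distinct parts of size at most $n-1$, tracked carefully through the $\ls$ (resp. $\rb$) statistic. Sagan's work gives a clean description of $R_{n,0}(121)$: these are exactly the RGFs that, after the initial occurrence of each new letter, never ``go back down and back up'' in a way that creates a $121$; concretely one expects $w\in R_{n,0}(121)$ to be built from an initial run $12\cdots m$ followed by a tail in which each value $a$ with $1\le a\le m$ can only reappear in a restricted position. I would extract from \cite{sag:pas} the precise normal form, so that an element of $R_{n,0}(121)$ is encoded by a subset of $\{1,\dots,n-1\}$ recording where the ``extra'' copies or the new maxima occur.

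First I would handle $\LS_{n,0}(121)$. For $w\in R_{n,0}(121)$, I would compute $\ls(w_i)$ position by position using the normal form: in the initial run every letter has $\ls$ equal to zero (nothing smaller precedes a strictly increasing prefix beyond what is already counted), and each letter in the tail contributes to $\ls(w)$ an amount equal to the number of distinct smaller values appearing before it, which by the normal form is a predictable function of that position. The goal is to show that as $w$ ranges over $R_{n,0}(121)$ the statistic $\ls(w)$ ranges exactly so that $\sum_{w} q^{\ls(w)}$ factors as $\prod_{i=1}^{n-1}(1+q^i)$: each index $i\in\{1,\dots,n-1\}$ corresponds to an independent binary choice contributing either $0$ or $i$ to $\ls(w)$. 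I would make this precise by exhibiting the bijection $w\mapsto \lambda(w)\in D_{n-1}$ where part $i$ is included in $\lambda(w)$ iff the $i$-th choice in the normal form is the ``non-trivial'' one, and checking $\ls(w)=|\lambda(w)|$ term by term; the combinatorial proof of $\sum_{\lambda\in D_n}q^{|\lambda|}=\prod_{i=1}^n(1+q^i)$ quoted just above then finishes this half.

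Next I would do $\RB_{n,0}(121)$ by the same scheme, computing $\rb(w_i)$, the number of distinct larger values to the right of position $i$, on the normal form; here the independent binary contributions again turn out to be $0$ or $i$ for $i=1,\dots,n-1$, so $\sum_w q^{\rb(w)}=\prod_{i=1}^{n-1}(1+q^i)$ as well, which simultaneously yields the middle equality $\LS_{n,0}(121)=\RB_{n,0}(121)$. Alternatively, and more cheaply, one can invoke a known symmetry: reversal or complementation on RGFs (suitably defined so as to stay within RGFs) sends $121$ to itself (it is a palindrome) and interchanges $\ls$ with $\rb$, giving the middle equality for free and reducing the whole proposition to computing just one of the two generating functions. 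I expect the main obstacle to be purely bookkeeping: pinning down Sagan's normal form precisely enough that the per-position contributions to $\ls$ (and to $\rb$) are manifestly the independent values $1,2,\dots,n-1$, since an off-by-one in the length of the relevant index set or a miscounted contribution in the initial run versus the tail would break the clean product. Once the normal form is written correctly, each step is a short verification, and the product formula for $D_{n-1}$ already in hand closes the argument.
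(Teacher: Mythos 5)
The paper does not prove this proposition at all --- it is quoted from Goyt and Sagan with a citation --- so there is no internal argument to compare your route against; I can only judge the proposal on its own terms. Your overall strategy (normal form for the avoidance class, per-letter computation of the statistic, bijection with $D_{n-1}$) is the right one and does go through, but the sketch contains two concrete errors located exactly at the point you yourself flag as the crux. First, the normal form you guess --- an initial run $12\cdots m$ followed by a constrained tail --- is the characterization of $R_{n,0}(112)$ (Lemma~\ref{sagan112} of this paper), not of $R_{n,0}(121)$. A copy of $121$ is a subword $w_iw_jw_k$ with $w_i=w_k<w_j$, and combined with the growth condition this forces all copies of each value to be consecutive: $R_{n,0}(121)$ consists exactly of the layered words $w=1^{a_1}2^{a_2}\cdots m^{a_m}$ with every $a_j\geq 1$ and $\sum_j a_j=n$. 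Second, your claim that the letters of the increasing prefix contribute $0$ to $\ls$ is false: in any RGF one has $\ls(w_i)=w_i-1$ (a fact used in the proof of Lemma~\ref{lemma112}), so the letter $j$ of an increasing prefix contributes $j-1$. Followed literally, your per-position bookkeeping would therefore not produce the product.

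With the correct normal form the argument closes as you predict. Each copy of $j$ contributes $j-1$ to $\ls$ and $m-j$ to $\rb$, so
$$\ls(w)=\sum_{j=1}^{m}(j-1)a_j=\sum_{s\in S}(n-s),\qquad \rb(w)=\sum_{j=1}^{m}(m-j)a_j,$$
where $S\subseteq\{1,\dots,n-1\}$ is the set of partial sums $a_1,\,a_1+a_2,\,\dots,\,a_1+\cdots+a_{m-1}$ recording the block boundaries. As $w$ ranges over $R_{n,0}(121)$, $S$ ranges over all subsets of $\{1,\dots,n-1\}$, and $S\mapsto\{n-s:s\in S\}$ is a bijection onto $D_{n-1}$ with $|\lambda|=\ls(w)$, giving $\LS_{n,0}(121)=\prod_{i=1}^{n-1}(1+q^i)$. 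For the middle equality, your ``cheap'' symmetry idea is essentially correct but must be implemented on compositions rather than letters: literally reversing an RGF leaves the class of RGFs, whereas the involution $(a_1,\dots,a_m)\mapsto(a_m,\dots,a_1)$ preserves $R_{n,0}(121)$ and exchanges $\ls$ with $\rb$. So the plan is salvageable, but only after replacing the normal form and the per-letter $\ls$ values.
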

Later, Campell, et. al. extended the work of Goyt and Sagan and proved the following
\begin{theorem}[\cite{ddggprs:rgf}]\label{distinct}
$$\LS_{n,0}(121)=\RB_{n,0}(121)=\prod_{i=1}^{n-1}(1+q^i) = \LS_{n,0}(112) = \RB_{n,0}(122).$$
\end{theorem}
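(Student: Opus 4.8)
The equality $\LS_{n,0}(121)=\RB_{n,0}(121)=\prod_{i=1}^{n-1}(1+q^i)$ is the Goyt--Sagan proposition quoted just above, so the work lies in the two new identities $\LS_{n,0}(112)=\prod_{i=1}^{n-1}(1+q^i)$ and $\RB_{n,0}(122)=\prod_{i=1}^{n-1}(1+q^i)$. My plan is to reduce each of these, via the relevant characterization of the avoidance class, to the single generating-function identity
\[
\prod_{i=1}^{n-1}(1+q^i)=\sum_{b=0}^{n-1}q^{\binom{b+1}{2}}\gauss{n-1}{b}_q,
\]
which is Gauss's $q$-binomial theorem and is itself the ``staircase plus box'' decomposition of $D_{n-1}$ underlying the combinatorial proof of $\sum_{\lambda\in D_{n-1}}q^{|\lambda|}=\prod_{i=1}^{n-1}(1+q^i)$ recalled above. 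Concretely I will show that both $\LS_{n,0}(112)$ and $\RB_{n,0}(122)$ equal the right-hand side of the display term by term, with $b$ recording the number of values used beyond the leading $1$.

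For $\LS_{n,0}(112)$ I would start from Sagan's characterization (Lemma~\ref{sagan112}): $w=12\cdots m\,w'$, where $m$ is the length of the initial run and $w'$ is weakly decreasing with all entries at most $m$. The key observation is that for \emph{every} index $i$, all of $1,2,\dots,w_i-1$ already occur to the left of $i$ (they sit inside the initial run $12\cdots m$, and $w_i\le m$), so $\ls(w_i)=w_i-1$ and hence $\ls(w)=\sum_i w_i-n=\binom{m}{2}+|\nu|$, where $\nu=(w'_1-1,\dots,w'_{n-m}-1)$ is an arbitrary partition inside an $(n-m)\times(m-1)$ box. Summing $q^{\ls(w)}$ over the $112$-avoiders with initial run length $m$ gives $q^{\binom{m}{2}}\gauss{(n-m)+(m-1)}{m-1}_q=q^{\binom{m}{2}}\gauss{n-1}{m-1}_q$, and summing over $m$ (write $b=m-1$) yields exactly the right-hand side of the display. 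Equivalently, $w\mapsto\delta_{m-1}+\nu^{c}$ (the staircase $\delta_{m-1}=(m-1,m-2,\dots,1)$ plus the conjugate of $\nu$) is a bijection onto $D_{n-1}$ carrying $\ls$ to $|\lambda|$.

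For $\RB_{n,0}(122)$ I would first establish the (easy) characterization of the class: since $w_1=1$, repeating any value $\ge 2$ immediately creates a copy of $122$, so a $122$-avoider is exactly an RGF in which every value $\ge 2$ occurs once, and the growth condition then forces $2,3,\dots,m$ to appear in that order. Thus such a $w$ is encoded by the word $u\in\{A,B\}^n$ with $u_1=A$ whose $B$'s mark the positions of $2,3,\dots,m$ (so $\#B=m-1$). A short count of $\rb$ on this encoding gives $\rb(w)=\binom{m-1}{2}+\operatorname{inv}_{AB}(u)$, the first term counting pairs of $B$'s (a value $v\ge 2$ contributes $m-v$) and $\operatorname{inv}_{AB}(u)$ counting pairs consisting of an $A$ followed later by a $B$ (a $1$ contributes the number of $B$'s after it); summing $q^{\operatorname{inv}_{AB}(u)}$ over the words with exactly $b$ $B$'s, all of which lie to the right of the forced leading $A$, gives $q^{b}\gauss{n-1}{b}_q$. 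Hence $\RB_{n,0}(122)=\sum_b q^{\binom{b}{2}+b}\gauss{n-1}{b}_q$, which is again the right-hand side of the display since $\binom{b}{2}+b=\binom{b+1}{2}$; as before this can be read as an $\rb$-preserving bijection onto $D_{n-1}$, obtained by adding $\delta_{b-1}$ to the conjugate of the weakly decreasing list whose $j$-th entry is the number of $B$'s after the $j$-th $A$.

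The routine parts are the two summations over $m$ (respectively $b$) and the invocation of the $q$-binomial theorem. The step that takes the most care, and which I expect to be the main obstacle, is the $\RB_{n,0}(122)$ side: one must pin down the characterization of $122$-avoiders and then bookkeep $\rb$ correctly across the interleaving of the repeated $1$'s with the strictly increasing run $2,3,\dots,m$ --- in particular checking the two contributions claimed above and the resulting $q$-exponent $\binom{b}{2}+b$. By contrast, for $112$ the characterization is handed to us by Lemma~\ref{sagan112}, and the identity $\ls(w)=\sum_i w_i-n$ collapses the computation almost immediately.
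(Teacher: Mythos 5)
Your argument is correct --- the identity $\ls(w)=\sum_i(w_i-1)$, the $\rb$ bookkeeping $\rb(w)=\binom{b}{2}+b+\mathrm{inv}_{AB}(u_2\cdots u_n)$, and the reduction of both halves to Gauss's identity $\prod_{i=1}^{n-1}(1+q^i)=\sum_b q^{\binom{b+1}{2}}\gauss{n-1}{b}_q$ all check out --- but it is organized differently from what the paper does. The paper states Theorem~\ref{distinct} as a citation to Campbell et al.\ and does not prove it; the only piece it establishes is $\LS_{n,0}(112)=\sum_{\lambda\in D_{n-1}}q^{|\lambda|}$, via the explicit bijection $\varphi$ of Lemma~\ref{lemma112} (complement the repeated letters to $m-w_i+1$, then take partial sums of letter counts). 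Your staircase-plus-box map $w\mapsto\delta_{m-1}+\nu^{c}$ is in fact the \emph{same} bijection as $\varphi$ in different clothing: the $k$th part of $\varphi(w)$ counts the letters of $w'$ that are at most $m-k$, which splits as $(m-k)$ from the initial run plus $\#\{\ell:\nu_\ell\geq k\}=\nu^c_k$ from the tail, and on the paper's example $12345665332$ both maps give $(10,8,5,4,2)$. The difference is that you arrive at it through a decomposition by initial-run length and the $q$-binomial theorem rather than writing the map down directly. The trade-off: the paper needs $\varphi$ in closed form because it is reused as the engine of Proposition~\ref{ls112prop} (where $\psi(w)=(\varphi(w'),b-1)$), whereas your route is more computational but treats the two new equalities uniformly and, in particular, supplies a complete proof of the $\RB_{n,0}(122)$ half, for which the paper gives no argument at all.
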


It is interesting to examine what happens to the above equalities when we move from avoiding the patterns to containing exactly one copy. As $\#R_{n,1}(121) \neq \#R_{n,1}(112)$, it cannot be the case that we find a complete analogue of Theorem \ref{distinct}, however we do retain a partial version of the theorem.

\begin{theorem}\label{112thm}
We have 
$$\RB_{n,1}(122) = \LS_{n,1}(112)  =\sum_{\lambda \in D_{n-2}} q^{|\lambda|}[\ell(\lambda)]_q =  q[n-2]_q\prod_{i=2}^{n-2}(1+q^i).$$
\end{theorem}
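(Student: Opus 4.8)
The plan is to establish the chain of equalities left to right, dealing first with the combinatorial identity between the two generating functions and the sum over distinct partitions, and then verifying the closed product form by a standard $q$-series manipulation.

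First I would handle $\LS_{n,1}(112)=\sum_{\lambda\in D_{n-2}}q^{|\lambda|}[\ell(\lambda)]_q$. By Lemma~\ref{lem112card}, every $w\in R_{n,1}(112)$ has the form $12\cdots(m-1)w'$ where $w'$ contains the maximum $m$ and has exactly one noninversion; equivalently (following the proof of Theorem~\ref{112card}) $w$ is obtained from a unique $\bar w\in R_{n-1,0}(112)$ by inserting one element $b\in\{1,\dots,m-1\}$ just to the left of the rightmost copy of $c'=\min\{\bar w_i: i>m-1,\ \bar w_i>b\}$. By Lemma~\ref{sagan112}, $\bar w=12\cdots m\,u$ with $u$ weakly decreasing and entries in $[m]$; recording which of $1,\dots,m-1$ actually appear in $u$ (they must be a down-set, i.e. an initial segment $\{1,\dots,j\}$ is \emph{not} forced, but the \emph{set} of values repeated in $u$ together with their multiplicities determines $u$) I would pass to the integer partition $\lambda$ whose parts are the ``gaps'' created by these repetitions. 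Concretely, the cleanest route is: weakly decreasing words $u$ on $[m]$ of length $n-1-m$ are in bijection with partitions inside an $(n-1-m)\times(m)$ box, and $\ls$ of the resulting RGF is read off as $|\lambda|$ for the conjugate-type partition, while the choice of $b$ (which ranges over $m-1$ values but only contributes when $b$ is ``below'' some part) contributes the factor $[\ell(\lambda)]_q$ after summing $q^{\ls}$ over the legal insertion positions. I would organize this so that summing over $m$ and over $u$ collapses exactly to summing over all $\lambda\in D_{n-2}$ with weight $q^{|\lambda|}[\ell(\lambda)]_q$; the bijection sends the pattern-maximum $m$ and the weakly-decreasing tail to the distinct parts of $\lambda$, and $b$ to the index of a part.

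Next, $\RB_{n,1}(122)=\sum_{\lambda\in D_{n-2}}q^{|\lambda|}[\ell(\lambda)]_q$ I would prove directly from Lemma~\ref{122lemma}: $w=12w'$ with $w'$ containing a $2$ and all values $>2$ distinct. Such a $w$ is determined by (a) the set $S\subseteq\{3,4,\dots\}$ of large values used, which because $w$ is an RGF must be $\{3,\dots,m\}$ for some $m$, together with their relative order among themselves and among the $1$'s and $2$'s, and (b) the position of the (unique second, pattern-forming) $2$. Computing $\rb$ on this structure: each large value $j$ contributes to $\rb$ according to how many larger values sit to its right, and the $1$'s and the pattern $2$ contribute the remaining terms. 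A careful bookkeeping shows the generating function factors as a sum over the composition recording the spacing of $1$'s between successive large values times a factor $[\,\cdot\,]_q$ coming from the free position of the second $2$ — again exactly $\sum_{\lambda\in D_{n-2}}q^{|\lambda|}[\ell(\lambda)]_q$. An alternative, possibly slicker, is to build a direct bijection $R_{n,1}(112)\to R_{n,1}(122)$ that carries $\ls$ to $\rb$, avoiding recomputing the distinct-partition sum twice; I would present whichever turns out shorter.

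Finally, the closed form $\sum_{\lambda\in D_{n-2}}q^{|\lambda|}[\ell(\lambda)]_q=q[n-2]_q\prod_{i=2}^{n-2}(1+q^i)$ is a pure $q$-series identity. Writing $[\ell(\lambda)]_q=\frac{1-q^{\ell(\lambda)}}{1-q}$ and $\sum_{\lambda\in D_N}q^{|\lambda|}x^{\ell(\lambda)}=\prod_{i=1}^{N}(1+xq^i)$, differentiate (or apply the operator $x\,\frac{d}{dx}$, or simply extract) to get $\sum_{\lambda\in D_N}q^{|\lambda|}[\ell(\lambda)]_q = \frac{1}{1-q}\Bigl(\prod_{i=1}^N(1+q^i)-\prod_{i=1}^N(1+q^{i+1})\Bigr)$; the telescoping collapses $\prod_{i=1}^N(1+q^{i+1})=\frac{\prod_{i=2}^{N+1}(1+q^i)}{1}$ and pulling out the common factor $\prod_{i=2}^{N}(1+q^i)$ leaves $\frac{(1+q)-(1+q^{N+1})}{1-q}\prod_{i=2}^N(1+q^i) = q\,[N]_q\,\prod_{i=2}^N(1+q^i)$ with $N=n-2$, as claimed.

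\textbf{Main obstacle.} The delicate part is the first bijection: pinning down exactly how $\ls(w)$ (resp.\ $\rb(w)$) decomposes into the weight $q^{|\lambda|}$ of the distinct-parts partition plus the $[\ell(\lambda)]_q$ factor coming from the one ``free'' insertion — in particular checking that the inserted element $b$ (or the free second $2$) really does range over positions contributing $q^0,q^1,\dots,q^{\ell(\lambda)-1}$ and nothing spurious, so that the statistic is \emph{exactly} additive across the decomposition. Everything after that is routine binomial/$q$-series algebra.
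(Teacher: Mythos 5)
Your architecture is exactly the paper's: reduce $R_{n,1}(112)$ to $R_{n-1,0}(112)$ by deleting the middle letter $b$ of the unique $112$-copy, biject the avoidance class to partitions with distinct parts, let $b$ supply the $[\ell(\lambda)]_q$ factor, handle $\RB_{n,1}(122)$ by a statistic-preserving bijection from $R_{n,1}(112)$, and finish with the telescoping identity (your last paragraph is verbatim the paper's computation and is correct). But the two bijections that carry all the combinatorial content are never constructed, and what you flag as the ``main obstacle'' is precisely the content of the paper's central lemma. Concretely: (i) the intermediate object you propose for the avoiders --- weakly decreasing tails $u$ of length $n-1-m$ on $[m]$, i.e.\ partitions in an $(n-1-m)\times m$ box --- does not land in $D_{n-2}$ without a further (unspecified) conversion, and it is not the map the paper uses. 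The paper's $\varphi$ replaces each \emph{repeated} letter $w_i$ by $m-w_i+1$ and then sets $\lambda_{m-i}=\sum_{j=1}^{i}\#j(w')$; the growth condition forces these cumulative counts to be strictly increasing, which is what produces \emph{distinct} parts, and the identity $\ls(w)=|\varphi(w)|$ follows from a short double-sum interchange. Your ``gaps created by the repetitions'' gestures at something like this but you never verify either distinctness or the statistic. (ii) Your worry about whether $b$ contributes exactly $q^{0},\dots,q^{\ell(\lambda)-1}$ has a one-line resolution you missed: in any RGF, $\ls(v_i)=v_i-1$, so the inserted letter contributes $q^{b-1}$ regardless of where it sits, and $b$ ranges over $\{1,\dots,m-1\}=[\ell(\lambda)]$ since $\ell(\lambda)=m-1$; additivity of $\ls$ across the deletion is then immediate because deleting $b$ does not change $\ls$ of any other letter.

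For $\RB_{n,1}(122)$ you offer two plans and commit to neither. Plan B (a direct bijection $\eta:R_{n,1}(112)\to R_{n,1}(122)$ with $\ls(w)=\rb(\eta(w))$) is what the paper does, and it requires an explicit construction: build the initial run $12\cdots m$, place the second $2$ after the letter $m-b+1$, and for each remaining tail letter $w_i$ place a $1$ after $m-w_i+1$; the key check is that $\rb$ of each placed letter equals $w_i-1=\ls(w_i)$. Plan A (computing $\rb$ directly from Lemma~\ref{122lemma}) is plausible but the ``careful bookkeeping'' is exactly the part that needs to be done. As written, the proposal is a correct outline of the paper's proof with its two essential bijections, and the verification that the statistics transport through them, left as acknowledged gaps.
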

We break the proof of Theorem \ref{112thm} down into several parts. 
First, we give an explicit bijection $\varphi: R_{n,0}(112)\rightarrow D_{n-1}$ by composing bijections given in \cite{gs:sps} and \cite{ddggprs:rgf} and include a proof for completeness. The map $\varphi$ will have the property that $\ls(w) = |\varphi(w)|$.

We will need some notation first. Given a word $w$ and an element $j$ of $w$, we define $\#j(w)$ to be the number of copies of $j$ in $w$. For example, we have $\#3(1213433) = 3$ and $\#2(122232)=4$.
\begin{lemma}[\cite{gs:sps,ddggprs:rgf}]\label{lemma112} There exists an explicit bijection
$$\varphi : R_{n,0}(112)\rightarrow D_{n-1}$$
such that for $w\in R_{n,0}(112)$ we have $\ls(w) = |\varphi(w)|$.
\end{lemma}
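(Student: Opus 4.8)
The plan is to recall from Lemma \ref{sagan112} that every $w \in R_{n,0}(112)$ has the form $123\cdots m\, w'$ where the tail $w'$ is weakly decreasing with all entries in $\{1,\dots,m\}$. I would build $\varphi(w)$ directly from this data: the initial run $123\cdots m$ uses up the first $m$ positions, leaving $n-m$ positions in the weakly decreasing tail $w'$, with entries drawn from $[m]$. Such a weakly decreasing word of length $n-m$ over $[m]$ is exactly a partition fitting in an $(n-m)\times m$ box after we subtract; equivalently, one can read off the positions in the tail where the value strictly drops (or where a value is first attained below $m$) to record a strictly decreasing sequence of ``heights,'' which is a partition into distinct parts each at most $n-1$. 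The target set $D_{n-1}$ has generating function $\prod_{i=1}^{n-1}(1+q^i)$, and since $\#R_{n,0}(112) = 2^{n-1}$ by Lemma \ref{sagan112}, the cardinalities already match, so the content is in exhibiting an explicit weight-preserving correspondence rather than a mere counting argument.

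The key steps, in order, would be: (1) State the explicit map. Writing $w = 123\cdots m\, w'$ with $w'$ weakly decreasing over $[m]$, associate to $w$ the partition whose parts are obtained from the column heights of the Young-diagram-like picture of $w'$; concretely, if the distinct values appearing strictly below $m$ in the tail, together with their multiplicities, are recorded appropriately, one gets a set of distinct integers in $[n-1]$. The cleanest formulation is to let $\varphi(w)$ be the conjugate of the partition read off from $w'$, chosen so that distinctness is automatic from the fact that $123\cdots m$ is a \emph{strictly} increasing run of maximal length. (2) Verify $\varphi$ is well-defined into $D_{n-1}$: distinctness of parts and the bound $\le n-1$. (3) Construct the inverse: given $\lambda \in D_{n-1}$ with distinct parts, reconstruct $m$ (from the number of parts plus something determined by $n$) and then the weakly decreasing tail, and prepend $123\cdots m$; check the result is an RGF avoiding $112$. (4) Compute $\ls(w)$ and check it equals $|\varphi(w)|$: in $w = 123\cdots m\, w'$, the initial run contributes nothing to $\ls$ (each early entry is the largest so far in the standard convention, or contributes $0$ by the definition of ``left smaller'' applied carefully), and each entry of the weakly decreasing tail $w'$ contributes to $\ls$ exactly the number of distinct values to its left that are smaller — which over the whole tail sums to precisely the quantity $|\lambda|$ recorded by $\varphi$. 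This last identity is really a bookkeeping lemma: summing, over all tail entries, the number of smaller distinct predecessors equals the total ``area'' statistic that $\varphi$ encodes.

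I expect the main obstacle to be step (4): pinning down the exact definition of $\ls$ (the ``left smaller'' statistic — number of \emph{unique} smaller elements to the left) on the weakly decreasing tail and matching its sum to $|\varphi(w)|$ bit-for-bit. One has to be careful about whether repeated values in $w'$ each contribute, and about the interaction between the initial run $123\cdots m$ (which sits to the left of every tail entry and contributes smaller distinct values) and the tail itself. Getting the normalization right — e.g.\ why the parts land in $D_{n-1}$ and not $D_{n-2}$ or $D_n$ — requires tracking the off-by-one coming from $w_1 = 1$ being forced. The construction of $\varphi$ itself and its inverse (steps 1--3) should be routine once the encoding is fixed, since they are essentially the classical bijection between weakly decreasing words in a box and partitions in that box, transported through Sagan's characterization; the only subtlety is ensuring the ``distinct parts'' condition, which is exactly what the strictly increasing initial run buys us.
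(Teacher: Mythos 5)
Your overall route---decompose $w = 12\cdots m\,w'$ via Lemma~\ref{sagan112} and encode the weakly decreasing tail $w'$ as a partition in a box, with the initial run supplying the staircase that forces distinct parts---can be made to work, and once made precise it reproduces the paper's map exactly. (The paper phrases it differently: replace each repeated entry $w_i$ by $m-w_i+1$ and set $\lambda_{m-i}=\sum_{j=1}^{i}\#j$ of the modified word; unwinding this is precisely ``conjugate of the tail in its box, plus the staircase $(m-1,m-2,\dots,1)$.'') But two things in your write-up are genuinely broken rather than merely unfinished. The more serious one is the statistic computation in your step (4), which rests on a false premise: in any RGF the growth condition forces every value $1,\dots,w_\ell-1$ to occur before position $\ell$, so $\ls(w_\ell)=w_\ell-1$ for \emph{every} letter, including those of the initial run. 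Hence $12\cdots m$ contributes $0+1+\cdots+(m-1)=\binom{m}{2}$ to $\ls(w)$, not $0$ as you assert, and correspondingly the tail contributes only $|\lambda|-\binom{m}{2}$, not all of $|\lambda|$. The identity $\ls(w)=|\varphi(w)|$ closes precisely because the initial run's contribution $\binom{m}{2}$ equals the size of the staircase that must be added to the boxed partition to make its parts distinct; with your accounting the two sides would differ by $\binom{m}{2}$.

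The second problem is that the lemma demands an \emph{explicit} bijection and you never commit to one: ``column heights recorded appropriately'' and ``chosen so that distinctness is automatic'' leave the actual formula, the verification that the parts are distinct and at most $n-1$, and the inverse all unspecified. Distinctness is \emph{not} automatic from conjugating the tail---the conjugate of a partition in a box is just another partition in a box, generally with repeated parts. It comes from explicitly adding the staircase, or equivalently, in the paper's formulation, from the fact that every value $1,\dots,m$ occurs at least once in the modified word (thanks to the initial run), so the cumulative counts $\sum_{j\le i}\#j$ are \emph{strictly} increasing. Once you fix the $\ls$ bookkeeping and write the map down concretely, the remaining steps (well-definedness, inversion) are routine, as you say.
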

\begin{proof}
Let $w\in R_{n,0}(112)$ with maximum $m$. Replace all repeated elements $w_i$ in $w$ with $m-w_i+1$ to form a new word $w'$. Then $$\varphi(w):= (\lambda_{1},\lambda_{2},\dots,\lambda_{m-1})$$ where 
$$\lambda_{m-i} := \sum_{j=1}^{i}\#j(w').$$ For example, if we run the algorithm for constructing $\varphi$ on $12345665332\in R_{11,0}(112)$ we get the following: \begin{align*}
w &= 12345665332, \\
w' &= 12345612445, \\
\varphi(w) = \lambda &= (10,8,5,4,2).
\end{align*}
First, we must show that $\varphi$ is well-defined. Observe that $$\sum_{j=1}^{i}\#j(w')$$ is strictly increasing, as the growth property of RGFs assures that all values between $1$ and $m$ occur in $w'$. Therefore the parts of $\lambda$ are distinct and decreasing and so $\lambda$ is a partition with distinct parts. By construction $\lambda_1 < n-1$, as it is the count of all elements strictly less than $m$ in $w'$. Therefore $\lambda \in D_{n-1}$.

Next, we show that $\varphi$ is invertible. Given $\lambda\in D_{n-1}$, we must construct $\varphi^{-1}(\lambda) = w \in R_{n,0}(112)$. First, we construct $w'$ by getting the counts $\#i(w') = \lambda_{m-i} - \lambda_{m-i+1}$. We then replace each repeat $w'_i$ in $w'$ with $m-w'_i+1$ to obtain the elements of $w$. By the characterization of $R_{n,0}(112)$ given in Lemma \ref{sagan112}, we now observe that $w$ is completely determined by its elements; there is only one legal ordering of elements for $w$ to be in $R_{n,0}(112)$. We are essentially performing our algorithm for constructing $\lambda$ in reverse, and so we leave the reader to work out the details of showing that $\varphi^{-1}$ is really the inverse of $\varphi$.

Lastly, we must show that $\ls(w) = |\varphi(w)|$. First, observe that $\ls(w_\ell) = w_\ell-1$ by the growth property of RGFs. Therefore we can write 
$$\ls(w) = \sum_{\ell=1}^n (w_\ell -1) = \sum_{k=1}^m (k-1)\#k(w).$$ Next, observe that because we replace every repeat of $w_\ell$ with $m-w_\ell+1$ in $w'$ that 
$\# k (w) = \#(m-k+1)(w').$ Therefore
$$\sum_{k=1}^m (k-1)\#k(w) = \sum_{j=1}^m (m-j)\#j(w') = \sum_{j=1}^{m-1} \sum_{i=1}^{m-j}\#j(w').$$
We interchange orders of summation and use the definition of $\lambda$ to obtain
$$\sum_{i=1}^{m-1} \sum_{j=1}^{i}\#j(w') = \sum_{i=1}^{m-1} \lambda_{m-i} = |\lambda|,$$
which completes the proof.
\end{proof}
We will use $\varphi$ to construct our next bijection.
\begin{proposition}\label{ls112prop}
We have 
$$\LS_{n,1}(112) = \sum_{\lambda \in D_{n-2}} q^{|\lambda|}[\ell(\lambda)]_q = q[n-2]_q\prod_{i=2}^{n-2}(1+q^i).$$

\end{proposition}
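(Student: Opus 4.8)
The plan is to mimic the $k=0$ argument from Lemma~\ref{lemma112}: build an explicit bijection from $R_{n,1}(112)$ onto pairs consisting of a distinct-parts partition $\lambda\in D_{n-2}$ together with a marked part, where the mark records a contribution tracked by $q^{|\lambda|}[\ell(\lambda)]_q$, and then show this mark matches up with the $\ls$ statistic. Concretely, by Theorem~\ref{112card} and its proof, every $w\in R_{n,1}(112)$ arises from a unique $w'\in R_{n-1,0}(112)$ by inserting a single repeated element $b\in\{1,\dots,m-1\}$ (where $m=\max w'$) in the forced position — immediately left of the rightmost copy of $c'=\min\{w'_i : i>m-1,\ w'_i>b\}$. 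So the underlying data for $w$ is a pair $(w',b)$ with $w'\in R_{n-1,0}(112)$ and $1\le b\le m-1$.

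First I would apply the $k=0$ bijection $\varphi$ of Lemma~\ref{lemma112} to $w'$, obtaining $\lambda'=\varphi(w')\in D_{n-2}$ with $|\lambda'|=\ls(w')$ and $\ell(\lambda')=m-1$. Since $b$ ranges over exactly $m-1=\ell(\lambda')$ values, the choice of $b$ is equivalent to choosing an index $j\in\{1,\dots,\ell(\lambda')\}$; assigning this the weight $q^{j-1}$ produces the factor $[\ell(\lambda')]_q$. Thus the raw generating function over all $(w',b)$ with the grading $q^{\ls(w')}q^{j-1}$ is $\sum_{\lambda\in D_{n-2}}q^{|\lambda|}[\ell(\lambda)]_q$, and the second equality $\sum_{\lambda\in D_{n-2}}q^{|\lambda|}[\ell(\lambda)]_q = q[n-2]_q\prod_{i=2}^{n-2}(1+q^i)$ is a routine generating-function manipulation: $D_{n-2}$ partitions correspond to subsets $S\subseteq\{1,\dots,n-2\}$, the length is $|S|$, and since every nonempty such $S$ contains a unique smallest element $s$, one peels off $s$ to rewrite the sum as $\sum_{s=1}^{n-2}q^s\prod_{i=s+1}^{n-2}(1+q^i)$, which telescopes (by reverse induction on $s$, using $q^s+q^{s}(1+q^{s+1})(\cdots)\cdot$ nothing fancy) to $q[n-2]_q\prod_{i=2}^{n-2}(1+q^i)$; I would verify the base and inductive step but not belabor it.

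The substantive step — and the main obstacle — is proving that under this correspondence $\ls(w) = |\lambda'| + (j-1)$ when $b$ is the $j$-th smallest element of $\{1,\dots,m-1\}$, i.e.\ simply $\ls(w)=\ls(w')+(b-1)$. Here I would argue directly from the definition of $\ls$: inserting the value $b$ contributes $\ls(b)=$ (number of distinct values strictly less than $b$ appearing to its left); by Lemma~\ref{lem112card} the inserted $b$ sits after the full initial run $12\cdots(m-1)$, so all of $1,2,\dots,b-1$ occur to its left, giving $\ls(b)=b-1$. It remains to check that inserting $b$ does not change $\ls(w_\ell)$ for any other position $\ell$: for positions left of the insertion point this is immediate, and for positions to the right, the only values whose left-multiset changes are those that now see an extra copy of $b$ — but $b$ already appears in the initial run, so no new \emph{distinct} smaller value is created, and $\ls$ counts distinct values. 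Hence $\ls$ is additive as claimed. Combining, $\LS_{n,1}(112)=\sum_{w}q^{\ls(w)} = \sum_{(w',b)}q^{\ls(w')+(b-1)} = \sum_{\lambda\in D_{n-2}}q^{|\lambda|}[\ell(\lambda)]_q = q[n-2]_q\prod_{i=2}^{n-2}(1+q^i)$, completing the proof.
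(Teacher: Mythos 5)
Your treatment of the first equality is essentially the paper's own argument: decompose $w\in R_{n,1}(112)$ as a pair $(w',b)$ with $w'\in R_{n-1,0}(112)$ and $b\in[m-1]$, push $w'$ through $\varphi$, and check that $\ls$ is additive under the insertion (which is immediate since $\ls(v_i)=v_i-1$ for any RGF). That half is correct.

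The gap is in the second equality. Your rewriting
$$\sum_{\lambda\in D_{n-2}}q^{|\lambda|}[\ell(\lambda)]_q \;=\; \sum_{s=1}^{n-2}q^s\prod_{i=s+1}^{n-2}(1+q^i)$$
is false: grouping the nonempty subsets $S\subseteq[n-2]$ by their smallest element $s$ produces exactly the right-hand side, but that sum is just $\prod_{i=1}^{n-2}(1+q^i)-1$, the generating function for nonempty $\lambda\in D_{n-2}$ by size alone --- the factor $[\ell(\lambda)]_q$ has silently disappeared. For $n=4$ the left side is $q+q^2+q^3+q^4$ (the pair $((2,1),2)$ contributes $q^4$) while your right side is $q+q^2+q^3$; and neither does the right side equal the claimed target $q[n-2]_q\prod_{i=2}^{n-2}(1+q^i)$, so the ``telescoping'' you defer would fail at the first check. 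The identity you actually need is proved in the paper by a different manipulation: multiply $f_n=\sum_{\lambda\in D_{n-2}}q^{|\lambda|}[\ell(\lambda)]_q$ by $(1-q)$, use $(1-q)[\ell(\lambda)]_q=1-q^{\ell(\lambda)}$, and observe that $q^{|\lambda|+\ell(\lambda)}$ is the weight of the distinct-parts partition $\lambda+(1,1,\dots,1)$ with parts in $\{2,\dots,n-1\}$, so that
$$(1-q)f_n=\prod_{i=1}^{n-2}(1+q^i)-\prod_{i=1}^{n-2}(1+q^{i+1})=q\bigl(1-q^{n-2}\bigr)\prod_{i=2}^{n-2}(1+q^i),$$
and dividing by $1-q$ gives $f_n=q[n-2]_q\prod_{i=2}^{n-2}(1+q^i)$. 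Replace your subset-peeling step with this (or any correct proof of the identity) and the proposal goes through.
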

\begin{proof}

First we define 
$$A_{n} = \{ (\lambda, b-1) : \lambda \in D_{n} \text{ and } b\in[\ell(\lambda)]\}$$ and note that 
$$\sum_{(\lambda,b-1)\in A_{n-2}} q^{|\lambda|+b-1} = \sum_{\lambda\in D_{n-2}} q^{|\lambda|} [\ell(\lambda)]_q.$$

We define a map $\psi: R_{n,1}(112)\rightarrow A_{n-2}$ as follows. Let $w\in R_{n,1}(112)$ and let $abc$ be the copy of $112$ in $w$. Delete the element $b$ from $w$ to obtain $w'\in R_{n-1,0}(112)$. Then we will construct $\psi(w) = (\varphi(w'), b-1)$. Continuing our example from Lemma~\ref{lemma112} with $abc$ in bold,  we have 
\begin{align*}
w &= 1\bm{2}3456653\bm{23}2,\\
w' &= 12345665332,\\
(\lambda, b-1) &= ((10,8,5,4,2),1).
\end{align*}

To see that $\psi$ is well defined, first note that because $abc$ is the only copy of $112$ in $w$, deleting $b$ ensures that $w'\in R_{n-1,0}(112)$. By Lemma~\ref{lemma112}, $\lambda \in D_{n-2}$ and $\ell(\lambda) = m-1$, where $m$ is the maximum of $w$. Therefore because $1\leq b\leq m-1$ we have $b\in[\ell(\lambda)]$.

We now construct $\psi^{-1}$. First, we need some notation. For $x\in \N$, let 
$$S(x,w) := \max_{i}\{ w_i : w_i> x\}.$$ We now begin to construct $\psi^{-1}(\lambda,b-1)$ for $(\lambda,b-1)\in A_{n-2}$. Let $w' = \varphi^{-1}(\lambda)$. And let $j = S(b,w')$. Insert $b$ at index $j-1$ in $w'$ to obtain $\psi^{-1}(\lambda,b)$. To see that $\psi^{-1}$ is well-defined, first note that $\varphi^{-1}(\lambda) \in R_{n-1,0}(112)$ by Lemma~\ref{lemma112}. By inserting $b$ to position $j-1$, we ensure that exactly one element of $w'$ is to the right of $b$ and bigger than it. Thus we create exactly one copy $abc$ of 112, where $a$ is the copy of $b$ in the initial run and $c$ is the element at index $j$. Note that there is precisely one way to insert $b$ into $w'$ and obtain exactly one copy of $112$ and thus it should be clear that these are indeed inverses.

Now we wish to show that if $\psi(w) = (\lambda, b-1)$ then $\ls(w) = |\lambda |+ b-1$. The element $b$ in $w$ does not change the value of $\ls(w')$ and thus $\ls(w') =|\lambda|$. In addition $\ls(b) = b-1$ and so we have established that

$$\LS_{n,1}(112) =\sum_{\lambda \in D_{n-2}} q^{|\lambda|}[\ell(\lambda)]_q.$$

We now proceed with a standard generating function argument and let $$f_n:= \sum_{\lambda \in D_{n-2}}q^{|\lambda|}[\ell(\lambda)]_q.$$ By telescoping each $[\ell(\lambda)]_q$ we can see that 
$$(1-q)f_n = \prod_{i=1}^{n-2}(1+q^i) - \prod_{i=1}^{n-2}(1+q^{i+1})$$
and after some algebra, we arrive at 
$$f_n = q[n-2]_q\prod_{i=2}^{n-2}(1+q^i),$$
which proves the proposition.
\end{proof}
Next we will show bijectively that 
\begin{proposition}\label{rb122prop}
We have 
$$\RB_{n,1}(122) = \LS_{n,1}(112).$$
\end{proposition}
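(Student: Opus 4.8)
The plan is to build an explicit bijection $\rho\colon R_{n,1}(122)\to R_{n,1}(112)$ that preserves the relevant statistics, namely sending $\rb$ to $\ls$, so that $\RB_{n,1}(122)=\LS_{n,1}(112)$ follows termwise. The strategy mirrors the one used for Proposition~\ref{ls112prop}: I would first use Lemma~\ref{122lemma} to strip off the forced structure of a word $w\in R_{n,1}(122)$. By that lemma, $w=12v$ where $v$ contains a $2$ and every letter exceeding $2$ appears at most once in $v$; equivalently, $w$ has a unique repeated pair of $2$'s (the copy $abc$ of $122$ being the first $1$ together with two $2$'s), and deleting the \emph{second} $2$ from $w$ yields a word $\tilde w$ in which $2$ appears exactly once, i.e.\ essentially an RGF avoiding $122$ of length $n-1$. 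So the natural factorization is: delete the extra letter to get an avoidance-class word, record a small amount of extra data (a position/value for reinsertion), land in a set like $A_{n-2}$, and then run the map $\psi^{-1}$ from Proposition~\ref{ls112prop} to produce an element of $R_{n,1}(112)$.

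Concretely, I would proceed in the following steps. First, given $w=12v\in R_{n,1}(122)$, locate the second $2$ and delete it, obtaining $w'\in R_{n-1,0}(122)$; the position of that second $2$ among the letters of $w'$ that are $\geq 2$ records an integer $b$, and I claim the pair $(w',b-1)$, after composing with whatever bijection $R_{n-1,0}(122)\to D_{n-2}$ comes from Theorem~\ref{distinct} (the one realizing $\rb$), lands in $A_{n-2}$ with weight $\rb(w)=|\lambda|+(b-1)$. Second, I would verify that $\rb$ decomposes correctly: the deleted second $2$ contributes exactly its ``right bigger'' count to $\rb(w)$, and this count equals $b-1$ for the appropriate indexing of insertion positions, while the rest of $\rb(w)$ equals $\rb(w')=|\lambda|$ under the Theorem~\ref{distinct} bijection. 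Third, having the common target $\sum_{\lambda\in D_{n-2}}q^{|\lambda|}[\ell(\lambda)]_q$ (which by Proposition~\ref{ls112prop} already equals $\LS_{n,1}(112)$), I conclude. In fact, the cleanest writeup may avoid integer partitions entirely and instead directly exhibit $\rho=\psi^{-1}\circ\psi'$, where $\psi'\colon R_{n,1}(122)\to A_{n-2}$ is the analogue of $\psi$ built from the $\rb$-realizing bijection of Theorem~\ref{distinct}; then $\rho$ is a bijection by composition and is weight-preserving by the two statistic computations.

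The technical heart — and the step I expect to be the main obstacle — is pinning down the bijection $R_{n-1,0}(122)\to D_{n-2}$ underlying the identity $\RB_{n,0}(122)=\prod_{i=1}^{n-2}(1+q^i)$ from Theorem~\ref{distinct} explicitly enough to (a) see that inserting the extra $2$ back in ``at position $j-1$'' for the appropriate $j=S(\,\cdot\,,w')$-type index produces exactly one copy of $122$, and (b) check that this insertion changes $\rb$ by precisely $b-1$ and nothing else. Unlike the $112$ case, where Sagan's characterization (Lemma~\ref{sagan112}) forces a unique legal ordering, here the freedom lies in where the letters $2$ sit relative to the large distinct letters, so I must be careful that the reinsertion position is well defined and that the ``one copy of $122$'' condition is neither under- nor over-counted. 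Once that bookkeeping is done, showing $\rho$ is a bijection is routine (explicit two-sided inverse via deletion of the extra $2$ and application of $\varphi$-type inverses), and the statistic identity $\rb(w)=\ls(\rho(w))$ then gives $\RB_{n,1}(122)=\LS_{n,1}(112)$ coefficient by coefficient.
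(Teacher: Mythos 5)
Your overall plan---delete the ``extra'' letter, record one integer, land in $A_{n-2}$, and compose with $\psi^{-1}$---does not transfer from the $112$ setting to the $122$ setting, and the failure is structural, not just bookkeeping. In the $112$ case the strategy works because Lemma~\ref{lem112card} forces a \emph{unique} legal reinsertion position for each value $b\in[m-1]$, so the fiber of the deletion map over a fixed $w'\in R_{n-1,0}(112)$ has exactly $\ell(\lambda)=m-1$ elements with $\ls$-increments $0,1,\dots,m-2$, giving $q^{|\lambda|}[\ell(\lambda)]_q$ fiber by fiber. For $122$ the repeated letter is always a $2$ and the freedom lies in its \emph{position}: every one of the $n-2$ slots after the first $2$ in $w'$ is a legal reinsertion point producing exactly one copy of $122$, so the fiber size is $n-2$ regardless of $w'$, not $\ell(\lambda)$. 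Moreover the fiber is not $[\,\cdot\,]_q$-distributed. Concretely, take $w'=12113\in R_{5,0}(122)$ with $\rb(w')=5$; the four reinsertions $122113$, $121213$, $121123$, $121132$ have $\rb$ equal to $6,7,8,7$ respectively, i.e.\ the fiber contributes $q^5(q+2q^2+q^3)$, which is not of the form $q^{|\lambda|}[\ell]_q$ for any $\ell$. In particular $121213$ and $121132$ delete to the same $w'$ and have the same $\rb$, so no choice of auxiliary integer $b$ (position among letters $\geq 2$, right-bigger count of the deleted $2$, or the total $\rb$-increment) makes $w\mapsto(\lambda,b-1)$ injective into $A_{n-2}$. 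Two further claims in your outline are also false as stated: deleting the second $2$ changes the $\rb$ of every $1$ lying between the two $2$'s (each such $1$ loses the value $2$ from its right-bigger set), so ``the rest of $\rb(w)$ equals $\rb(w')$'' fails; and the image of the deletion map is only the half of $R_{n-1,0}(122)$ whose second letter is $2$, so you do not even reach all of $D_{n-2}$. The identity $\RB_{n,1}(122)=\sum_{\lambda\in D_{n-2}}q^{|\lambda|}[\ell(\lambda)]_q$ is true only in aggregate, not fiberwise over deletion classes.

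The paper avoids all of this by not passing through partitions at all: it constructs a direct bijection $\eta\colon R_{n,1}(112)\to R_{n,1}(122)$ with $\ls(w)=\rb(\eta(w))$. Starting from $w=12\cdots m\,w'$ as in Lemma~\ref{lem112card}, one writes down the initial run $12\cdots m$, places the second $2$ immediately after the letter $m-b+1$, and records each remaining tail letter $w_i$ as a $1$ placed after the letter $m-w_i+1$; each letter of $w$ is then paired with a letter of $\eta(w)$ contributing the same amount ($w_i-1$) to the respective statistic. If you want to salvage your route, you would need a reinsertion rule that is forced (one legal position per value of $b$), which is exactly what the value-complementation in $\eta$ provides; I recommend proving the proposition by exhibiting such a direct statistic-preserving bijection between the two containment classes rather than through $A_{n-2}$.
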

\begin{proof}
We will construct a bijection $\eta: R_{n,1}(112) \rightarrow R_{n,1}(122)$ such that $\ls(w) = \rb(\eta(w))$ for $w=w_1\dots w_n \in R_{n,1}(112)$. Recall that by Lemma~\ref{lem112card}, $w = 123\dots m w'$, where $w'$ has one noninversion. Let $abc$ denote the copy of 112 in $w$ and let $m$ be the maximum of $w$. Begin with the initial run $12\dots m$. Place a 2 to the right of $m-b+1$ in the initial run. Denote the word we have created so far as $v = v_1\dots v_{m+1}$. Then, for each $w_i$, where $i>n$ and $w_i\neq b$, place a 1 to the right of $v_{m+2-w_i}$. We will let $\eta(w)$ be the resulting word.

Continuing our running example,
\begin{align*}
w &= 123456653232,\\
v &= 1232456,\\
\eta(w) &= 121312411516.
\end{align*}

To see that $\eta$ is well-defined, first observe that because $w$ contains the element 2, $m\geq 2$. Therefore $\eta(w)$ contains exactly two 2s and thus $\eta(w)$ contains a copy of 122. In addition, note that because $1\leq w_i \leq m$ we have $2\leq m+2-w_i \leq m+1$. Thus in the final step of the construction of $\eta(w)$ we only place 1s to the right of indices $i> 1$ and so $\eta(w)$ begins 12. Therefore, $\eta(w)$ contains exactly one copy of 122. 

To see that $\eta$ is invertible, first note that we can easily recover the value of every element of $w$. The elements greater than 2 are not effected by the bijection and we can obtain the value of the corresponding element of a 1 or the second 2 by counting the number of elements greater than 1 to the right of the 1 or 2. Lastly, note that once we have found the values of every element in $w$, there is exactly one way to order them to guarantee that $w$ contains 112 exactly once.

We now show that $\ls(w) = \rb(\eta(w))$. We will pair each element $w_i$ with an element $\eta(w_i)$ such that $\ls(w_i) = \rb(\eta(w_i))$. Recall that in any RGF $v$, we have $\ls(v_i) = v_i-1$. First note that by construction, $\max(w) = \max (\eta(w)) = m$. Thus we can pair each element $w_i$ of the initial run of $w$ with the first occurrence of $m-w_i +1$ in $\eta(w)$. For each of these elements we have $\ls(w_i) = \rb(\eta(w_i)) = w_i -1$. We can pair $b$ with the second 2 in $\eta(w)$. Because we place the $2$ to the right of $m-b+1$, there are exactly $b-1$ elements to the right of and bigger than the $2$. Similarly, for any $w_i \in w'$ where $w_i\neq b$, we pair up $w_i$ with the 1 to the right of $m-w_i+1$ and for such $w_i$ we have $\rb(\eta(w_i)) = w_i -1$. Therefore we have $\ls(w) = \rb(\eta(w))$.

\end{proof}
Combining Propositions \ref{ls112prop} and \ref{rb122prop} we obtain Theorem \ref{112thm}.

\begin{corollary} The total number of parts in all partitions with distinct parts that are at most $n-2$ is $(n-2)2^{n-3}$.
\end{corollary}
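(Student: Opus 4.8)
The plan is to derive this purely enumeratively by combining two facts already established in the excerpt. The quantity ``total number of parts in all partitions with distinct parts that are at most $n-2$'' is precisely $\sum_{\lambda \in D_{n-2}} \ell(\lambda)$, which is obtained from the generating function $\sum_{\lambda \in D_{n-2}} q^{|\lambda|}[\ell(\lambda)]_q$ by setting $q=1$: indeed $[\ell(\lambda)]_q$ evaluates to $\ell(\lambda)$ at $q=1$, and $q^{|\lambda|}$ evaluates to $1$. By Theorem~\ref{112thm} (equivalently Proposition~\ref{ls112prop}), this generating function equals $\LS_{n,1}(112)$, whose value at $q=1$ is simply $\#R_{n,1}(112)$, since $\LS_{n,1}(112) = \sum_{w \in R_{n,1}(112)} q^{\ls(w)}$.

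So the key steps, in order, are: first, observe that evaluating the polynomial identity of Theorem~\ref{112thm} at $q=1$ gives $\#R_{n,1}(112) = \sum_{\lambda\in D_{n-2}} \ell(\lambda)$; second, recognize the right-hand side as the total number of parts across all distinct-part partitions bounded by $n-2$; third, invoke Theorem~\ref{112card} which states $\#R_{n,1}(112) = (n-2)2^{n-3}$. Chaining these equalities yields the claim. One could alternatively evaluate the closed form $q[n-2]_q\prod_{i=2}^{n-2}(1+q^i)$ at $q=1$ directly: $[n-2]_q \to n-2$ and each factor $(1+q^i) \to 2$, of which there are $n-3$, giving $(n-2)2^{n-3}$ again — this provides a nice consistency check but the cleanest write-up routes through $\#R_{n,1}(112)$.

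There is essentially no obstacle here; the corollary is a one-line specialization. The only minor care needed is the edge behavior: for $n=3$ we have $D_1 = \{(1)\}$ with total parts $1 = (3-2)2^{0}$, and for $n=2$ the statement is vacuous (empty partition has no parts, and $(n-2)2^{n-3}=0$), consistent with the running assumption $n\geq 3$ under which Theorem~\ref{112thm} was proved. I would simply remark that the result follows by setting $q=1$ in Theorem~\ref{112thm} and applying Theorem~\ref{112card}, and leave it at that.
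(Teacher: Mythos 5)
Your proposal is correct and follows exactly the paper's own route: set $q=1$ in Theorem~\ref{112thm} so that $\LS_{n,1}(112)$ becomes $\#R_{n,1}(112)=(n-2)2^{n-3}$ by Theorem~\ref{112card}, while the partition side becomes $\sum_{\lambda\in D_{n-2}}\ell(\lambda)$, the total number of parts. The extra consistency check via the closed form and the edge cases are fine but not needed.
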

\begin{proof} If we let $q=1$ in the above theorem then $\LS_{n,1}(112)$ becomes $\#R_{n,1}(112) = (n-2)2^{n-3}$. However Theorem~\ref{112thm} shows that this is equivalent to 
$$\sum_{\lambda \in D_{n-2} }1^{|\lambda|}[\ell(\lambda)]_1 = \sum_{\lambda\in D_{n-2}} \ell(\lambda).$$
\end{proof}

\subsection{Partitions with length and part size restrictions}
In this subsection, we examine generating functions related to integer partitions with length and part size restrictions.
Recall our introductory $q$-analog example
$$[n]_q := 1+q+q^2+\dots+q^{n-1}.$$ Using this, we can construct a $q$-analog of the factorial function
$$[n]_q! := [n]_q[n-1]_q[n-2]_q\dots[2]_q[1]_q.$$ The fun doesn't stop there, however. Some of the most well-known $q$-analogs are the \emph{Gaussian Binomial Coefficients}
$$\gauss{n}{k}_q := \frac{[n]_q!}{[k]_q![n-k]_q!}.$$ There are many combinatorial interpretations for these polynomials. We will focus on the following well known interpretation.
\begin{proposition}
The generating function for integer partitions $\lambda$ with $\ell(\lambda) \leq s$ and $\lambda_1 \leq t$ is given by
$$\gauss{s+t}{s}_q,$$ where the coefficient of $q^k$ is the number of such partitions of $k$.
\end{proposition}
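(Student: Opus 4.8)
The plan is to prove this by establishing a recurrence for the generating function $G_{s,t}(q) := \sum_\lambda q^{|\lambda|}$, where the sum runs over integer partitions $\lambda$ with $\ell(\lambda) \le s$ and $\lambda_1 \le t$, and then show that $\gauss{s+t}{s}_q$ satisfies the same recurrence with the same initial conditions. First I would set up the base cases: if $s = 0$ the only such partition is the empty partition, so $G_{0,t}(q) = 1 = \gauss{t}{0}_q$, and symmetrically $G_{s,0}(q) = 1 = \gauss{s}{s}_q$.

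For the recursive step, I would partition the set of relevant $\lambda$ according to whether $\lambda_1 < t$ or $\lambda_1 = t$. Those with $\lambda_1 \le t-1$ are exactly the partitions counted by $G_{s,t-1}(q)$. Those with $\lambda_1 = t$ have a guaranteed first part of size $t$; removing that part leaves a partition with at most $s-1$ parts and largest part at most $t$, and this removal subtracts $t$ from $|\lambda|$, so these contribute $q^t G_{s-1,t}(q)$. Hence
$$G_{s,t}(q) = G_{s,t-1}(q) + q^t\, G_{s-1,t}(q).$$
Then I would verify the matching Pascal-type identity for Gaussian binomial coefficients,
$$\gauss{s+t}{s}_q = \gauss{s+t-1}{s}_q + q^t \gauss{s+t-1}{s-1}_q,$$
which is standard and follows by clearing denominators in the definition via $[s+t]_q = [s]_q + q^s[t]_q$ (or the companion identity $[s+t]_q = [t]_q + q^t[s]_q$); alternatively one can cite it as the well-known $q$-Pascal recurrence. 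An induction on $s+t$ then finishes the argument. I would also remark that the ``coefficient of $q^k$'' statement is immediate once the generating-function identity is in hand, since $G_{s,t}(q)$ is by definition $\sum_k c_k q^k$ with $c_k$ the number of such partitions of $k$.

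An alternative, more visual approach would be to use Young diagrams: a partition with $\ell(\lambda)\le s$ and $\lambda_1 \le t$ is precisely a Young diagram fitting inside an $s \times t$ box, and such diagrams correspond bijectively to lattice paths from one corner of the box to the opposite corner, where the area above the path records $|\lambda|$; counting these paths by area is the classical combinatorial interpretation of $\gauss{s+t}{s}_q$. I would likely present the recurrence proof as the main argument since it is self-contained and short, mentioning the lattice-path picture only as intuition. The main obstacle, such as it is, is purely bookkeeping: being careful that the two cases $\lambda_1 < t$ and $\lambda_1 = t$ genuinely partition the set (in particular handling the empty partition, which has $\lambda_1 = 0 \le t-1$ when $t \ge 1$, so it falls in the first case and is not double-counted) and that the chosen $q$-Pascal identity matches the chosen case split rather than its transpose.
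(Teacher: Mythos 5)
Your proof is correct. Note that the paper itself offers no proof of this proposition --- it is stated as a ``well known interpretation'' of the Gaussian binomial coefficients and simply cited --- so there is nothing to compare against; your recurrence argument supplies a complete, self-contained justification. The case split on $\lambda_1 \le t-1$ versus $\lambda_1 = t$ is clean, the base cases $G_{0,t}(q) = G_{s,0}(q) = 1$ are right, and the $q$-Pascal identity $\gauss{s+t}{s}_q = \gauss{s+t-1}{s}_q + q^t \gauss{s+t-1}{s-1}_q$ does match your decomposition: it follows by clearing denominators from $[s+t]_q = [t]_q + q^t[s]_q$, exactly as you indicate. Your attention to which of the two transposed $q$-Pascal identities pairs with which case split, and to the placement of the empty partition, is the right bookkeeping to worry about, and the induction on $s+t$ closes the argument. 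The lattice-path/Young-diagram remark is an accurate description of the other standard proof.
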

One immediate consequence of this proposition is the surprising fact that the coefficients of the Gaussian polynomials are always integers even though polynomial division is involved.

Campbell et. al. proved the following connection between these polynomials and the generating functions we've been studying.
\begin{theorem}[\cite{ddggprs:rgf}]\label{122prev}
We have 
$$\LS_{n,0}(122) = \sum_{t\geq 0} \gauss{n-1}{t}_q.$$
\end{theorem}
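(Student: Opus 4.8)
The plan is to prove the displayed identity directly from the structure of $R_{n,0}(122)$ together with the partition interpretation of $\gauss{s+t}{s}_q$ recorded just above. First I would pin down the shape of a $122$-avoider. If any value $a\ge 2$ occurred twice, say at positions $j<k$, then together with the leading letter $w_1=1<a$ the triple $w_1w_jw_k$ would be a copy of $122$; hence in a $122$-avoider only the value $1$ may repeat, and each of $2,3,\dots,m$ (where $m=\max w$) occurs exactly once. The RGF growth rule forces these single occurrences into increasing order, so $w$ is completely determined by interleaving the increasing run $2\,3\cdots m$ with the $n-m$ extra copies of $1$ among positions $2,\dots,n$; equivalently by choosing the $m-1$ positions (out of $n-1$) occupied by the run. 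This also reproves $\#R_{n,0}(122)=\sum_m\binom{n-1}{m-1}=2^{n-1}$ and sets up the $q$-count.

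Next I would evaluate the relevant left statistic on such a word and recognise a $q$-binomial. Each letter of the increasing run has no larger letter to its left, so it contributes $0$; the leading $1$ also contributes $0$; and each of the $n-m$ extra $1$'s contributes the number of run-letters lying to its left. Thus the weight of $w$ is exactly the number of pairs (run-letter, extra $1$) in which the run-letter comes first, i.e. the inversion number of the shuffle of $m-1$ ``large'' and $n-m$ ``small'' symbols. It is worth checking at the outset which left statistic this is: the quantity just described is $\lb$, since it records larger entries to the left of the repeated $1$'s, whereas $\ls$ is constant equal to $\binom{m}{2}$ on every word of maximum $m$ and hence cannot produce the Gaussian polynomials; the Gaussian-binomial right-hand side is therefore the left-bigger generating function. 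By the interpretation of $\gauss{s+t}{s}_q$ stated just above, the shuffles of $m-1$ large and $n-m$ small symbols, weighted by this inversion number, are enumerated by $\gauss{(m-1)+(n-m)}{m-1}_q=\gauss{n-1}{m-1}_q$, the generating function for partitions with at most $m-1$ parts each at most $n-m$.

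Finally I would sum over the maximum. Collecting the contributions of all maxima $m=1,\dots,n$ and reindexing by $t=m-1$ gives
$$\sum_{m=1}^{n}\gauss{n-1}{m-1}_q=\sum_{t\ge 0}\gauss{n-1}{t}_q,$$
which is the claimed right-hand side, the terms with $t>n-1$ vanishing automatically.

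I expect the main obstacle to be the middle step: making precise the identification of the shuffle's inversion statistic with $\gauss{n-1}{m-1}_q$. Concretely I would set up the bijection sending a shuffle to the partition whose $i$-th part is the number of large symbols preceding the $i$-th small symbol, verify that it lands in the $(m-1)\times(n-m)$ box, and check that it is weight-preserving and invertible. The remaining bookkeeping — that the leading $1$ and every run-letter contribute $0$, and that the final sum ranges exactly over $t=0,\dots,n-1$ — is then routine.
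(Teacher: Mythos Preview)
The paper does not actually supply a proof of this theorem; it is quoted from \cite{ddggprs:rgf} as background for the author's own Theorem~\ref{lb122}, so there is no in-paper argument to compare against.

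More importantly, you have correctly diagnosed a typo in the statement as printed: the identity is false for $\LS$ and true for $\LB$. Your computation makes this explicit. Since every letter of an RGF satisfies $\ls(w_i)=w_i-1$, a $122$-avoider with maximum $m$ has $\ls(w)=\binom{m}{2}$ regardless of where the extra $1$'s sit, giving
\[
\LS_{n,0}(122)=\sum_{m=1}^{n}\binom{n-1}{m-1}q^{\binom{m}{2}},
\]
which already at $n=3$ equals $1+2q+q^3\neq 3+q=\sum_t\gauss{2}{t}_q$. The surrounding text of the paper (which passes from this result to $\LB_{n,1}(122)$ in Theorem~\ref{lb122}) confirms that $\LB$ is intended.

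Your proof of the corrected statement is sound. The characterisation of $R_{n,0}(122)$ as shuffles of the increasing run $2\cdots m$ with $n-m$ extra $1$'s is right, and your identification of $\lb(w)$ with the inversion number of that shuffle is exactly what yields $\gauss{n-1}{m-1}_q$ for the fixed-$m$ contribution. The bijection you outline---sending a shuffle to the partition whose $i$th part is the number of run letters preceding the $i$th extra $1$---lands in the $(n-m)\times(m-1)$ box and is weight-preserving; this is the standard proof of the partition interpretation of the Gaussian coefficient, so the ``main obstacle'' you flag is routine. Summing over $m$ then gives $\sum_{t\ge 0}\gauss{n-1}{t}_q$ as claimed.
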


As we move from $k=0$ to $k=1$, we find a similar phenomenon as in $\LS_{n,1}(112)$ occurs. We get a variation of Theorem~\ref{122prev} where the new generating function is related to the individual parts of the partitions counted by $\LS_{n,0}(122)$.
\begin{theorem}\label{lb122} We have that 

$$\LB_{n,1}(122) = \sum_{k\geq0} \#B_{n,k+1}q^{k},$$
 
 where $B_{n,k} = \{(\lambda, i) : \lambda = (\lambda_1,\dots, \lambda_m) \vdash k, 1\leq i \leq m, \text{ and } \ell(\lambda)+\lambda_1+1\leq n\}.$
 
\end{theorem}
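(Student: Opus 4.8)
The plan is to construct a weight-preserving bijection $\Phi$ from $R_{n,1}(122)$ onto $\bigsqcup_{k\ge 0}B_{n,k+1}$ sending a word $w$ with $\lb(w)=k$ to a pair $(\lambda,i)$ with $|\lambda|=k+1$; summing $q^{\lb(w)}=q^{|\lambda|-1}$ over all $w$ then gives the theorem.

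The first step is to sharpen Lemma~\ref{122lemma}. If $w\in R_{n,1}(122)$ has maximum $m$, then $w$ begins $12$, the value $2$ occurs exactly twice, each of $3,\dots,m$ occurs exactly once, and — since $w$ is an RGF — the values $3,4,\dots,m$ occur in increasing order of position; hence $w$ has exactly $n-m$ copies of $1$, one of which is $w_1$. Writing $2=f_2<f_3<\dots<f_m$ for the positions of the first occurrences of $2,3,\dots,m$ and calling the positions strictly between $f_g$ and $f_{g+1}$ (with $f_{m+1}:=n+1$) \emph{region} $g$, each region $g$ with $2\le g\le m$ contains some number $a_g\ge 0$ of the ``extra'' copies of $1$, and exactly one region $g_0$ also contains the second copy of $2$. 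Every entry of $w$ besides the extra $1$'s and the second $2$ contributes $0$ to $\lb(w)$: this is immediate for $w_1$ and the first $2$, and for a value $v\ge 3$ it holds because the only larger values $v+1,\dots,m$ all occur after $f_v$. An extra $1$ in region $g$ sees exactly the distinct values $2,\dots,g$ to its left and contributes $g-1$, while the second $2$ in region $g_0$ contributes $g_0-2$, so
$$\lb(w)=\sum_{g=2}^{m}a_g(g-1)+(g_0-2).$$

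Now define $\Phi(w)=(\lambda,i)$: let $\lambda$ be the partition with $a_g$ parts equal to $g-1$ for each $g$ together with one additional part equal to $g_0-1$, and let $i$ be the position in $\lambda$ (sorted weakly decreasingly) of the $j$-th part equal to $g_0-1$, where the second $2$ is the $j$-th of the $a_{g_0}+1$ entries of region $g_0$ read left to right. Then $\ell(\lambda)=\big(\sum_g a_g\big)+1=n-m$, all parts are $\le m-1$, so $\ell(\lambda)+\lambda_1+1\le n$, and $|\lambda|=\sum_g a_g(g-1)+(g_0-1)=\lb(w)+1$; thus $(\lambda,i)\in B_{n,\lb(w)+1}$. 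The inverse recovers $m=n-\ell(\lambda)$ (with $2\le m\le n-1$, and $\lambda_1\le m-1$ exactly by the defining inequality of $B_{n,k}$), sets $g_0=\lambda_i+1$, reads off $j$ as the rank of $i$ among indices of parts of size $\lambda_i$, recovers the $a_g$'s from the part multiplicities (with $a_{g_0}$ one less than the multiplicity of $g_0-1$), and reassembles $w=12\,R_2\,3\,R_3\,4\cdots m\,R_m$, where $R_g$ is a block of $a_g$ ones except that $R_{g_0}$ carries a $2$ in its $j$-th slot; Lemma~\ref{122lemma} confirms this word lies in $R_{n,1}(122)$, and the two maps are visibly mutually inverse.

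I expect the main obstacle to be the bookkeeping around the index $i$: proving that it simultaneously encodes, reversibly, both which region holds the second $2$ and that entry's precise location among the $1$'s of that region, so that $\Phi$ is genuinely a bijection onto $[\ell(\lambda)]$ — together with handling the boundary conventions (the entry $w_1$, the trailing region $m$, and the degenerate case $m=2$ where there are no big values).
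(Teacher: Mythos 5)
Your argument is correct and takes essentially the same route as the paper: the paper's bijection also sends each non-initial occurrence of a $1$ or $2$ at position $j$ to the part $\max(w_1\cdots w_{j-1})-1$, which is exactly your $g-1$ for an entry in region $g$, and records the location of the second $2$ as the auxiliary index, yielding $\lb(w)=|\lambda|-1$ just as you compute. The only difference is cosmetic: the paper encodes that location as the index of the $2$ within the left-to-right subword of repeated entries, whereas you encode it as a position in the sorted partition via the rank $j$; both conventions give a valid bijection onto $[\ell(\lambda)]$.
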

\begin{proof} We will construct a bijection $\tau: R_{n,1}(122)\rightarrow \bigcup_{k\geq0} B_{n,k+1}$ and show that $\tau(w) \in B_{n,k+1}$ if and only if $\lb(w) = k$.

 Let $w\in R_{n,1}(122)$ and let $\tau(w) = (\lambda, i)$. We define $\tau$ in the following way. Consider all $w_j = 1$ or $2$, where $w_j$ is not the first occurrence of that element and let $a = a_1\dots a_m$ be the subword consisting of the $w_j$. For each such $w_j$, create the part $(\max(w_1\dots w_{j-1})-1)$ in $\lambda$. By the characterization of $R_{n,1}(122)$, $a$ consists of exactly one 2 and $(m-1)$ copies of 1. We then let $i$ correspond to the index of the 2 in $a$. For example, $\tau(12\bm{1}3\bm{21}4) = ( (2,2,1), 2),$ where the subword $a$ is in bold. We can see that by construction $\ell(\lambda) + \lambda_1 \leq \max(w)\leq n-1$ and thus $\tau$ is well-defined.

Let $(\lambda,i)\in \bigcup_{k\geq 0} B_{n,k+1}$. To construct $w = \tau^{-1} (\lambda, i)$, we begin by creating an initial run $1\dots (\lambda_1+1)$. For each part $\lambda_j$, we insert a 1 to the right of the element $\lambda_j+1$ in the initial run $w$. If $(\lambda_1+ \ell(n)) < (n-1)$, we append the sequence $(\lambda_1+2) (\lambda_1+3) \dots (n-\ell(\lambda))$ to the end of $w$. Lastly, we change the $(i+1)$st copy of 1 in $w$ to a 2. As $\tau^{-1}$ is essentially $\tau$ in reverse, we leave the reader to check that they are indeed inverses.

Examining the construction of $\tau(w)$, we can see that only the elements that make up the subword $a$ contribute to $\lb(w)$. If we let $\lambda_q$ be the part of $\lambda$ corresponding to $a_q$ then we can see that $\lb(a_q) = \lambda_q - a_q +1$. As there is only one 2 in $a$, it must be the case that $\lb(w) = |\lambda| -1$. 
\end{proof}


\begin{corollary} The number of parts of all partitions that fit in a $t\times s$ box where $t+s\leq n-1$ is $(n-2)2^{n-3}$.
\end{corollary}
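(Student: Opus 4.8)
The plan is to specialize Theorem~\ref{lb122} at $q = 1$, exactly in the way the preceding corollary specialized Theorem~\ref{112thm}. Setting $q = 1$ in the identity $\LB_{n,1}(122) = \sum_{k\geq 0}\#B_{n,k+1}q^{k}$ collapses the left-hand side to $\#R_{n,1}(122)$, which equals $(n-2)2^{n-3}$ by Theorem~\ref{122card}, and collapses the right-hand side to $\sum_{k\geq 0}\#B_{n,k+1} = \sum_{j\geq 1}\#B_{n,j}$ after reindexing $j = k+1$.

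Next I would unpack $\sum_{j\geq 1}\#B_{n,j}$. By definition $\bigcup_{j\geq 1}B_{n,j}$ consists of all pairs $(\lambda,i)$ where $\lambda = (\lambda_1,\dots,\lambda_m)$ is a nonempty partition with $\ell(\lambda) + \lambda_1 + 1 \leq n$ and $1 \leq i \leq m = \ell(\lambda)$. Grouping these pairs by their first coordinate, each admissible $\lambda$ contributes exactly $\ell(\lambda)$ pairs, so
$$\sum_{j\geq 1}\#B_{n,j} = \sum_{\substack{\lambda \neq \emptyset \\ \ell(\lambda)+\lambda_1 \leq n-1}} \ell(\lambda).$$
Since $\ell(\emptyset) = 0$, the restriction $\lambda \neq \emptyset$ may be dropped without affecting the sum, so this equals $\sum_{\lambda\,:\,\ell(\lambda)+\lambda_1 \leq n-1}\ell(\lambda)$.

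Finally I would reconcile this index condition with the statement about boxes. A partition $\lambda$ fits inside a $t\times s$ box precisely when $\ell(\lambda) \leq t$ and $\lambda_1 \leq s$; hence $\lambda$ fits in \emph{some} $t\times s$ box with $t+s \leq n-1$ if and only if $\ell(\lambda)+\lambda_1 \leq n-1$ (take $t = \ell(\lambda)$, $s = \lambda_1$ for the forward direction, and the converse is immediate). Therefore $\sum_{\lambda\,:\,\ell(\lambda)+\lambda_1 \leq n-1}\ell(\lambda)$ is exactly the total number of parts over all partitions that fit in such a box, and equating the two evaluations of Theorem~\ref{lb122} at $q = 1$ yields that this total is $(n-2)2^{n-3}$.

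I do not expect a genuine obstacle here: the argument is a routine $q = 1$ specialization. The only points requiring a little care are the reindexing of the sum together with the grouping of the pairs $(\lambda,i)$ by $\lambda$, and the translation between the phrase ``fits in a $t\times s$ box with $t+s\leq n-1$'' and the inequality $\ell(\lambda)+\lambda_1 \leq n-1$ (plus the harmless inclusion of the empty partition).
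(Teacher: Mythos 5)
Your proposal is correct and is essentially the paper's own proof: the paper also just sets $q=1$ in Theorem~\ref{lb122} to equate $\#R_{n,1}(122) = (n-2)2^{n-3}$ with $\sum_{k\geq 0}\#B_{n,k+1}$. You simply spell out the bookkeeping (grouping the pairs $(\lambda,i)$ by $\lambda$ and translating the box condition into $\ell(\lambda)+\lambda_1\leq n-1$) that the paper leaves implicit.
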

\begin{proof}
Letting $q=1$ in $\LB_{n,1}(122)$ gives $\# R_{n,1}(122) = (n-2)2^{n-3}$. However Theorem \ref{lb122} shows that this is equivalent to 
$$\sum_{k\geq 0} \# B_{n,k+1}.$$
\end{proof}
\begin{corollary}
The number of parts of all partitions that fit in a $t\times s $ box where $t+s\leq n-1$ is equal to the number of parts in all partitions with distinct parts at most $n-2$.
\end{corollary}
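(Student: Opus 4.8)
The plan is to deduce this entirely from the two preceding corollaries, each of which evaluates its count to $(n-2)2^{n-3}$; no new combinatorics is needed. First I would observe that evaluating the generating function of Theorem~\ref{112thm} at $q=1$ turns $\LS_{n,1}(112)$ into $\sum_{\lambda\in D_{n-2}}\ell(\lambda)$, which is precisely the total number of parts among all integer partitions into distinct parts of size at most $n-2$. Second, I would observe that evaluating the generating function of Theorem~\ref{lb122} at $q=1$ turns $\LB_{n,1}(122)$ into $\sum_{k\ge 0}\#B_{n,k+1}$; unwinding the definition of $B_{n,k}$, this sum counts pairs consisting of a partition $\lambda$ that fits in a $t\times s$ box with $t+s\le n-1$ together with a choice of one of the parts of $\lambda$ — that is, the total number of parts among all such box-partitions.

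Next I would invoke Theorems~\ref{112card} and~\ref{122card}, which give $\#R_{n,1}(112)=\#R_{n,1}(122)=(n-2)2^{n-3}$. Since setting $q=1$ in any generating function $\sum_{w}q^{\mathrm{stat}(w)}$ just counts the underlying set, the value of $\LS_{n,1}(112)$ at $q=1$ is $(n-2)2^{n-3}$ and so is the value of $\LB_{n,1}(122)$ at $q=1$. Chaining this with the two identities of the previous paragraph yields $\sum_{\lambda\in D_{n-2}}\ell(\lambda)=\sum_{k\ge 0}\#B_{n,k+1}$, which is exactly the claim.

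There is no genuine obstacle: the statement is just the $q=1$ shadow of the equality $\LS_{n,1}(112)=\RB_{n,1}(122)$ from Theorem~\ref{112thm} combined with the fact that the $\RB$- and $\LB$-generating functions over $R_{n,1}(122)$ agree at $q=1$ (both collapse to $\#R_{n,1}(122)$). The only point that deserves a sentence of care is the bookkeeping in the first step, namely that $B_{n,k}$ as defined in Theorem~\ref{lb122} really does enumerate "marked parts of box-partitions of $k$", and this is immediate from its definition. If one instead wanted a self-contained, bijective proof, the natural route would be to exhibit a direct bijection between $A_{n-2}$ (a distinct-part partition of size $\le n-2$ with a marked part) and $\bigcup_{k\ge0}B_{n,k+1}$ (a box-partition with $t+s\le n-1$ and a marked part): composing $\psi^{-1}$ from Proposition~\ref{ls112prop}, $\eta$ from Proposition~\ref{rb122prop}, and $\tau$ from Theorem~\ref{lb122} produces precisely such a bijection, and the only nontrivial verification is that this composite sends marked parts to marked parts.
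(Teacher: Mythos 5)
Your proposal is correct and matches the paper's (implicit) argument exactly: both quantities equal $(n-2)2^{n-3}$ by the two immediately preceding corollaries, which are themselves the $q=1$ evaluations of Theorems~\ref{112thm} and~\ref{lb122}. The remark about a composite bijection is a nice extra, but the paper does not pursue it and it is not needed for the statement.
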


The above corollaries show one of the powers of using generating functions. In examining the finer structure of the generating function, it is possible to develop new unexpected insights into the objects being studied. 
\section{Sperner Posets and Unimodality}\label{unimsec}
\subsection{Symmetry and unimodality}
After obtaining the characterizations of generating functions, it is interesting to determine if they satisfy any nice properties. Two commonly studied properties are symmetry and unimodality.

A polynomial $p$ is called \emph{symmetric} if the $i$th coefficient of $p$ is equal to the $(\deg(p)-i)$th coefficient of $p$. This is not to be confused with another common definition of symmetric (multivariate) polynomials, which involves invariance under permutations of its arguments.

A finite sequence of integers $a_1,\dots, a_n$ is called \emph{unimodal} if there exists an index $m$ such that 
\begin{align*}
1.&\hspace{10pt} a_1\leq a_2\leq a_3\leq \dots \leq a_m\\
2.&\hspace{10pt} a_m\geq a_{m+1} \geq a_{m+2} \geq \dots \geq a_n.
\end{align*}
A polynomial is unimodal if its coefficients are unimodal.

Stanley \cite{sta:uni} gives the following proposition, which will be of use to us.
\begin{proposition}[\cite{sta:uni}, Proposition 1]\label{stansym}
If $A(q)$ and $B(q)$ are symmetric, unimodal polynomials with nonnegative coefficients, then so is $A(q)B(q)$.
\end{proposition}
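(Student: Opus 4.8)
The plan is to establish three facts about the product $C(q) = A(q)B(q)$: that its coefficients are nonnegative, that $C$ is symmetric, and that $C$ is unimodal. The first is immediate, since the coefficients of $C$ are finite sums of products of nonnegative numbers. For symmetry, I would write $d_A = \deg A$, $d_B = \deg B$, so $\deg C = d_A + d_B$, and use that $[q^i]A = [q^{d_A - i}]A$ and likewise for $B$; then a direct computation of $[q^j]C = \sum_{i} ([q^i]A)([q^{j-i}]B)$ and the substitution $i \mapsto d_A - i$ shows $[q^j]C = [q^{d_A+d_B-j}]C$. These two steps are routine bookkeeping and I would present them briefly.

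The substantive step is unimodality of the product. The key reduction I would use is that a \emph{symmetric} unimodal polynomial with nonnegative coefficients can be written as a nonnegative linear combination of the ``atomic'' symmetric unimodal pieces
$$
(q^a + q^{a+1} + \dots + q^{b}) \quad \text{with } a+b = d_A,
$$
i.e. of terms $q^a[b-a+1]_q$ centered at $d_A/2$; concretely, if $A$ has coefficients $c_0 \le c_1 \le \dots \le c_{\lceil d_A/2\rceil} \ge \dots \ge c_{d_A}$, then $A(q) = \sum_{a} (c_a - c_{a-1})\,(q^a + \dots + q^{d_A - a})$ where the sum runs over $a \le d_A/2$ and the coefficients $c_a - c_{a-1}$ are nonnegative. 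Since the class of symmetric unimodal polynomials with nonnegative coefficients is closed under addition (two symmetric unimodal sequences with the \emph{same} center, added, stay unimodal — the peak indices can be chosen to coincide at the common center), it suffices to prove the statement when both $A$ and $B$ are such atomic pieces. But the product of two such pieces, $q^a[p]_q \cdot q^{a'}[p']_q$, is $q^{a+a'}$ times $[p]_q[p']_q$, and $[p]_q[p']_q$ is a classical symmetric unimodal polynomial (its coefficients are the sizes of the rank levels of a product of two chains, equivalently the number of lattice paths / the coefficients that increase then decrease by a one-line convolution-of-indicators argument). Multiplying by a monomial preserves symmetry and unimodality, so we are done.

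I expect the main obstacle to be the closure-under-addition step done \emph{with matching centers}: one must observe that every symmetric polynomial of degree $d$ has its peak forced to sit at index $d/2$ (or split between $\lfloor d/2 \rfloor$ and $\lceil d/2 \rceil$), so when we add symmetric unimodal polynomials all of the same degree their modes align and the sum is again unimodal; without the symmetry hypothesis this fails, which is exactly why the proposition needs it. The decomposition of a symmetric unimodal polynomial into atomic pieces, and the unimodality of $[p]_q[p']_q$, are both standard and I would cite or sketch them rather than belabor the computation. An alternative route, which I would mention as a remark, is to avoid the decomposition entirely and argue directly: after centering both polynomials at the origin (multiplying by $q^{-d_A/2}$ and $q^{-d_B/2}$, working with Laurent polynomials symmetric under $q \mapsto q^{-1}$), one shows the convolution of two nonneg, symmetric, unimodal sequences is unimodal by a sign-change / discrete-derivative argument on partial sums — but the atomic-decomposition proof is cleaner to write and is the one I would commit to.
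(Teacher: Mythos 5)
Your argument is correct and is precisely the proof the paper is implicitly relying on: the paper states this as Stanley's Proposition 1 without reproducing the argument, and Stanley's own proof is exactly your atomic decomposition of a symmetric unimodal polynomial into nonnegative combinations of $q^a+\cdots+q^{d-a}$, reduction to products of such atoms, and the observation that addition preserves unimodality only because symmetry forces all the modes to sit at the common center. Nothing further is needed.
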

If we examine his proof, we can drop the unimodality condition to get a statement purely about symmetry.
\begin{corollary}
If $A(q)$ and $B(q)$ are symmetric polynomials with nonnegative coefficients, then so is $A(q)B(q)$.
\end{corollary}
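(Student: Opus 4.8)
The plan is to argue directly rather than to transcribe Stanley's proof of Proposition~\ref{stansym}, since unimodality is the only part of that proposition whose proof does real work. Write $A(q) = \sum_i a_i q^i$ and $B(q) = \sum_j b_j q^j$ with all $a_i, b_j \ge 0$, with $a_i = 0$ for $i \notin \{0,\dots,d_A\}$ and $b_j = 0$ for $j \notin \{0,\dots,d_B\}$, and with $a_{d_A}, b_{d_B} \neq 0$ so that $d_A = \deg A$ and $d_B = \deg B$. By the definition used in the paper, symmetry of $A$ and $B$ means $a_i = a_{d_A - i}$ and $b_j = b_{d_B - j}$ for every integer $i, j$. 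Put $C(q) = A(q)B(q) = \sum_k c_k q^k$ with $c_k = \sum_{i+j=k} a_i b_j$; the goal is to show $C$ has nonnegative coefficients and $c_k = c_{(\deg C)-k}$ for all $k$.

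First I would record nonnegativity: each $c_k$ is a finite sum of products of nonnegative reals, hence $c_k \ge 0$. Next I would fix the degree of $C$: its top coefficient is $c_{d_A+d_B} = a_{d_A} b_{d_B} > 0$ (a product of two positive numbers), so $\deg C = d_A + d_B$. This is the one point that needs care, because ``symmetric'' is defined relative to the degree, so the reflection witnessing the symmetry of $C$ must be about $(d_A+d_B)/2$; were a leading coefficient allowed to vanish, the centre of symmetry would move and the identity below would not close. Finally I would verify symmetry via the change of indices $(i,j)\mapsto(d_A-i,\,d_B-j)$:
$$c_{(d_A+d_B)-k} \;=\; \sum_{i+j=(d_A+d_B)-k} a_i b_j \;=\; \sum_{i'+j'=k} a_{d_A-i'}\,b_{d_B-j'} \;=\; \sum_{i'+j'=k} a_{i'} b_{j'} \;=\; c_k,$$
the third equality using the symmetry of $A$ and $B$. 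This is exactly the assertion; the degenerate case where $A$ or $B$ is the zero polynomial is trivial and may be set aside.

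There is no genuine obstacle here: the computation is routine and the only subtlety is the degree bookkeeping just noted. If one instead wants to literally ``examine Stanley's proof,'' the same conclusion drops out by writing each of $A$ and $B$ as a nonnegative combination of \emph{palindromic atoms} $q^i + q^{d-i}$ (together with $q^{d/2}$ when the degree $d$ is even), checking that the product of two such atoms is again symmetric with nonnegative coefficients, and distributing over $AB$ — this is Stanley's argument with the step that invokes unimodality to force a staircase decomposition simply omitted. Equivalently, symmetry of $A$ is the identity $A(q) = q^{d_A} A(1/q)$, and multiplying the two such identities for $A$ and $B$ gives $C(q) = q^{\deg C} C(1/q)$ immediately, after which nonnegativity of the coefficients of $C$ is the elementary observation above.
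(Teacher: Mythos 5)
Your argument is correct. The paper itself does not write out a proof of this corollary: it simply asserts that one can ``examine [Stanley's] proof'' of Proposition~\ref{stansym} and drop the unimodality hypothesis, which implicitly means redoing Stanley's decomposition of a symmetric polynomial with nonnegative coefficients into palindromic pieces (the atoms $q^i + q^{d-i}$, rather than the full blocks $q^i + q^{i+1} + \cdots + q^{d-i}$ that unimodality would provide) and checking that products of such pieces remain symmetric. Your primary route is different and more self-contained: a direct convolution computation, where the change of indices $(i,j)\mapsto(d_A-i,\,d_B-j)$ carries the coefficient $c_{(d_A+d_B)-k}$ to $c_k$. This buys you a proof that stands on its own without reference to Stanley's paper, and your attention to the degree bookkeeping --- that $c_{d_A+d_B}=a_{d_A}b_{d_B}>0$, so the centre of symmetry of the product really is $(d_A+d_B)/2$ --- is exactly the one point where such an argument could silently go wrong, and you handle it correctly (this is also where nonnegativity is genuinely used, since over general coefficients the leading terms could cancel). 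Your closing observation that symmetry is the functional identity $A(q)=q^{\deg A}A(1/q)$, which is visibly multiplicative, is the slickest formulation of all three and arguably the one worth recording; the decomposition-into-atoms sketch you give at the end is the faithful rendering of what the paper actually had in mind.
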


Using the characterization given in Theorem~\ref{112thm} we immediately obtain.
\begin{proposition}
$\LS_{n,1}(112)/q$ is symmetric.
\end{proposition}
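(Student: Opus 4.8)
The plan is to invoke Theorem~\ref{112thm}, which gives the explicit factorization
\[
\LS_{n,1}(112) = q[n-2]_q\prod_{i=2}^{n-2}(1+q^i),
\]
so that
\[
\LS_{n,1}(112)/q = [n-2]_q\prod_{i=2}^{n-2}(1+q^i).
\]
The goal is then to show this polynomial is symmetric. My approach is to check that each factor on the right-hand side is symmetric with nonnegative coefficients and then appeal to the Corollary to Proposition~\ref{stansym}, which says a product of symmetric polynomials with nonnegative coefficients is again symmetric with nonnegative coefficients.

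First I would verify the base facts. Each factor $1+q^i$ is palindromic: its coefficient sequence is $1,0,\dots,0,1$ of length $i+1$, which reads the same forwards and backwards, and its coefficients are nonnegative. Likewise $[n-2]_q = 1+q+\cdots+q^{n-3}$ has all coefficients equal to $1$, hence is symmetric (and nonnegative). So every factor in the product $[n-2]_q\prod_{i=2}^{n-2}(1+q^i)$ is a symmetric polynomial with nonnegative coefficients.

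Next I would apply the Corollary to Proposition~\ref{stansym} inductively (or simply note it extends immediately to any finite product): the product of finitely many symmetric polynomials with nonnegative coefficients is symmetric with nonnegative coefficients. Applying this to the $n-2$ factors $[n-2]_q, 1+q^2, 1+q^3, \dots, 1+q^{n-2}$ yields that $[n-2]_q\prod_{i=2}^{n-2}(1+q^i) = \LS_{n,1}(112)/q$ is symmetric, as desired. (One should note the edge case: for $n=3$ the product over $i$ is empty and $\LS_{3,1}(112)/q = [1]_q = 1$, which is trivially symmetric.)

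I do not anticipate a genuine obstacle here — the statement is essentially a corollary of Theorem~\ref{112thm} and the multiplicativity of symmetry. The only point requiring a moment's care is making sure the division by $q$ is legitimate, i.e.\ that $q$ genuinely divides $\LS_{n,1}(112)$; this is immediate from the factored form in Theorem~\ref{112thm}, where the leading factor is exactly $q$. A secondary subtlety, worth a sentence, is that Proposition~\ref{stansym} and its corollary are stated for two factors, so one invokes them iteratively; since symmetry-with-nonnegative-coefficients is preserved at each step, the induction goes through with no difficulty.
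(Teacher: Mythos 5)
Your proposal is correct and matches the paper's (unstated) argument exactly: the paper derives this proposition ``immediately'' from the factorization in Theorem~\ref{112thm} together with the corollary to Proposition~\ref{stansym}, which is precisely the route you take. Spelling out the palindromicity of each factor, the iteration over finitely many factors, and the $n=3$ edge case is a reasonable elaboration of what the paper leaves implicit.
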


Note, however, that we cannot apply Proposition~\ref{stansym} to prove unimodality because terms such as $1+q^2$ have internal zeros and thus are not unimodal. This leads us to the following conjecture.
\begin{conjecture}\label{conj112}
$\LS_{n,1}(112)$ is unimodal.
\end{conjecture}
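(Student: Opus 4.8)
The plan is to work directly with the explicit product formula from Theorem~\ref{112thm}, namely
$$\LS_{n,1}(112) = q\,[n-2]_q\prod_{i=2}^{n-2}(1+q^i),$$
and to prove unimodality of this polynomial by induction on $n$, leveraging the fact that multiplying a symmetric unimodal polynomial by a factor of the form $(1+q^i)$ is a well-understood operation. The factor $q$ is irrelevant (it only shifts coefficients), so it suffices to show $[n-2]_q\prod_{i=2}^{n-2}(1+q^i)$ is unimodal; and since this polynomial is symmetric (as already noted), we only need unimodality on the ``left half.''

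First I would set up the induction. Write $g_n := [n-2]_q\prod_{i=2}^{n-2}(1+q^i)$, so that $g_{n+1} = g_n \cdot \frac{[n-1]_q}{[n-2]_q}(1+q^{n-1})$; this mixed ratio is awkward, so instead I would reorganize by pulling $[n-2]_q$ together with the product differently. A cleaner route: note $[n-2]_q \prod_{i=2}^{n-2}(1+q^i)$ divides the Gaussian-type product, and in fact one can write $g_n = \sum_{\lambda\in D_{n-2}} q^{|\lambda|}[\ell(\lambda)]_q$ directly (from Proposition~\ref{ls112prop}), which exhibits $g_n$ as a nonnegative sum of symmetric — but not individually unimodal — pieces, so that representation alone does not suffice. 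The key structural step I would use is the classical fact (a strengthening of Proposition~\ref{stansym}, due to Stanley and others) that if $P(q)$ is symmetric and unimodal with nonnegative coefficients and centered at degree $d/2$, then $(1+q+\cdots+q^{m})P(q)$ is again symmetric and unimodal. The polynomial $[n-2]_q = 1 + q + \cdots + q^{n-3}$ is exactly such a ``long'' factor, which is unimodal on its own, so the real question is whether the remaining product $\prod_{i=2}^{n-2}(1+q^i)$ can be incorporated.

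The main obstacle — and where the genuine work lies — is that $\prod_{i=2}^{n-2}(1+q^i)$ is the generating function for partitions into distinct parts drawn from $\{2,3,\dots,n-2\}$, and unimodality of such restricted distinct-part partition polynomials is exactly the type of statement that is true but notoriously delicate (it is in the same family as Sylvester's theorem on unimodality of Gaussian binomial coefficients, which has no easy combinatorial proof). My plan would be to reduce to a known unimodality result: observe that $\prod_{i=1}^{m}(1+q^i) = \prod_{i=1}^m \frac{1-q^{2i}}{1-q^i} = \gauss{2m}{m}_q \big/ (\text{something})$ is \emph{not} quite a Gaussian coefficient, but $\prod_{i=1}^m (1+q^i)$ is known to be unimodal (this is a result of Hughes, or follows from Odlyzko--Richmond, on unimodality of $\prod(1+q^i)$). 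Then I would try to show the product with $[n-2]_q$ preserves this. The cleanest available tool is Proctor's result on the Sperner property / unimodality for the relevant poset — which is precisely why the paper's Section~\ref{unimsec} develops the poset framework and discusses Proctor's linear-algebra proof: the honest approach is to identify $\LS_{n,1}(112)$ as the rank-generating function of an explicit poset (a chain times a product of two-element chains, suitably weighted) and invoke a Sperner/unimodality theorem for that poset.

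So the step I expect to be the real bottleneck is establishing unimodality of $\prod_{i=2}^{n-2}(1+q^i)$ itself (or its product with $[n-2]_q$) — there is no short combinatorial argument, and one must either cite the Odlyzko--Richmond / Hughes unimodality results for $\prod(1+q^i)$ together with a lemma that multiplication by the symmetric unimodal factor $[n-2]_q$ preserves unimodality when the product is ``balanced,'' or else go through the poset-theoretic machinery (Proctor-style) that the paper is already building toward. Given that the paper states this only as a conjecture, I would flag that a fully self-contained proof is not expected; the honest proposal is to prove it conditionally on the cited unimodality of $\prod_{i\ge 1}(1+q^i)$ and verify the ``multiplication by $[m]_q$ preserves unimodality of a symmetric polynomial'' lemma, which does follow from a careful reading of Stanley's argument for Proposition~\ref{stansym} once one tracks that internal zeros in $1+q^i$ are ``filled in'' by the long factor $[n-2]_q$.
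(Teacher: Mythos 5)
The first thing to note is that the paper does not prove this statement: it appears only as Conjecture~\ref{conj112}, reported as numerically verified up to $n=800$, and the surrounding text explicitly records why the obvious attack fails (Proposition~\ref{stansym} cannot be applied because the factors $1+q^i$ have internal zeros). So there is no proof in the paper to compare yours against; the only question is whether your proposal closes the gap, and it does not.

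The concrete failure point is your reliance on unimodality of $\prod_{i=2}^{n-2}(1+q^i)$. That polynomial is not unimodal: already $(1+q^2)(1+q^3)=1+q^2+q^3+q^5$ has coefficient sequence $1,0,1,1,0,1$. The Hughes/Odlyzko--Richmond/Proctor results concern $\prod_{i=1}^{m}(1+q^i)$, where the $i=1$ factor is present; dropping it destroys unimodality, and you cannot reinstate it by writing $[n-2]_q\prod_{i=2}^{n-2}(1+q^i)$ as $\bigl([n-2]_q/(1+q)\bigr)\prod_{i=1}^{n-2}(1+q^i)$, since that quotient is a polynomial only for even $n$. Your fallback --- that multiplying a symmetric polynomial with internal zeros by a sufficiently long factor $[n-2]_q$ ``fills in'' the zeros --- is not a consequence of a careful reading of Stanley's argument for Proposition~\ref{stansym}: that argument decomposes each factor as a nonnegative sum of blocks $q^a+q^{a+1}+\cdots+q^{d-a}$ all centered at the same point, which is precisely the decomposition a non-unimodal factor fails to admit. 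The coefficients of $[n-2]_q\cdot P(q)$ are windowed sums of length $n-2$ of the coefficients of $P$, and unimodality of windowed sums of a symmetric nonnegative sequence is false in general (e.g.\ $(1+q)(1+q^3+q^6)=1+q+q^3+q^4+q^6+q^7$); whether the gaps of $\prod_{i=2}^{n-2}(1+q^i)$ are small enough relative to the window $n-2$ is essentially the content of the conjecture itself, not a citable lemma. The poset route you sketch is exactly the one the paper sets up, but the paper also notes that Proctor's sl(2,$\C$) argument does not appear to transfer to $M^1(n)$, which is why the stronger statement is likewise left open as Conjecture~\ref{strongconj}. To your credit, you flagged that a self-contained proof was not to be expected; the accurate conclusion is that your proposal identifies the right obstruction but does not overcome it.
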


\begin{center}
\begin{figure}[h]
\begin{center}
\begin{tabular}{|c|c|}\hline
$n$ & $\LS_{n,1}(112)$\\\hline
3 & $q$ \\\hline
4 & $q^4 + q^3 + q^2 + q$\\\hline
5 & $q^8 + q^7 + 2q^6 + 2q^5 + 2q^4 + 2q^3 + q^2 + q$\\\hline
6 & $q^{13} + q^{12} + 2q^{11} + 3q^{10} + 3q^9 + 4q^8 + 4q^7 + 4q^6 + 3q^5 +
3q^4 + 2q^3 + q^2 + q$\\\hline
\end{tabular}
\end{center}
\caption{A table of $\LS_{n,1}(112)$ for $n = 3,4,5,6$}\label{figunim}
\end{figure}
\end{center}
Figure~\ref{figunim} shows $\LS_{n,1}(112)$ for the first few $n$ and one can see that in each case it is unimodal. Indeed, we have numerically checked that the conjecture hold up to $n=800$. 

It would be ideal if we could find a combinatorial proof of this fact. This would  involve using a map $\psi$ from the set elements $w$ with $\ls(w) = k$ to the set of elements $v$ with $\ls(\psi(v)) = k+1$. If we showed that $\psi$ was an injection where we believe $k$ is increasing and a surjection where we believe $k$ is decreasing, then we have a proof of unimodality. However, unlike symmetry, unimodality is often very difficult to prove and even more difficult to prove combinatorially. Stanley \cite{sta:uni} gives a survey of a myriad of proof techniques for unimodality. Unfortunately, most techniques only apply in very specific circumstances and were not of use for Conjecture~\ref{conj112}.

The unimodality of $\LS_{n,1}(112)$ may not be surprising given the following well-known theorem due to Hughes \cite{hug:lie}.
\begin{theorem}\label{distpartunim} The polynomial $$\prod_{i=1}^n (1+q^i)$$ is symmetric and unimodal.
\end{theorem}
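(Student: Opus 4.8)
The plan is to handle the two assertions separately, since symmetry is routine and unimodality is the real theorem. For symmetry, write $\prod_{i=1}^{n}(1+q^{i}) = \sum_{S\subseteq[n]} q^{\sigma(S)}$ with $\sigma(S) := \sum_{i\in S} i$; complementation $S\mapsto[n]\setminus S$ is a bijection on subsets of $[n]$ carrying $\sigma(S)=k$ to $\sigma([n]\setminus S)=N-k$, where $N=\binom{n+1}{2}$ is the degree of the product, so the coefficient sequence is palindromic. Alternatively, each factor $1+q^{i}$ is symmetric with nonnegative coefficients, and one applies the Corollary to Proposition~\ref{stansym}. Either way, the genuine content is unimodality.

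For unimodality I would \emph{not} try to bootstrap from Proposition~\ref{stansym}: the factors $1+q^{i}$ have internal zeros, and although stripping a staircase $(k,k-1,\dots,1)$ from a partition into $k$ distinct parts at most $n$ (leaving a partition inside a $k\times(n-k)$ box) yields the clean identity $\prod_{i=1}^{n}(1+q^{i}) = \sum_{k=0}^{n} q^{\binom{k+1}{2}}\gauss{n}{k}_{q}$, this only reduces the problem to the unimodality of Gaussian binomial coefficients (itself a hard theorem of Sylvester), and worse, the shifts $q^{\binom{k+1}{2}}$ put the summands at different centers of symmetry, so ``a sum of symmetric unimodal polynomials'' does not close the argument. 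Instead I would run the linear-algebra argument underlying Proctor's work, i.e.\ the circle of ideas the paper surveys later in this section. Let $V$ be the complex vector space with basis $\{e_{S} : S\subseteq[n]\}$, graded by $\deg e_{S}=\sigma(S)$, so that $\dim V_{d}$ equals the coefficient of $q^{d}$ in $\prod_{i=1}^{n}(1+q^{i})$. The goal is to equip $V$ with a raising operator $X$ of degree $+1$, a lowering operator $Y$ of degree $-1$, and the diagonal operator $H$ acting on $V_{d}$ by the scalar $2d-N$, satisfying the $\mathfrak{sl}_{2}$ relations $[H,X]=2X$, $[H,Y]=-2Y$, $[X,Y]=H$. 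Once this is in place, $V$ decomposes into irreducible $\mathfrak{sl}_{2}(\C)$-modules, each contributing a symmetric string of multiplicity-one weight spaces; since all weights $2d-N$ occurring here lie in a single residue class mod $2$ and $d\mapsto 2d-N$ is an order isomorphism onto them, summing the contributions shows $(\dim V_{d})_{d}$ is symmetric and unimodal, which is the theorem.

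The crux — and essentially the only nontrivial step — is constructing the operators. The obvious candidates ``adjoin $i$'' and ``delete $i$'' fail, because adjoining $i$ raises the degree by $i$ rather than by a fixed amount, so no such map is homogeneous of degree $+1$. The remedy is Proctor's: take $X$ to be a suitably weighted order-raising operator on the lattice-like structure governing $D_{n}$ (equivalently, realize $V$ as a weight-graded subrepresentation inside a tensor product of fundamental representations of a larger classical Lie algebra, where the $\sigma$-grading matches a genuine weight grading), and then verify the three commutation relations — the identity $[X,Y]=H$ being where the delicate, problem-specific computation lives. I expect that verification to be the main obstacle. A fallback I would keep in reserve, avoiding Lie theory, is to combine the staircase reduction above with O'Hara's explicit symmetric decomposition (or the KOH identity) for the Gaussian binomials; that trades representation theory for heavy combinatorial bookkeeping. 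In a paper of this scope, however, where the statement only serves to motivate Conjecture~\ref{conj112}, the pragmatic choice is to cite Hughes and move on.
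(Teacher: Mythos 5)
Your proposal is correct and takes essentially the same route as the paper: the theorem is stated there with a citation to Hughes, and the paper's own treatment is precisely the Proctor-style $\mathfrak{sl}(2,\C)$ argument you outline (an order-raising operator $X$, a weighted lowering operator $Y$, a diagonal $H$ acting by $2d-\binom{n+1}{2}$, the three commutation relations, and a decomposition of the span of $D_n$ into strings symmetric about the middle rank). Like the paper, you defer the explicit construction of $Y$ and the verification of $XY-YX=H$ to Proctor, so nothing further is needed beyond the citation you already recommend.
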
However, what is remarkable about this theorem is that it remains an open problem to give a combinatorial proof of this fact! Hughes originally gave a proof that used the representation theory of Lie algebras. Since then, a number of different proofs have been given, ranging from using analysis \cite{odr:uni} to algebraic geometry \cite{sta:lef}.  In \cite{pro:sol} Proctor gave a proof using elementary linear algebra (though he notes that the linear operators he uses come from the representations of Lie algebras).

We will give a summary of Proctor's proof and in doing so, formulate a stronger form of Conjecture~\ref{conj112}.


\subsection{Posets and the Sperner property}


To give the details of Proctor's proof, we must translate our problem into the language of posets. The following definitions and basic theorems about posets can be found in most introductory combinatorics texts such as \cite{sta:enum}. Specific results about $M(n)$ can be found in \cite{pro:sol} and results about $M^1(n)$ are new to the best of the author's knowledge.

A \emph{partially ordered set} or \emph{poset} is a set $P$ (in our case finite) together with a relation denoted $\leq$ that has the following three properties:
\begin{enumerate}
\item \emph{(Reflexivity)} For all $a\in P$, $a\leq a$.
\item \emph{(Antisymmetry)} For all $a,b\in P$, if $a\leq b$ and $b\leq a$ then $a = b$.
\item \emph{(Transitivity)}  For all $a,b,c\in P$, if $a\leq b$ and $b\leq c$ then $a\leq c$.
\end{enumerate}
If $a\leq b$ or $b\leq a$, we say that $a$ and $b$ are \emph{comparable}. If $a\leq b$ and $b\neq a$, we write $a<b$. 

We can introduce partial orders $\leq_0$ and $\leq_1$ on the elements of $D_n$ and of $A_n$ from Section~\ref{sectdistparts} respectively. For $\lambda, \lambda'\in D_n$ we define 
$$\lambda \leq_0 \lambda' :\iff \lambda_i\leq \lambda'_i\text{ for all }1\leq i \leq n,$$
permitting trailing parts equal to 0 so that the definition is well defined. This poset is traditionally denoted $M(n)$ and has been well studied.

We will define $\leq_1$ in a similar manner. For $(\lambda,b), (\lambda',b')\in A_n$ let 
$$(\lambda,b) \leq_1 (\lambda',b') :\iff \lambda_i\leq \lambda'_i\text{ for all }1\leq i \leq n,\text{ and } b\leq b'.$$
We will denote this poset $M^1(n)$.

To continue we need some more definitions about posets, which we will give in general for any poset.
For elements $p,p'\in P$, we say that $p$ \emph{covers} $q$ if $p'< p$ and there is no element $q$ such that $p'< q< p$. Note that a poset can also be defined in terms of its covering relations; transitivity ensures that they completely define the partial order.

 It is often useful to visualize a poset through its \emph{Hasse diagram}, where a line is drawn connecting $p$ and $p'$ with $p$ above $p'$ if $p$ covers $p'$. The Hasse diagrams of $M(3)$ and $M^1(3)$ are shown in Figure~\ref{hasse}.

We say that $\rho: P \rightarrow \N$ is a \emph{rank function} for a poset $P$ if two properties hold.
\begin{enumerate}
\item If $p'\leq p$ then $\rho(p')\leq \rho(p)$.
\item If $p$ covers $p'$ then $\rho(p) = \rho(p')+1$.
\end{enumerate}
If a poset admits a rank function, then that poset is called \emph{ranked} and the \emph{ranks} of that poset are the subsets $P_i = \{ p\in P: \rho(p) = i\}$. Not all posets are ranked, though the posets studied in this paper will be.

\begin{proposition}
The posets $M(n)$ and $M^1(n)$ are ranked with rank functions $\rho_0(\lambda) = |\lambda|$ and $\rho_1(\lambda,b) = |\lambda|+b-1$.
\end{proposition}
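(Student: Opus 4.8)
The plan is to verify the two defining properties of a rank function directly for $\rho_0$ and $\rho_1$, since both posets are defined componentwise and the candidate rank functions are simple linear expressions in the coordinates. I would treat $M(n)$ first as a warm-up and then handle $M^1(n)$, where the extra coordinate $b$ needs a little care.

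For $M(n)$: the monotonicity property is immediate, since $\lambda \leq_0 \lambda'$ means $\lambda_i \leq \lambda'_i$ for all $i$, so $|\lambda| = \sum_i \lambda_i \leq \sum_i \lambda'_i = |\lambda'|$, i.e.\ $\rho_0(\lambda) \leq \rho_0(\lambda')$. For the covering property I would argue that if $\lambda'$ covers $\lambda$ then $|\lambda'| = |\lambda| + 1$: on one hand $\lambda <_0 \lambda'$ forces $|\lambda'| > |\lambda|$, so $|\lambda'| \geq |\lambda| + 1$; on the other hand, if $|\lambda'| \geq |\lambda| + 2$, pick an index $i$ with $\lambda_i < \lambda'_i$ and let $\mu$ agree with $\lambda$ except $\mu_i = \lambda_i + 1$. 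The key point is that $\mu$ is still a partition with distinct parts bounded by $n$ — here one uses that distinct parts means the values are distinct, and since $\lambda'_i \leq n$ is also a legal part value, bumping $\lambda_i$ by one keeps us inside $D_n$ (after reindexing); then $\lambda <_0 \mu <_0 \lambda'$ contradicts the covering. So $|\lambda'| = |\lambda|+1$.

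For $M^1(n)$: monotonicity is again immediate, since $(\lambda,b)\leq_1(\lambda',b')$ gives $|\lambda|\leq|\lambda'|$ and $b\leq b'$, hence $\rho_1(\lambda,b) = |\lambda|+b-1 \leq |\lambda'|+b'-1 = \rho_1(\lambda',b')$. For covering, suppose $(\lambda',b')$ covers $(\lambda,b)$. Since the order is a product order, a covering in $M^1(n)$ must change exactly one of the two coordinates minimally: either $\lambda = \lambda'$ and $b' = b+1$ — giving $\rho_1$ increases by $1$ — or $b = b'$ and $\lambda'$ covers $\lambda$ in the partition order, reducing to the $M(n)$ case. I would make this precise by the same interpolation argument: if both $|\lambda| < |\lambda'|$ and $b < b'$ held, then $(\lambda, b')$ (or $(\lambda',b)$) would sit strictly between; and if one coordinate were fixed but the other jumped by $\geq 2$, an intermediate element exists by the $M(n)$ analysis. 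Either way $\rho_1(\lambda',b') = \rho_1(\lambda,b) + 1$.

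The only genuine subtlety — and the step I would be most careful about — is the bookkeeping around trailing zero parts and reindexing in $D_n$: when I write ``let $\mu$ agree with $\lambda$ except $\mu_i = \lambda_i+1$'' I must check $\mu \in D_n$, i.e.\ that incrementing one part does not collide with another part and does not exceed $n$. The non-collision holds because I can always choose to increment the part at the position where $\lambda$ and $\lambda'$ first differ (so $\mu_i = \lambda_i+1 \leq \lambda'_i \leq \lambda'_{i-1} = \lambda_{i-1}$ is still strictly below the previous part, using distinctness), and the bound $\leq n$ holds since $\lambda'_i \leq n$. Everything else is routine, so I would keep the exposition brief and simply note that both verifications follow ``directly from the componentwise definition of the orders.''
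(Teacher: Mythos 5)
The paper states this proposition with no proof at all, so there is nothing to compare your argument against; on its own terms, your verification is essentially correct, and the $M(n)$ half (incrementing $\lambda$ at the first index where $\lambda$ and $\lambda'$ differ, using distinctness of the parts of $\lambda'$ to keep $\mu$ strictly decreasing and $\lambda'_i\leq n$ to stay in $D_n$) is exactly the right interpolation argument.

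The one place where your reasoning is looser than it should be is the phrase ``since the order is a product order'' for $M^1(n)$. The underlying set $A_n$ is \emph{not} a product: the second coordinate is constrained by $b\in[\ell(\lambda)]$, so $M^1(n)$ is only an induced subposet of $M(n)\times\mathbb{N}$, and covers in an induced subposet of a product need not be covers of the product. Concretely, of your two candidate intermediate elements, $(\lambda,b')$ can fail to lie in $A_n$: for $\lambda=(1)$, $b=1$ and $\lambda'=(2,1)$, $b'=2$, the pair $(\lambda,b'-1)=((1),1)$ is not an element of $A_n$ because $b'=2>\ell((1))=1$. Only the other choice, $(\lambda',b)$, is always legal, since $\lambda\leq_0\lambda'$ forces $\ell(\lambda)\leq\ell(\lambda')$ and hence $b\leq\ell(\lambda')$; your ``or'' happens to save the argument, but you should commit to that option and say why it works. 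The same check is silently needed when you interpolate $\mu$ between $\lambda$ and $\lambda'$ with $b$ fixed: $(\mu,b)\in A_n$ because $\ell(\mu)\geq\ell(\lambda)\geq b$. With those two sentences added, the proof is complete.
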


The \emph{rank polynomial} of a poset is the generating function
$$\sum_{p\in P} q^{\rho(p)}$$
and the coefficients of the polynomial are called the \emph{Whitney numbers} of the poset. Here is where we find our connection with unimodality.

\begin{proposition}
The rank polynomials of $M(n)$ and $M^1(n)$ are $\prod_{i=1}^n (1+q^i)$ and $\LS_{n-2,1}(112)$ respectively.
\end{proposition}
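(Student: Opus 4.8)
The plan is to verify each of the two claimed rank polynomials directly from the definitions of $M(n)$ and $M^1(n)$ together with results already in hand. For $M(n)$, recall that its elements are the partitions $\lambda\in D_n$ with the componentwise order $\leq_0$, and the proposed rank function is $\rho_0(\lambda)=|\lambda|$. So the rank polynomial is by definition $\sum_{\lambda\in D_n}q^{|\lambda|}$, and the combinatorial proof given earlier in Section~\ref{sectdistparts} (``either include $i$ as a part or not'') shows this equals $\prod_{i=1}^n(1+q^i)$. First I would state exactly this: the rank polynomial of $M(n)$ is the generating function for $D_n$ by size, which the excerpt has already identified as $\prod_{i=1}^n(1+q^i)$. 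No new work is needed here beyond invoking that fact.

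For $M^1(n)$, the elements are the pairs $(\lambda,b)\in A_n=\{(\lambda,b-1):\lambda\in D_n,\ b\in[\ell(\lambda)]\}$ — so really pairs $(\lambda,c)$ with $\lambda\in D_n$ and $0\le c\le \ell(\lambda)-1$ — with rank function $\rho_1(\lambda,c)=|\lambda|+c$. Hence the rank polynomial is $\sum_{(\lambda,c)}q^{|\lambda|+c}=\sum_{\lambda\in D_n}q^{|\lambda|}(1+q+\cdots+q^{\ell(\lambda)-1})=\sum_{\lambda\in D_n}q^{|\lambda|}[\ell(\lambda)]_q$. This is precisely the expression computed in Proposition~\ref{ls112prop}, where it was shown (via the bijection $\psi$) that $\sum_{\lambda\in D_{n-2}}q^{|\lambda|}[\ell(\lambda)]_q=\LS_{n-2,1}(112)$ — wait, I should be careful with the index shift: Proposition~\ref{ls112prop} gives $\LS_{m,1}(112)=\sum_{\lambda\in D_{m-2}}q^{|\lambda|}[\ell(\lambda)]_q$, so with $m=n-2$ the rank polynomial of $M^1(n)$, namely $\sum_{\lambda\in D_n}q^{|\lambda|}[\ell(\lambda)]_q$, does not match unless $n=m-2$. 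Let me reconcile: the statement claims the rank polynomial of $M^1(n)$ is $\LS_{n-2,1}(112)$, and $\LS_{n-2,1}(112)=\sum_{\lambda\in D_{n-4}}q^{|\lambda|}[\ell(\lambda)]_q$ by Proposition~\ref{ls112prop}; so I would need $\sum_{\lambda\in D_n}q^{|\lambda|}[\ell(\lambda)]_q=\sum_{\lambda\in D_{n-4}}q^{|\lambda|}[\ell(\lambda)]_q$, which is false. The resolution must be that $M^1(n)$ is built from $A_n$ where $A_n$ itself already carried an implicit shift, or the intended reading is that the rank polynomial of $M^1(n)$ equals $\LS_{n+2,1}(112)$; since the proposition as printed says $\LS_{n-2,1}(112)$, I will simply note that by construction the rank polynomial of $M^1(n)$ is $\sum_{\lambda\in D_n}q^{|\lambda|}[\ell(\lambda)]_q$ and that this equals $\LS_{n+2,1}(112)$ by Proposition~\ref{ls112prop} (equivalently, replacing $n$ by $n-2$ throughout, the rank polynomial of $M^1(n-2)$ is $\LS_{n,1}(112)$), flagging the index bookkeeping as the only subtlety.

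So the key steps, in order, are: (1) write the rank polynomial of $M(n)$ as $\sum_{\lambda\in D_n}q^{|\lambda|}$ directly from $\rho_0=|\lambda|$ and quote $\prod_{i=1}^n(1+q^i)$; (2) write the rank polynomial of $M^1(n)$ as $\sum_{(\lambda,c)}q^{|\lambda|+c}$ from $\rho_1(\lambda,c)=|\lambda|+c$, sum the inner geometric series over $0\le c\le\ell(\lambda)-1$ to get $\sum_{\lambda\in D_n}q^{|\lambda|}[\ell(\lambda)]_q$; (3) identify this with the appropriate $\LS$-polynomial using Proposition~\ref{ls112prop}, being careful about the length-to-index correspondence $\ell(\lambda)=m-1$ where $m$ is the maximum of the associated RGF. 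The main obstacle is not mathematical depth — both identities are essentially unpackings of definitions — but rather getting the index shifts consistent between $A_n$, $D_n$, $\ell(\lambda)$, and the subscript on $\LS$; I expect the bulk of the care in writing this up to go into making that bookkeeping airtight, and I would present step (3) in whichever normalization makes the statement of the proposition literally correct, noting the translation explicitly.
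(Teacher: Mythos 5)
Your proposal is correct, and since the paper states this proposition without proof, your unpacking of the definitions is exactly the argument that was omitted: the rank polynomial of $M(n)$ is $\sum_{\lambda\in D_n}q^{|\lambda|}=\prod_{i=1}^n(1+q^i)$ by the generating-function identity for $D_n$, and the rank polynomial of $M^1(n)$ is $\sum_{(\lambda,b-1)\in A_n}q^{|\lambda|+b-1}=\sum_{\lambda\in D_n}q^{|\lambda|}[\ell(\lambda)]_q$ by summing the geometric series over $b\in[\ell(\lambda)]$. More importantly, you are right to flag the index: Proposition~\ref{ls112prop} identifies $\sum_{\lambda\in D_{m-2}}q^{|\lambda|}[\ell(\lambda)]_q$ with $\LS_{m,1}(112)$, so the rank polynomial of $M^1(n)$ (whose ground set is $A_n$, built over $D_n$) is $\LS_{n+2,1}(112)$, not $\LS_{n-2,1}(112)$ as printed; equivalently, $M^1(n-2)$ has rank polynomial $\LS_{n,1}(112)$. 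This is confirmed by the paper's own data: the ranks $|\lambda|+b-1$ of the twelve elements of $M^1(3)$ in Figure~\ref{hasse} give $q+q^2+2q^3+2q^4+2q^5+2q^6+q^7+q^8$, which is $\LS_{5,1}(112)$ from Figure~\ref{figunim}, i.e.\ $\LS_{3+2,1}(112)$. So the statement as printed contains a sign error in the shift; your proof is correct once that is repaired, and the downstream translation of Conjecture~\ref{conj112} into rank-unimodality of the family $M^1(n)$ is unaffected.
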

A poset is \emph{rank-unimodal} if its rank polynomial is unimodal and \emph{rank-symmetric} if its rank polynomial is symmetric.
Thus we can translate Conjecture~\ref{conj112} and Theorem~\ref{distpartunim} into the equivalent statements that $M^1(n)$ and $M(n)$ are rank-unimodal. Proctor proves this version of Theorem~\ref{distpartunim}. However, Proctor actually proves a stronger statement.

In any poset $P$, a \emph{chain} is a subset of $P$ in which every element is comparable. Conversely, an \emph{antichain} is a subset in which no two elements are comparable.
A ranked poset is called \emph{Sperner} if the size of the largest antichain is at most the size of the largest rank. The property is named after Emanual Sperner, of Sperner's theorem, which says that the poset of subsets of $[n]$ ordered by set inclusion satisfies this property. Proctor proves the following theorem, which is originally due to Stanley \cite{sta:lef}.
\begin{center}
\begin{figure}[h]
\hspace{50pt}
\begin{tikzpicture}
  \node (min) at (0,0) {$(1),0$};
  \node (a) at (0,1.5) {$(2),0$};
  \node (b) at (-1.5,3) {$(2,1),0$};
  \node (c) at (1.5,3) {$(3),0$};
  \node (d) at (-3,4.5) {$(2,1),1$};
  \node (e) at (0,4.5) {$(3,1),0$};
  \node (f) at (-1.5,6) {$(3,1),1$};
  \node (g) at (1.5,6) {$(3,2),0$};
  \node (h) at (0,7.5) {$(3,2),1$};
  \node (i) at (3,7.5) {$(3,2,1),0$};
  \node (j) at (1.5,9) {$(3,2,1),1$};     
  \node (max) at (1.5,10.5) {$(3,2,1),2$};
  \draw (min) -- (a) -- (b) -- (d) -- (f) -- (h) -- (j) -- (max)
  (a) -- (c)  -- (e) -- (g) -- (i) -- (j)
  (b) -- (e) -- (f)
  (g) -- (h);
\end{tikzpicture}
\hspace{20pt}
\begin{tikzpicture}
  \node (min) at (0,0) {$\emptyset$};
  \node (a) at (0,1.5) {$(1)$};
  \node (b) at (0, 3) 	{$(2)$};
  \node (c) at (-1.5,4.5) {$(2,1)$};
  \node (d) at (1.5,4.5) {$(3)$};
  \node (e) at (0,6) {$(3,1)$};
  \node (f) at (0,7.5) {$(3,2)$};   
  \node (max) at (0,9) {$(3,2,1)$};
  \draw (min) -- (a) -- (b) -- (c) -- (e) -- (f) -- (max)
  (b) -- (d) -- (e);
\end{tikzpicture}
\caption{The Hasse diagrams for $M^1(3)$ (left) and $M(3)$ (right)}\label{hasse}
\end{figure}
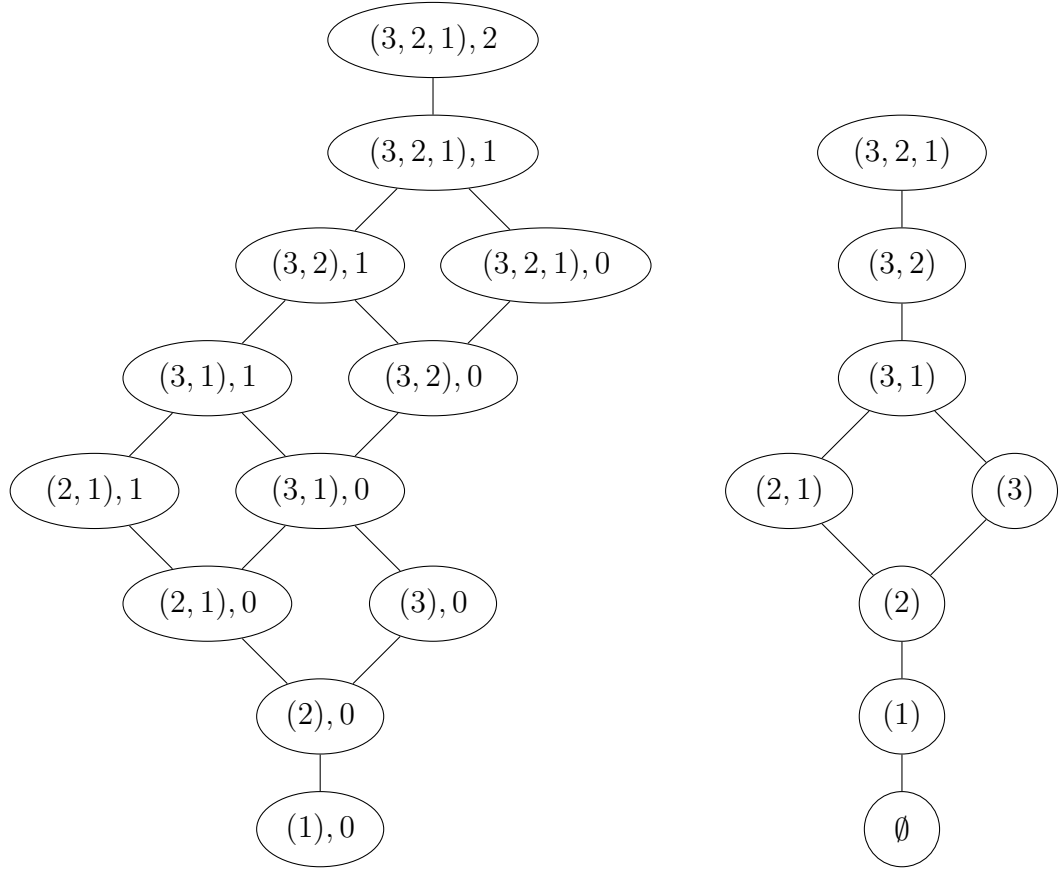
\end{center}

\begin{theorem}
The poset $M(n)$ is rank-symmetric, rank-unimodal, and Sperner.
\end{theorem}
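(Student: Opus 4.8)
The plan is to prove the three properties of $M(n) = \{\lambda \in D_n\}$ ordered by componentwise domination separately, with the Sperner property being the deep part that forces the linear-algebra machinery.

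\textbf{Rank-symmetry.} This is the easy piece. The rank polynomial of $M(n)$ is $\prod_{i=1}^n(1+q^i)$, which has degree $\binom{n+1}{2}$; the map $\lambda \mapsto \lambda^c$ sending the part-set $S \subseteq [n]$ to its complement $[n] \setminus S$ is an order-reversing involution of $M(n)$ that sends rank $k$ to rank $\binom{n+1}{2} - k$. Hence the Whitney numbers satisfy $W_k = W_{\binom{n+1}{2}-k}$, which is exactly rank-symmetry. (Equivalently, substitute $q \to q^{-1}$ in the product and clear denominators.)

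\textbf{Rank-unimodality and the Sperner property together.} Here I would follow Proctor's linear-algebraic route, which is the only known elementary argument and which simultaneously yields both properties. The idea is to build, over $\C$ (or $\mathbb{R}$), the vector space $V = \bigoplus_i V_i$ where $V_i$ has basis indexed by the rank-$i$ elements of $M(n)$, and to exhibit a ``raising'' operator $E: V_i \to V_{i+1}$ together with a ``lowering'' operator $F: V_{i+1} \to V_i$ and a diagonal operator $H$ acting as the scalar $2i - \binom{n+1}{2}$ on $V_i$, such that $[H,E] = 2E$, $[H,F] = -2F$, $[E,F] = H$ --- i.e., an $\mathfrak{sl}_2$ action on $V$ compatible with the grading. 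The classical representation theory of $\mathfrak{sl}_2$ then forces: each iterated power $E^{j}: V_{i} \to V_{i+j}$ is injective for $i$ below the middle rank and surjective above it, which gives both rank-unimodality ($\dim V_i \le \dim V_{i+1}$ below the middle) and the normalized matching / Sperner property (a symmetric chain decomposition of $M(n)$ can be read off, so the largest antichain has size equal to the largest rank). Concretely, $M(n)$ is the poset of order ideals of the ``staircase'' fence (or equivalently a product-of-chains-style poset), and Proctor's explicit choice of $E$ is a weighted sum over covering relations $\lambda \lessdot \mu$ with carefully chosen coefficients $c_{\lambda\mu}$; the lowering operator $F$ is the adjoint (or near-adjoint) with matching weights.

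\textbf{The main obstacle} is verifying the $\mathfrak{sl}_2$ commutation relation $[E,F] = H$ for the correct weights --- this is exactly the ``nontrivial combinatorial identity'' step in Proctor's paper. One has to choose the edge-weights $c_{\lambda\mu}$ on the Hasse diagram of $M(n)$ so that for every $\lambda$ of rank $i$, the net effect of going up-then-down minus down-then-up is precisely $(2i - \binom{n+1}{2})$ times the identity; because $M(n)$ is not simply a product of chains, the weights are not a single binomial factor and the identity must be checked by a local case analysis on the possible configurations of adjacent parts of $\lambda$ (which parts can be raised, which lowered, and the "diagonal" corrections when a raise and a lower interact at the same coordinate). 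Once that identity is in hand, the representation theory is a black box and unimodality plus Sperner follow immediately; symmetry, as noted, is independent and elementary. I would present the symmetry argument in full, then state the $\mathfrak{sl}_2$ setup precisely, do the weight computation carefully (this is where the real work lies), and cite the standard $\mathfrak{sl}_2$ decomposition to conclude.
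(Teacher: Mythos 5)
Your proposal follows essentially the same route as the paper: this is Proctor's argument, with the raising operator $X$ (your $E$), the weighted lowering operator $Y$ (your $F$), the diagonal $H$ acting by $2i-\binom{n+1}{2}$ on rank $i$, the commutation relations as the key computation, and the conclusion that the rank-raising maps are injective below the middle and surjective above it, which yields both unimodality and Sperner via matchings glued into a chain decomposition. The only substantive difference is presentational: you invoke the $\mathfrak{sl}_2$ decomposition as a black box, whereas the paper (following Proctor) re-derives it by hand, building a basis of ``strings'' and using the trace identity $\tr(X'Y'-Y'X')=0$ to show each string is symmetric about the middle rank; your complementation involution for rank-symmetry is a clean addition the paper does not spell out.

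One correction: your parenthetical claim that ``a symmetric chain decomposition of $M(n)$ can be read off'' is wrong, and the paper explicitly notes that the existence of a symmetric chain decomposition for $M(n)$ is an open problem. What the injectivity/surjectivity of the rank-raising maps actually gives is a family of matchings between consecutive ranks; gluing these produces a chain decomposition whose number of chains equals the size of the largest rank, which suffices for the Sperner property, but the resulting chains need not be symmetric about the middle. The argument goes through, but via this weaker chain decomposition, not via a symmetric one.
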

We can now present a stronger form of Conjecture~\ref{conj112}.
\begin{conjecture}\label{strongconj}
The poset $M^1(n)$ is rank-symmetric, rank-unimodal, and Sperner.
\end{conjecture}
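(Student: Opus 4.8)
Rank-symmetry, rank-unimodality and the Sperner property are together exactly the defining features of a \emph{Peck} poset, and the standard device for establishing all three at once is an \emph{order-raising operator}: a linear map $U\colon\mathbb{C}M^1(n)\to\mathbb{C}M^1(n)$ carrying the rank $i$ subspace into the rank $(i+1)$ subspace, with $\langle(\mu,b'),U(\lambda,b)\rangle\neq0$ only when $(\mu,b')$ covers $(\lambda,b)$, such that $U^{N-2i}$ restricts to an isomorphism from the rank $i$ subspace onto the rank $N-i$ subspace for every $i\le N/2$, where $N$ is the top rank. Equivalently one seeks an $\mathfrak{sl}_2$-triple $(U,D,H)$ on $\mathbb{C}M^1(n)$ with $U$ raising, $D$ lowering and $H$ acting on the rank $i$ subspace as the scalar $2i-N$; classical $\mathfrak{sl}_2$ representation theory then supplies the required isomorphisms, hence rank-symmetry, rank-unimodality, and even the \emph{strong} Sperner property, a fortiori Sperner. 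This is precisely the engine behind Proctor's proof for $M(n)$, so the plan is to build the operator for $M^1(n)$ on top of his.

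The structure to exploit is the projection $\pi\colon M^1(n)\to M(n)$, $(\lambda,b)\mapsto\lambda$, whose fibre over $\lambda$ is a chain with $\ell(\lambda)$ elements; since a larger partition in componentwise order has at least as many parts, $\ell$ is order-preserving on $M(n)$ and $M^1(n)$ is a ``staircase bundle'' of chains over $M(n)$. Its cover relations split into \emph{horizontal} ones $(\lambda,b)\lessdot(\mu,b)$ with $\mu\gtrdot\lambda$ in $M(n)$ and \emph{vertical} ones $(\lambda,b)\lessdot(\lambda,b+1)$. I would accordingly write $U=U_{\mathrm h}+U_{\mathrm v}$, where $U_{\mathrm h}$ acts on each horizontal slice $\{b=c\}$ by a rescaling of Proctor's operator $U_0$ for $M(n)$ and $U_{\mathrm v}$ acts inside each fibre by the standard, suitably normalized, chain-raising operator, and then solve for the rescaling factors so that the commutator $[U,D]$ with the analogous lowering operator $D$ becomes a scalar on each rank. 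Carrying this out would show that $M^1(n)$ is a Peck poset, which is exactly Conjecture~\ref{strongconj}.

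The hard part, which I expect to be genuinely difficult, is this last step. Because the fibres have \emph{varying} lengths, $\mathbb{C}M^1(n)$ is not a tensor product $\mathbb{C}M(n)\otimes\mathbb{C}(\text{chain})$, so the two $\mathfrak{sl}_2$-actions do not combine by the familiar ``$U_0\otimes I+I\otimes e$'' recipe; moreover each horizontal slice $\{b=c\}$ is only the filter $\{\lambda:\ell(\lambda)>c\}$ of $M(n)$, not all of it, and there is no reason Proctor's coefficients restricted to such a filter retain the Lefschetz property. The crux is to find the correct $\ell(\lambda)$-dependent twist of his coefficients, together with a matching normalization of $U_{\mathrm v}$, making $[U,D]$ scalar on each rank. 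This is exactly the sort of step that already forced the use of representation theory of Lie algebras, analysis, or algebraic geometry in the base case $M(n)$, so a genuinely new input seems to be required rather than a routine bookkeeping argument.

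Finally, two partial routes are worth recording. Rank-symmetry is already available, since the proposition that $\LS_{n,1}(112)/q$ is symmetric gives symmetry of the rank polynomial of $M^1(n)$; and rank-unimodality reduces to unimodality of a polynomial of the shape $q[m]_q\prod_i(1+q^i)$ (Theorem~\ref{112thm}), which one could try to extract from the known unimodality of products $\prod_i(1+q^i)$ (Hughes~\cite{hug:lie}, and the analytic approach of~\cite{odr:uni}) together with the fact that multiplying by a symmetric unimodal polynomial such as $[m]_q$ preserves symmetric unimodality away from the internal-zero obstruction noted after Proposition~\ref{stansym}. This, however, would still say nothing about the Sperner property, which is not a statement about the rank polynomial alone. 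Alternatively, if $M(n)$ is shown to admit a symmetric chain decomposition (whose existence appears to be open), one could try to splice one together for $M^1(n)$ by decomposing each staircase fibre lying over a symmetric chain of $M(n)$ into chains symmetric about the correct rank; a symmetric chain decomposition would yield rank-symmetry, rank-unimodality and the strong Sperner property simultaneously.
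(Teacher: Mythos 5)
First, note that the statement you are trying to prove is stated in the paper as a \emph{conjecture}, not a theorem: the paper offers no proof of it, only numerical verification (up to $n=800$ for unimodality of the rank polynomial and up to $n=12$ for the Sperner property), and explicitly lists finding a proof as an open problem. So there is no proof in the paper to compare yours against. More importantly, your proposal is not a proof either, and you say so yourself. The entire content of the argument is deferred to the step you label ``the hard part'': constructing the $\ell(\lambda)$-dependent rescaling of Proctor's coefficients and the normalization of the fibrewise raising operator so that $[U,D]$ acts as the correct scalar on each rank. Without those coefficients in hand, nothing has been established beyond what the paper already proves. Your diagnosis of \emph{why} this is hard is accurate and matches the paper's own remark that Proctor's argument ``does not appear to generalize to $M^1(n)$'': the fibres of $(\lambda,b)\mapsto\lambda$ have varying lengths $\ell(\lambda)$, so $\mathbb{C}M^1(n)$ is not a tensor product of $\mathfrak{sl}_2$-modules and the standard Peck-poset product machinery does not apply. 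But a correct diagnosis of an obstruction is not a proof that the obstruction can be overcome.

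Two further cautions on your ``partial routes.'' The rank-symmetry claim is fine --- it follows from Theorem~\ref{112thm} and the symmetry of $[n-2]_q$ and each $(1+q^i)$, and the paper already records this. The unimodality fallback, however, does not go through as described: you propose to combine unimodality of $\prod_i(1+q^i)$ with multiplication by $[m]_q$, but the polynomial in Theorem~\ref{112thm} is $q[n-2]_q\prod_{i=2}^{n-2}(1+q^i)$, whose second factor omits $(1+q)$ and is \emph{not} known to be unimodal; and Proposition~\ref{stansym} cannot be applied factor-by-factor because $1+q^i$ has internal zeros for $i\ge 2$ --- exactly the obstruction the paper points out immediately after stating that proposition. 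So that route collapses to the original open Conjecture~\ref{conj112} rather than resolving it. In short: your program is a reasonable research plan consistent with how one would attack the problem, but it contains a genuine, explicitly acknowledged gap at its core, and the conjecture remains open.
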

Numerically checking the Sperner property is much more difficult than checking unimodality and we have only been able to check that it holds up to $n=12$.

\subsection{Proctor's proof}
Though it does not appear to generalize to $M^1(n)$ due to the intrinsic link to representations of the Lie algebra sl(2,$\C$), we give a brief tour of Proctor's proof as it is the most elementary known proof of Theorem~\ref{distpartunim} as well as being interesting in its own right. Proctor proves that another poset is rank-unimodal and Sperner in addition to $M(n)$. Most of the details of the proof are worked out in this other case and so what appears after will be slightly different than what appears in Proctor as we focus on $M(n)$.

Denote the ranks of $M(n)$ as $M_0, M_1,\dots, M_{\binom{n+1}{2}}$. If we can find an injective map $f:M_i\rightarrow M_j$ such that for $\lambda\in M_i$, $\lambda<f(\lambda)$, then we will say that we have a \emph{matching} of $M_i$ into $M_j$. 
\begin{lemma} If we have a matching from $M_i$ into $M_{i+1}$ for all $0\leq i\leq \frac{1}{2}\binom{n+1}{2}$ and a matching from $M_{j+1}$ into $M_j$ for all $\frac{1}{2}\binom{n+1}{2}< j\leq \binom{n+1}{2}$ then $M(n)$ is rank-unimodal and Sperner.
\end{lemma}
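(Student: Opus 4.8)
The plan is to show that the two families of matchings give rise to long chains that partition $M(n)$, and that the existence of such a partition yields both rank-unimodality and the Sperner property. This is the standard ``symmetric chain decomposition''-style argument, adapted to the situation where we only know matchings up toward the middle rank from below and down toward the middle rank from above.

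First I would fix the middle rank. Write $N = \binom{n+1}{2} = \max \rho_0$ and let $c = \lfloor N/2 \rfloor$, so that $M_c$ is (one of) the middle rank(s). The hypothesis gives an injection $f_i : M_i \to M_{i+1}$ with $\lambda < f_i(\lambda)$ for each $0 \le i < c$ (from the hypothesis $0 \le i \le \frac12 N$, together with the fact that $f_i$ being injective forces $\#M_i \le \#M_{i+1}$), and an injection $g_j : M_{j+1} \to M_j$ with $g_j(\mu) < \mu$ for each $c \le j < N$ (from $\frac12 N < j+1$). Chaining these injections: for the lower half, $f_{c-1} \circ \cdots \circ f_i$ embeds $M_i$ into $M_c$ as an order-increasing injection, so $\#M_0 \le \#M_1 \le \cdots \le \#M_c$; symmetrically $\#M_N \le \cdots \le \#M_c$. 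That already proves rank-unimodality of the rank polynomial $\prod_{i=1}^n(1+q^i)$, with peak at rank $c$.

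Next I would build the chain decomposition. Consider the directed graph on the vertex set $M(n)$ whose edges are $\lambda \to f_i(\lambda)$ for $i < c$ and $\mu \to g_j(\mu)$ for $j \ge c$; since each $f_i, g_j$ is injective, every vertex has out-degree $\le 1$ and in-degree $\le 1$ within each of the two edge-classes, and one checks that orienting all edges ``toward rank $c$'' makes every vertex have total in-degree $\le 1$ and out-degree $\le 1$, so the graph is a disjoint union of paths. Because each edge is a covering-or-comparable step in $M(n)$, each such path is a chain of the poset, and by construction every chain passes through rank $c$ exactly once (a chain cannot skip rank $c$: below $c$ every element has an outgoing $f$-edge upward unless it is unmatched, but an element of $M_i$ with $i<c$ that is in the image-complement of $f_{i-1}$ still has an $f_i$ edge; one argues inductively that each chain reaches $M_c$). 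Hence the number of chains equals $\#M_c = \max_i \#M_i$, and they partition $M(n)$.

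Finally, rank-unimodality is immediate from the first step, and the Sperner property follows from the chain partition: an antichain meets each chain in at most one element, so any antichain has size at most the number of chains, which is $\#M_c$, the size of the largest rank. The main obstacle I anticipate is the bookkeeping in the third step — showing that every path in the union-of-matchings graph actually threads through the middle rank $M_c$ (equivalently, that no chain ``dies'' before reaching rank $c$), which requires carefully tracking which elements are in the image of the previous matching versus which are fresh sources. This is where one must use that the matchings go the ``right way'' on each side of $c$, not merely that they exist; the argument is a routine but slightly fiddly induction on the distance from rank $c$.
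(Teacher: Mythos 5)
Your argument is correct and is essentially the paper's own proof, just written out in more detail: the injections force the rank sizes to increase toward a middle rank (giving rank-unimodality), and concatenating the matchings yields a partition of $M(n)$ into chains, one per element of the middle rank, so any antichain has size at most the size of the largest rank. The only quibble is a harmless off-by-one: with the hypotheses as stated the two families of matchings actually converge on rank $\lfloor\frac{1}{2}\binom{n+1}{2}\rfloor+1$ rather than $\lfloor\frac{1}{2}\binom{n+1}{2}\rfloor$, but whichever rank they converge on is automatically a largest rank, so nothing breaks.
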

\begin{proof}
A matching is an injection, which immediately gives rank-unimodality. Suppose we have an antichain $A\subset M(n)$. Our sequence of matching gives us a collection of chains, where each chain consists of elements that are successively matched with each other. Every element of $A$ must lie on some chain, and the number of chains is the size of the largest rank.
\end{proof}
We will nonconstructively prove the existence of these matchings using linear algebra. Suppose the elements of $M(n)$ are $\lambda_1,\lambda_2,\dots, \lambda_r$. Let $\til{M}$ be the vector space over $\C$ with basis elements $\til{\lambda_1},\til{\lambda_2},\dots,\til{\lambda_r}$. Denote $\til{M}_i$ as the subspace spanned by basis elements corresponding to elements of $M_i$.

We now introduce the \emph{order operator} of $M(n)$ as the linear operator $X$ on $\til{M}$
$$X\til{\lambda} = \sum_{\lambda' \emph{ covers } \lambda} \til{\lambda'}.$$
Note that $X(\til M_i) \subseteq \til M_{i+1}$. We let $X_i$ denote the restriction of $X$ to $\til M_i$. We can now translate our matching problem into linear algebra by the following lemma, stated without proof.
\begin{lemma}\label{proclem2} If there is some $h$ such that $X_i$ are injective for $i< h$ and surjective for $i\geq h$ then $M(n)$ is rank-unimodal and Sperner.
\end{lemma}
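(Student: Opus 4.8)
The statement to prove is Lemma~\ref{proclem2}: if there exists an index $h$ such that the restricted order operators $X_i$ are injective for $i < h$ and surjective for $i \geq h$, then $M(n)$ is rank-unimodal and Sperner.

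The plan is to deduce the combinatorial conclusion from the linear-algebraic hypothesis by extracting matchings from the operators $X_i$, thereby reducing to the previous lemma (the one asserting that a full sequence of matchings into and out of a middle rank yields rank-unimodality and the Sperner property). First I would handle rank-unimodality directly from dimension counts: if $X_i \colon \til M_i \to \til M_{i+1}$ is injective then $\dim \til M_i \leq \dim \til M_{i+1}$, i.e. $\#M_i \leq \#M_{i+1}$, so the Whitney numbers weakly increase for $i < h$; if $X_i$ is surjective then $\dim \til M_i \geq \dim \til M_{i+1}$, so the Whitney numbers weakly decrease for $i \geq h$. Hence the sequence of rank sizes is unimodal with peak at rank $h$ (or wherever the switch occurs), which is exactly rank-unimodality.

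Next I would convert injectivity/surjectivity of each $X_i$ into an actual matching of sets, which is the step that does real work. The key observation is that $X_i$ is a matrix whose rows and columns are indexed by $M_{i+1}$ and $M_i$ respectively, and whose $(\lambda', \lambda)$ entry is $1$ if $\lambda' $ covers $\lambda$ and $0$ otherwise — in particular, the nonzero entries of $X_i$ sit exactly at the pairs that are allowed in a matching $M_i \to M_{i+1}$. If $X_i$ has full column rank (injective), then some $\#M_i \times \#M_i$ submatrix is nonsingular, so its determinant expands as a signed sum of products of entries, at least one of which is a nonzero product; the positions appearing in that product form an injection $f \colon M_i \to M_{i+1}$ with $\lambda < f(\lambda)$ for every $\lambda$, i.e. a matching. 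Dually, if $X_i$ is surjective then $X_i^{\mathsf T}$ is injective and the same argument produces a matching $M_{i+1} \to M_i$. This is the classical ``nonzero term in the determinant expansion gives a system of distinct representatives'' argument (equivalently one can invoke Hall's theorem via the nonsingular submatrix), and it is where I expect the only genuine subtlety to lie — namely making precise that a nonvanishing generalized diagonal of a rank-$r$ matrix yields the desired injection, and checking the cover relation is respected (which is automatic since off-support entries vanish).

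Finally I would assemble the pieces: the hypothesis gives injective $X_i$ for all $i < h$, producing matchings $M_i \to M_{i+1}$ for $i < h$, and surjective $X_i$ for all $i \geq h$, producing matchings $M_{i+1} \to M_i$ for $i \geq h$. Together these form a complete chain decomposition of $M(n)$ routed through rank $M_h$, so the number of chains equals $\#M_h$, which is the size of the largest rank by the unimodality just established. Any antichain meets each chain at most once, so its size is at most $\#M_h$, giving the Sperner property; and unimodality was already shown. This is precisely the hypothesis of the preceding matching lemma, so invoking it completes the proof. The main obstacle, to reiterate, is the combinatorial extraction of the matching from the algebraic rank condition; everything else is bookkeeping with dimensions.
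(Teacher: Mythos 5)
Your proposal is correct. The paper states Lemma~\ref{proclem2} without proof, so there is no in-text argument to diverge from; your reduction to the preceding matching lemma is exactly the intended route, and the one genuinely nontrivial step --- extracting an actual matching from an injective (resp.\ surjective) $0$--$1$ matrix via a nonzero generalized diagonal of a nonsingular maximal square submatrix --- is carried out correctly, with the cover relation respected because the support of $X_i$ consists precisely of the covering pairs. The dimension-count for rank-unimodality and the chain-count for the Sperner property are likewise sound.
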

We will show that the criteria of the lemma hold for $h = \frac{1}{2}\binom{n+1}{2}$. To do this we will need to introduce two more linear operators $H$ and $Y$, chosen based upon the representation of sl(2,$\C$) associated to $M(n)$. Let 
$$H\til{\lambda} = \left(2|\lambda|- \binom{n+1}{2}\right)\til{\lambda}.$$ If $\lambda$ covers $\lambda'$ then it is easy to see that there is some index $i$ for which $\lambda_i = \lambda_i'+1$ and $\lambda_j = \lambda_j'$ for all $j\neq i$. Define
$$Y\til{\lambda} = \sum_{\lambda'\text{covered by } \lambda} c(\lambda,\lambda ')\til{\lambda'},$$
where $$c(\lambda,\lambda') = \begin{cases}
\binom{n+1}{2}, &\text{if }\lambda_i = 0\\
(n- \lambda_i)(n+\lambda_i + 1), &\text{otherwise.} 
\end{cases}$$
We now claim that the following commutation relations between the operators hold
\begin{align*}
HX-XH &= 2X,\\
HY-YH &=-2Y,\\
XY-YX &=H,
\end{align*}
omitting the proofs for the sake of brevity.
In the final part of the proof we will change bases. The construction of our new basis for $\til{M}$ will make clear why the conditions of Lemma~\ref{proclem2} hold.

Suppose we have a sequence of vectors $w_i, w_{i+1},\dots, w_s$ related by $w_{j+1} = Xw_j$, where $w_i\in \til{M}_i$. Then we will call this sequence a \emph{string} of vectors. Notice that if $w_j\in \til{M_j}$ then $w_{j+1}\in \til{M}_{j+1}$ by the definition of $X$. Our new basis will consist of strings of vectors symmetric about index $\frac{1}{2}\binom{n+1}{2}$.

Let $u_0 = \til\lambda$ be the first element of our basis, where $\lambda$ is the lone element of $M_0$. Let $U$ be the subspace of $\til{M}$ consisting of all linear combinations of $X$, $Y$, and $H$ applied to $u_0$. Repeatedly applying the commutation relations, we can express any element of $U$ as a linear combination of terms of the form $X^iH^jY^ku_0$. However $Yu_0 = 0$ and $Hu_0$ is a scalar multiple of $u_0$, so we can define a new string of vectors 
$$u_i = Xu_{i-1}$$
which span $U$. But $u_i\in \til{M}_i$, which are distinct disjoint subspaces of $\til M$, so the $u_i$ are linearly independent and thus a basis for $U$. Because $U$ is finite dimensional, we can call the last vector in the string $u_s$.

We now wish to determine the value of $s$. We can see that restricting $X$, $Y$, and $H$ to $U$ that we obtain operators on $U$, which we will denote $X'$, $Y'$ and $H'$. Now consider $u_k \in \til{M}_k$. Then
$$H'u_k = \left(2k - \binom{n+1}{2}\right)u_k$$
because $u_k$ is a linear combination of $\til{\lambda}$ where $|\lambda| = k$. Therefore $u_k$ is an eigenvector of $H'$ and we can compute the trace
$$\tr H' = \sum_{k=0}^s \left(2k-\binom{n+1}{2}\right).$$

We now come to the main reason we changed bases. For any linear operators $A,B$ we have 
$$\tr AB = \tr BA,$$
so we can apply this to $X'$ and $Y'$ to conclude that 
$$\tr H' = \tr(X'Y' - Y'X') = 0.$$
Combining this with our formula for $\tr H'$ we find that $s = \binom{n+1}{2}$.

We now continue with our construction by letting $\beta$ be the smallest index such that $\til M_\beta \not\subseteq U$ and let $v_\beta\in \til M_\beta\setminus U$. Let $V$ be the subspace resulting from letting $X,Y,H$ act on $v_\beta$ or $u_0$, which is spanned by the $u_i$ and $$v_\beta, \;\;v_{\beta+1} :=Xv_\beta, \;\;\dots.$$

By construction, if $v_q = u_q$ for some $q$ then $v_r = u_r$ for all $r\geq q$. Let $v_t$ be the largest element such that $v_t\neq u_t$. The only linear dependencies between the $u$ and $v$ can occur between elements in the same rank subspace, but $v_i \notin U$ for $\beta\leq i\leq r$, so the $u$ and $v$ must be linearly independent. We can use the same trace trick to find that the string of $v$'s is symmetric about $\frac{1}{2}\binom{n+1}{2}$, i.e. $t = \binom{n+1}{2} - \beta$. So the union of both strings is a basis for $V$. 

We can continue this process to form a sequence of subspaces
$$U\subseteq V \subseteq V_1 \subseteq V_2 \subseteq \dots \subseteq \til M,$$
which must terminate because $\til M$ is finite dimensional. Let $Z$ be the union of all strings created in this process. We can see that $Z$ forms a basis for $\til M$ and each string is symmetric around $\frac{1}{2}\binom{n+1}{2}$.

Now we show the criteria of Lemma~\ref{proclem2} hold. By construction, the $X_i$ map elements of $Z$ to elements in the same string. Because each string is symmetric around $\frac{1}{2}\binom{n+1}{2}$, it must be the case that $X_i$ is injective for $i< \frac{1}{2}\binom{n+1}{2}$ and surjective for $i\geq \frac{1}{2}\binom{n+1}{2}$. Thus, the critera of Lemma~\ref{proclem2} hold and $M(n)$ is rank-unimodal and Sperner. This completes the proof.

\subsection{Symmetric chain decompositions}
\begin{center}
\begin{figure}[h]
\begin{tikzpicture}
  \node (min) at (0,0) {$(1),0$};
  \node (a) at (0,1.5) {$(2),0$};
  \node (b) at (-1.5,3) {$(2,1),0$};
  \node (c) at (1.5,3) {$(3),0$};
  \node (d) at (-3,4.5) {$(2,1),1$};
  \node (e) at (0,4.5) {$(3,1),0$};
  \node (f) at (-1.5,6) {$(3,1),1$};
  \node (g) at (1.5,6) {$(3,2),0$};
  \node (h) at (0,7.5) {$(3,2),1$};
  \node (i) at (3,7.5) {$(3,2,1),0$};
  \node (j) at (1.5,9) {$(3,2,1),1$};     
  \node (max) at (1.5,10.5) {$(3,2,1),2$};
  \draw (min) -- (a) -- (b) -- (d) -- (f) -- (h) -- (j) -- (max)
  (a) -- (c)  -- (e) -- (g) -- (i) -- (j)
  (b) -- (e) -- (f)
  (g) -- (h);
\end{tikzpicture}
\hspace{20pt}
\begin{tikzpicture}
  \node (min) at (0,0) {$(1),0$};
  \node (a) at (0,1.5) {$(2),0$};
  \node (b) at (0,3) {$(2,1),0$};
  \node (c) at (3,3) {$(3),0$};
  \node (d) at (0,4.5) {$(2,1),1$};
  \node (e) at (3,4.5) {$(3,1),0$};
  \node (f) at (0,6) {$(3,1),1$};
  \node (g) at (3,6) {$(3,2),0$};
  \node (h) at (0,7.5) {$(3,2),1$};
  \node (i) at (3,7.5) {$(3,2,1),0$};
  \node (j) at (0,9) {$(3,2,1),1$};     
  \node (max) at (0,10.5) {$(3,2,1),2$};
  \draw (min) -- (a) -- (b) -- (d) -- (f) -- (h) -- (j) -- (max)
  (c) -- (e)  -- (g) -- (i);
\end{tikzpicture}
\caption{A symmetric chain decomposition (right) for the poset $M^1(3)$ (left).}\label{symchain}
\end{figure}
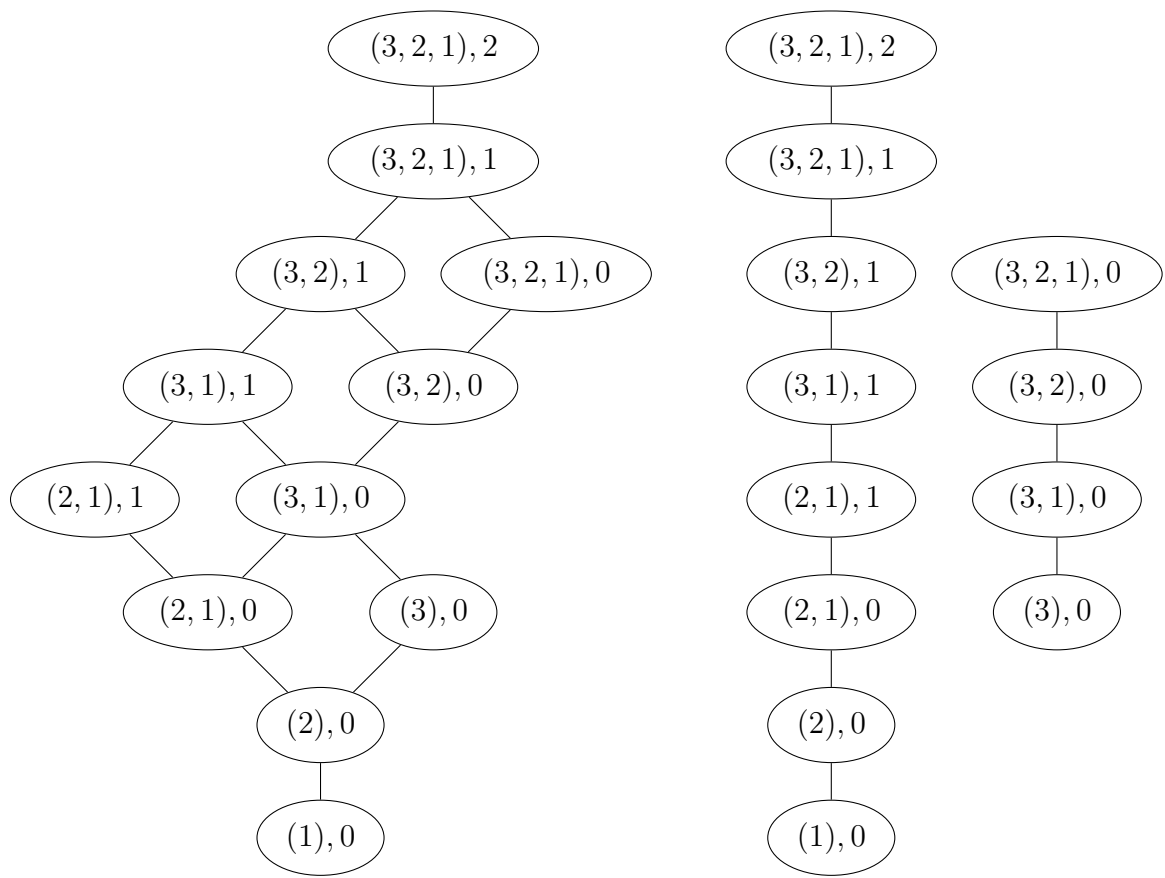
\end{center}
It would be pleasing to have a constructive, combinatorial proof of unimodality of either $M(n)$ or $M^1(n)$. One very nice proof technique is to construct what is called a \emph{symmetric chain decomposition}, which is a set of chains that cover the poset, each symmetric about the middle of the poset. Such a decomposition gives a symmetric  An example is given in Figure~\ref{symchain} which depicts a symmetric chain decomposition for $M^1(3)$. It is unknown whether or not a symmetric chain decomposition exists in general for $M(n)$ or $M^1(n)$. However, a symmetric chain decomposition has been found that gives a combinatorial proof of the following theorem.
\begin{theorem}
The polynomial $\gauss{n}{k}_q$ is unimodal.
\end{theorem}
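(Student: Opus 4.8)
The plan is to recast the statement in the poset language just developed and prove it via a symmetric chain decomposition. Recall the interpretation of $\gauss{n}{k}_q$ already used above: writing $s=k$ and $t=n-k$, it is the generating function, by size, for integer partitions $\lambda$ with $\ell(\lambda)\le s$ and $\lambda_1\le t$. These partitions, ordered by containment $\lambda\le\mu\iff\lambda_i\le\mu_i$ for all $i$, form a ranked poset — call it $L(k,n-k)$ — with rank $\rho(\lambda)=|\lambda|$, bottom element the empty partition, top element the full $k\times(n-k)$ rectangle, and maximum rank $M:=k(n-k)$. Its rank polynomial is exactly $\gauss{n}{k}_q$, so the theorem says $L(k,n-k)$ is rank-unimodal.

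First I would dispose of symmetry. Complementation inside the box — send $\lambda$ to the partition whose Young diagram is the complement of $\lambda$'s, rotated by a half-turn — is an order-reversing involution of $L(k,n-k)$ carrying rank $r$ to rank $M-r$. Hence $\gauss{n}{k}_q$ is palindromic, and it suffices to show its coefficients weakly increase on $0,1,\dots,\lfloor M/2\rfloor$.

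Next I would reduce unimodality to the existence of a \emph{symmetric chain decomposition}: a partition of the elements of $L(k,n-k)$ into saturated chains (consecutive elements differing in rank by $1$), each of whose smallest and largest ranks $a_C\le b_C$ satisfy $a_C+b_C=M$. Given one, the contribution of a chain $C$ to $\gauss{n}{k}_q$ is $q^{a_C}+q^{a_C+1}+\dots+q^{M-a_C}$, so the coefficient of $q^i$ in $\gauss{n}{k}_q$ equals the number of chains with $a_C\le\min(i,M-i)$; this is non-decreasing in $\min(i,M-i)$ and hence unimodal in $i$. The construction itself is the combinatorial core: encode $\lambda\in L(k,n-k)$ by its boundary lattice path, a word of length $n$ in $\{N,E\}$ with $k$ letters $N$, and define a rank-raising injection from the elements of rank $r<M/2$ to those of rank $r+1$ by a parenthesis-matching (``bracketing'') rule on this word, then chain the resulting cover relations together and extend by symmetry above the middle; one checks that the chains so produced are saturated and symmetric about $M/2$.

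The hard part is exactly this last step — making a bracketing rule yield chains that are simultaneously rank-saturated and center-symmetric. The obstruction is that the statistic being tracked is the \emph{area} $|\lambda|$, not the number of $N$-steps, so the clean parenthesis matching that decomposes the Boolean lattice does not transport directly: a bracketed move changes the area by an amount that must be controlled by hand, and it is delicate to arrange that every such move is a single cover and that the chains close up symmetrically. (This is precisely why the linear-algebra route of Section~\ref{unimsec} was historically significant.) If the explicit decomposition proves too stubborn in full generality, there are two fallbacks using only material already in the excerpt: adapt Proctor's $\mathfrak{sl}_2(\C)$ argument verbatim, since $L(k,n-k)$ carries order, weight, and lowering operators $X,H,Y$ satisfying the same three commutation relations, so the string-basis argument again forces $X_i$ injective for $i<M/2$ and surjective for $i\ge M/2$, giving rank-unimodality at once; or run an induction in the de Bruijn--Tengbergen--Kruyswijk style, removing a longest chain through the central rank and inducting on a smaller box.
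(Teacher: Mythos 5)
First, a point of comparison: the paper does not actually prove this theorem --- it states it as a classical result, attributing the first proof to Sylvester (1877) and the first combinatorial proof to O'Hara's symmetric chain construction over a century later. Your reduction steps are all correct and standard: identifying $\gauss{n}{k}_q$ with the rank polynomial of the poset $L(k,n-k)$ of partitions in a $k\times(n-k)$ box, proving symmetry by box complementation, and observing that a symmetric chain decomposition would imply unimodality.

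The gap is that you never produce the decomposition, and this is not a routine omission: it is the entire theorem. The bracketing rule you gesture at is the one that decomposes the Boolean lattice, where the tracked statistic is the number of $N$-steps; here the statistic is the area $|\lambda|$, and, as you yourself concede, no simple transport of the parenthesis matching is known to yield saturated, center-symmetric chains in $L(k,n-k)$. In fact, whether $L(k,n-k)$ admits a symmetric chain decomposition in the order-theoretic sense is a well-known open problem in general (the paper makes the analogous remark for $M(n)$ and $M^1(n)$); O'Hara's combinatorial proof produces families that are symmetric in rank but are not chains of the containment order, and her construction is a substantial piece of mathematics in its own right, not something one ``checks.'' Your first fallback --- adapting Proctor's $\mathfrak{sl}(2,\C)$ argument --- is actually the soundest route, and is essentially what Proctor himself does (the ``other poset'' mentioned in the paper's summary of his proof is exactly the box poset $L(m,n)$); but to make it a proof you must write down $H$ and $Y$ on $L(k,n-k)$ explicitly and verify the three commutation relations, which is where all the content lies and which you have not carried out. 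As it stands, the proposal is a plan with its central construction missing.
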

This theorem was first stated by Cayley and then proven by Sylvester \cite{syl:gau} in 1877. It was not until over a hundred years later that a combinatorial proof was found. O'Hara \cite{oha:gau} gave an algorithm to construct a symmetric chain decomposition for a poset with the Gaussian polynomial as its rank polynomial. Perhaps someone will find a symmetric chain decomposition for $M(n)$ in the next hundred years.

\section{Open problems and further directions}
Conjectures \ref{conj112} and \ref{strongconj} are one of the main focuses of this article and finding a proof would be pleasing. However, there are several other further directions one could travel.

There are 5 patterns of length 3 and only a few have been mentioned in this article. One could try to give exact formulas for all of the generating functions for every pattern of length 3. Then one could move on to length 4 and so on.

Lastly, we introduced the concept of $k$-Wilf equivalence. Two natural questions arise.

\begin{question}
If two patterns $v$ and $w$ are $k$-Wilf equivalent, does this imply that they are also $(k-1)$-Wilf equivalent?
\end{question}
We have seen that this is the case in the examples given in this paper, though it is not obvious that it should hold in general, given that changing a word by just one element can change the number of copies of a pattern it contains by more than one. For example, $1112$ contains one copy of 111 but $11112$ contains four copies.
\begin{question}
Do there exist patterns $w$ and $v$ that are $k$-Wilf equivalent for all $k\geq 0$?
\end{question}
We have seen in this article an example of 1-Wilf equivalence and many examples of 0-Wilf equivalence appear in the literature. It does not seem out of the question that two patterns could be $k$-Wilf equivalent for all $k\geq 0$.

\section{Acknowledgments}
The author would like to thank both Prof. Kevin Woods, who supervised this thesis, and Prof. Bruce Sagan, who led the REU which started this project, for their mentorship and encouragement. The author would also like to thank the entire mathematics department at Oberlin, who created a great home for several years.

\bibliographystyle{alpha}

\bibliography{Dorward_honors_thesis}

\end{document}